\newtheorem{thm}{Theorem}
\newtheorem{lem}[thm]{Lemma}
\newtheorem{prop}[thm]{Proposition}
\newtheorem{cor}[thm]{Corollary}
\newtheorem{rem}[thm]{Remark}
\newtheorem{conj}[thm]{Conjecture}
\numberwithin{thm}{section}
\newtheorem{dfn}{Definition}
\numberwithin{dfn}{section}
\newtheorem{assum}{Assumption}
\title{A reverse Faber-Krahn inequality for the magnetic Laplacian}
\author{Bruno Colbois}
\author{Corentin Léna}
\author{Luigi Provenzano}
\author{Alessandro Savo}
\address{Bruno Colbois, Université de Neuch\^atel, Institute de Mathémathiques, Rue Emile Argand 11, 2000 Neuch\^atel, Switzerland}
\email{bruno.colbois@unine.ch}
\address{Corentin Léna, Università degli Studi di Padova, Dipartimento di Tecnica e Gestione dei Sistemi Industriali (DTG), Stradella S. Nicola 3, 36100 Vicenza, Italy\\
\newline
\indent Università degli Studi di Padova, Dipartimento di Matematica ``Tullio Levi-Civita'', via Trieste 63, 35121 Padova, Italy}
\email{corentin.lena@unipd.it}
\address{Luigi Provenzano, Sapienza Università  di Roma, Dipartimento di Scienze di Base e Applicate per l'Ingegneria (SBAI), Via Antonio Scarpa 16, 00161 Roma, Italy}
\email{luigi.provenzano@uniroma1.it}
\address{Alessandro Savo, Sapienza Università  di Roma, Dipartimento di Scienze di Base e Applicate per l'Ingegneria (SBAI), Via Antonio Scarpa 16, 00161 Roma, Italy}
\email{alessandro.savo@uniroma1.it}
\thanks{The first author acknowledges support of the SNSF project ‘‘\emph{Geometric Spectral Theory}’’, grant number 200020\_212570. The second author acknowledges support from the INdAM GNAMPA Project  ‘‘\emph{Operatori differenziali e integrali in geometria spettrale}’’ (CUP E53C22001930001). The third author acknowledges support of the project "Perturbation problems and asymptotics for elliptic differential equations: variational and potential theoretic methods" funded by the European Union -- Next Generation EU and by MUR-PRIN-2022SENJZ3. The third and fourth author acknowledge support from the INdAM GNSAGA Project ‘‘\emph{Analisi Geometrica: Equazioni alle Derivate Parziali e Teoria delle Sottovarietà}’’ (CUP E53C22001930001). The authors wish to thank the anonymous referees for suggesting several corrections and improvements.}
\keywords{Magnetic Laplacian, constant field, Neumann eigenvalues, method of level lines, isoperimetric inequalities, reverse Faber-Krahn inequality}
\subjclass[2020]{35P15, 35J25, 49Q10, 81Q10}
\date{September 14, 2024}
\begin{document}

\begin{abstract}
We consider the first eigenvalue of the magnetic Laplacian in a bounded and simply connected planar domain, with uniform magnetic field and Neumann boundary conditions. We investigate the reverse Faber-Krahn inequality conjectured by S.~Fournais and B.~Helffer, stating that this eigenvalue is maximized by the disk for a given area. Using the method of level lines, we prove the conjecture for small enough values of the magnetic field (those for which the corresponding eigenfunction in the disk is radial). 
\end{abstract}

\maketitle

\section{Introduction}

\subsection{Magnetic Laplacians}
\label{subsecMagnLap}

The present work investigates a conjectured \emph{reverse Faber-Krahn inequality} for the Laplacian with constant magnetic field (Conjecture \ref{conjRFK}), and establishes it for a small enough value of the field (Theorem \ref{thmRFK}).  In order to fully understand the problem, let us start from a more general point of view. We consider a vector potential, that is, a real-valued vector field $A:\mathbb R^2\to\mathbb R^2$ of class $C^\infty$, written
\begin{equation*}
	A(p):=	\left(\begin{array}{c}
					A_1(p)\\
					A_2(p)
			\end{array}
	\right)
\end{equation*} 
(where $p=(x,y)$ denotes a point in $\mathbb R^2$).
We associate with $A$ a magnetic gradient
\begin{equation*}
  \nabla^{A}u:=\nabla u-i\,u\,A,
\end{equation*}
a magnetic divergence 
\begin{equation*} 
	\mbox{div}_{A} X:=\mbox{div}\, X-i\,A\cdot X 
\end{equation*}
and a magnetic Laplacian
\begin{equation}\label{eqMagLap} 
	\Delta_{A}u:=-\mbox{div}_{A} \left(\nabla^{A}u\right)=\Delta u +2i\,A\cdot\nabla  u+(|A|^2+i\,\mbox{div}(A))u.
\end{equation}
Let us stress that we are using the following definition, common in differential geometry, for  the Laplace operator:
\[\Delta u:=-{\rm div}\,\nabla u=-\partial_{xx}^2u-\partial_{yy}^2u.\]
For all the vector potentials $A$ considered in the paper, $\mbox{div}(A)=0$. We define the magnetic field associated with $A$  as the function 
\begin{equation*}
	\beta(p):=\mbox{curl}\,A(p)= \partial_x A_2(p)-\partial_yA_1(p).
\end{equation*}

The differential operator $\Delta_{A}$ is formally symmetric and uniformly elliptic, with smooth coefficients. Given an open and bounded set $\Omega\subset\mathbb R^2$ with a sufficiently smooth boundary (we assume that $\partial \Omega$ is $C^\infty$ smooth throughout the paper),  we can consider several self-adjoint realizations of $\Delta_{A}$ as operators on $L^2(\Omega)$ and study their (discrete) spectrum. We are mainly interested in the Neumann realization $\Delta_{A}^N$, whose spectrum $(\lambda_k^N(\Omega,A))_{k\ge 1}$ consists of the eigenvalues (counted with repetitions) of the problem 
\begin{equation}\label{eqEVNeuman}
\left\{\begin{array}{rll}
		\Delta_{A} u&=\lambda u&\mbox{ in }\Omega,\\
		\nabla^{A}u \cdot \nu&=0&\mbox{ on }\partial\Omega,
		\end{array}
\right.
\end{equation} 
where $\nu$ denotes the outward pointing normal vector. We have $\lambda_k^N(\Omega,A)\ge 0$ for all $k $ and $\lambda_k^N(\Omega,A)\to +\infty$ as $k\to\infty$. Another operator of interest is the Dirichlet realization, denoted by $\Delta^D_A$, whose spectrum $(\lambda_k^D(\Omega,A))_{k\ge1}$ consists of the eigenvalues of the problem 
\begin{equation}
\label{eqEVDirichlet}
\left\{\begin{array}{rll}
		\Delta_{A} u&=\lambda u&\mbox{ in }\Omega,\\
		u&=0&\mbox{ on }\partial\Omega,
		\end{array}
\right.
\end{equation}
which are also non-negative and tend to $+\infty$. Let us note that $\Delta_A^N$ and $\Delta_A^D$ can be constructed using a vector potential only defined on $\overline{\Omega}$, that is to say assuming only $A\in C^\infty(\overline{\Omega},\mathbb R^2)$.

For completeness, let us give a rigorous definition of $\Delta_A^N$. We define a closed and non-negative quadratic form, with domain $H^1(\Omega)$, by
\[a(u):=\int_\Omega \left|\nabla^A u\right|^2\,dx.\]
According to standard results in spectral theory, there exists a unique self-adjoint operator $T$ whose domain $\mathcal D(T)$ is contained in $H^1(\Omega)$ and such that, for all $u\in\mathcal D(T)$ and $v\in H^1(\Omega)$, 
\[\langle Tu,v\rangle=b(u,v),\]
where $\langle\cdot,\cdot\rangle$ denotes the scalar product in $L^2(\Omega)$ and $b(\cdot,\cdot)$ the sesquilinear form associated with $a(\cdot)$. We then define $\Delta_\Omega^N$ to be this operator $T$. We proceed in the same way, starting with the form domain $H^1_0(\Omega)$, to define $\Delta_A^D$.

These operators enjoy a so-called \emph{gauge invariance} property, which can be formulated as follows. Given a smooth function $f:\overline{\Omega}\to \mathbb R$, we define the modified vector potential $\hat A:=A+\nabla f$ and consider the associated magnetic Laplacian $\Delta_{\hat A}$. We have $\Delta_{\hat A}(e^{i f}u)=e^{i f}\Delta_{A}u$ for every smooth function $u$. This property of the differential operator implies that the self-adjoint operator $\Delta_{\hat A}^N$ is isospectral to $\Delta_{A}^N$, so that $\lambda^N_k(\Omega,\hat A)=\lambda_k^N(\Omega,A)$ for every $k$. A similar result holds for the Dirichlet realization.

If $\Omega$ is simply connected, the above gauge-invariance property implies that the spectrum $(\lambda^N_k(\Omega,A))_{k\ge1}$ only depends on the magnetic field $\beta(p)$. Indeed, let us assume that $\tilde A$ is another vector potential such that, for all $p\in \Omega$, 
\[\mbox{curl}\,\tilde A(p)= \beta(p)=\mbox{curl}\, A(p).\]
Then, according to Poincaré's lemma, there exists a smooth function $f:\overline{\Omega}\to\mathbb R$ such that $\tilde A=A+\nabla f$. It then follows from gauge invariance that $\lambda^N_k(\Omega,\tilde A)=\lambda_k^N(\Omega,A)$ for all $k$, as claimed. The same holds true for the Dirichlet eigenvalues.

\subsection{The case of a constant magnetic field}

We now restrict our attention to particular classes of vector potentials. We first consider
\[A=\beta\,A_S,\]
where $\beta\in\mathbb R$ is a constant and $A_S$ denotes the standard vector potential
 \begin{equation*}
	A_S(x,y):=	\frac12\left(\begin{array}{c}
					-y\\
					x
			\end{array}
	\right).
\end{equation*}
We have, for all $p\in\mathbb R^2$,
\[\mbox{curl}\,(\beta A_S)(p)=\beta,\]
meaning that the magnetic field associated with $\beta\,A_S$ has the constant value $\beta$. We write $(\lambda_k(\Omega,\beta))_{k\ge1}$ for the eigenvalues of $\Delta_{\beta A_S}^N$ (that is, the eigenvalues of Problem \eqref{eqEVNeuman} with $A=\beta A_S$). Similarly, we write 
$\lambda^D_k(\Omega,\beta)$ for $\lambda^D_k(\Omega,\beta A_S)$. As seen immediately from Formula \eqref{eqMagLap}, for any smooth complex-valued function $u$, $\Delta_{-\beta A_S}\overline{u} =\overline{\Delta_{\beta A_S}u}$, where the bar denotes complex conjugation. This implies $\lambda_k^N(\Omega,-\beta)=\lambda_k^N(\Omega,\beta)$ for all $k$ and $\beta$. We can therefore assume, without loss of generality, that $\beta\ge0$. 

Next, we consider vector potentials constructed using the so-called \emph{torsion function} $\psi$ of the domain $\Omega$. Among several competing conventions, we choose the following: we define $\psi$ as the unique solution of the boundary value problem
\begin{equation}\label{eqBVPTorsion}
\left\{\begin{array}{rcl}
		\Delta\psi&=1&\mbox{ in }\Omega,\\
		\psi&=0&\mbox{ on }\partial\Omega.
		\end{array}
\right.
\end{equation} 
We then define the vector potential $A_\Omega:\overline\Omega\to\mathbb{R}^2$ by 
\begin{equation}\label{eqVectPot}
	A_\Omega(p):=-\nabla^\perp\psi(p),
\end{equation}
or more explicitly
\begin{equation*}
	\left(\begin{array}{c}
					A_{\Omega,1}(p)\\
					A_{\Omega,2}(p)
			\end{array}
	\right):=
	\left(\begin{array}{c}
					\partial_y\psi(p)\\
					-\partial_x\psi(p)
			\end{array}
	\right).
\end{equation*}
By construction, we have
\begin{equation}\label{eqCurl}
	\mbox{curl }A_\Omega(p):=\partial_x A_{\Omega,2}(p)-\partial_y A_{\Omega,1}(p)=\Delta \psi=1.
\end{equation}
In the case where $\Omega$ is simply connected, it follows from our previous discussion of gauge-invariance that $\lambda^N_k(\Omega,\beta A_\Omega)=\lambda^N_k(\Omega,\beta)$ for all $k$ and $\beta$.

For a complete exposition of the material in this 
section and the previous one, we refer the interested reader to the book \cite{FournaisHelffer}, by S. Fournais and B. Helffer. These authors also develop the asymptotic analysis for large $\beta$ and apply it to the study of the Ginsburg-Landau functional in superconductivity.

\subsection{Constant magnetic field in a disk}\label{subsecDisk}

The special case where $\Omega$ is $B_R$, the disk of radius $R>0$ centered at $0$ in $\mathbb R^2$, and where $A=\beta A_S$ (with $\beta\ge0$), will be particularly useful. Let us briefly review the relevant results, referring to \cite[Appendix B]{CLPS2023Magnetic} for more detail. We can solve the eigenvalue problem \eqref{eqEVNeuman} by using the polar coordinates $(x,y)=(r\cos(\theta),r\sin(\theta))$ and looking for eigenfunctions of the form $v(r)e^{in\theta}$, with $n\in \mathbb Z$. We find that the spectrum of $\Delta_{\beta A_S}^N$ is the reunion, counted with multiplicities, of the spectra $(\kappa_k(n,\beta,R))_{k\ge1}$ of the following family of Sturm-Liouville problems, indexed by $n$:
\begin{equation}\label{eqSLPolar}
	\left\{\begin{array}{rll}
		-v''-\frac1rv'+\left(\frac{\beta r}{2}-\frac{n}{r}\right)^2v=\kappa v&\mbox{ in }]0,R[,\\
		\lim_{r\to0}r\,v'(r)=0,\\
		v'(R)=0.& 
		\end{array}
\right.
\end{equation}
In particular, for every $R>0$ and $\beta\ge0$,
\begin{equation}
	\lambda_1^N(B_R,\beta)=\inf_{n\in\mathbb Z}\kappa_1(n,\beta,R).
\end{equation}

To state and prove our main theorem, we need more precise information in the case of a small magnetic field. We consider the unit disk $B_1$. The following result is proved in Appendix \ref{appBetaStar}. It has also been obtained by A.~Kachmar and V.~Lotoreichik (see \cite[Proposition 3.3]{KL2022Magnetic}). 

\begin{prop}\label{propBetaStar} 
 For all $\beta\in[0,1]$,
 \begin{equation*}
 	\lambda_1^N(B_1,\beta)=\kappa_1(0,\beta,1).
 \end{equation*}
\end{prop}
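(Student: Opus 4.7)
The plan is to exploit the separation of variables decomposition recalled in the excerpt: the spectrum of $\Delta^N_{\beta A_S}$ on $B_1$ is the (multiset) union over $n \in \mathbb{Z}$ of the spectra of the Sturm-Liouville problems \eqref{eqSLPolar}, so
$\lambda_1^N(B_1,\beta) = \inf_{n\in\mathbb{Z}} \kappa_1(n,\beta,1)$.
Hence the proposition reduces to showing that, when $\beta \in [0,1]$, the minimum is attained at $n=0$, i.e.\ that $\kappa_1(n,\beta,1) \geq \kappa_1(0,\beta,1)$ for every $n \in \mathbb{Z} \setminus \{0\}$.

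The key ingredient is a pointwise comparison between the effective radial potentials appearing in \eqref{eqSLPolar}. Expanding,
$$\left(\frac{\beta r}{2} - \frac{n}{r}\right)^2 - \left(\frac{\beta r}{2}\right)^2 = \frac{n^2}{r^2} - \beta n,$$
which, for $r \in (0,1]$ and $n \neq 0$, is bounded below by $n^2 - \beta n = n(n-\beta)$. For $\beta \in [0,1]$ this quantity is non-negative for every $n \in \mathbb{Z} \setminus \{0\}$: strictly positive for $n \leq -1$ since $n^2 \geq 1$ and $-\beta n \geq 0$, and non-negative for $n \geq 1$ since $n - \beta \geq 1-\beta \geq 0$. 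This is precisely where the hypothesis $\beta \leq 1$ enters, and checking it is the essential obstacle—modest as it is, this inequality is what drives the whole argument, and it is quite sharp: as $\beta$ grows past $1$, the term $-\beta n$ can no longer be absorbed by $n^2/r^2$ at $r = 1$, and one would expect a different $n$ to take over as the minimizer.

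To turn this pointwise bound into a comparison of eigenvalues, I would use the variational characterization
$$\kappa_1(n,\beta,1) \;=\; \inf_{v \neq 0} \frac{\displaystyle\int_0^1 \Bigl[|v'(r)|^2 + V_n(r)\,|v(r)|^2\Bigr] r\, dr}{\displaystyle\int_0^1 |v(r)|^2\, r\, dr},$$
where $V_n(r) := (\beta r/2 - n/r)^2$ and the infimum is taken over the natural weighted Sobolev space compatible with the boundary conditions in \eqref{eqSLPolar}. For $n \neq 0$, the singular term $n^2/r$ in the integrand $V_n(r)\,r$ forces any admissible $v$ to vanish at $r=0$ (in a suitable weighted sense), so the admissible class for $n \neq 0$ is contained in that for $n = 0$. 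Combined with the pointwise inequality $V_n \geq V_0$ on $(0,1]$ established above, a standard min–max comparison then yields $\kappa_1(n,\beta,1) \geq \kappa_1(0,\beta,1)$ for every $n \neq 0$, and the proposition follows. The only piece requiring genuine care beyond the pointwise potential estimate is the proper setup of the weighted function spaces, which is standard.
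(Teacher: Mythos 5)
Your proposal is correct and essentially coincides with the paper's own argument in Appendix~\ref{appBetaStar}: both use the decomposition $\lambda_1^N(B_1,\beta)=\inf_n\kappa_1(n,\beta,1)$, the inclusion of the form domain for $n\neq0$ into that for $n=0$, and the pointwise potential comparison $\bigl(\tfrac{\beta r}{2}-\tfrac{n}{r}\bigr)^2-\bigl(\tfrac{\beta r}{2}\bigr)^2=\tfrac{n^2}{r^2}-\beta n\geq n(n-\beta)\geq0$ for $|n|\geq1$, $r\in(0,1]$, $\beta\leq1$, followed by Rayleigh's principle. The only difference is cosmetic (the paper phrases the bound via $|n|(|n|-\beta)$).
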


\begin{dfn}\label{dfnBetaStar} We define the constant $\beta^*$ as the largest $\beta_0>0$ such that 
\[\lambda_1^N(B_1,\beta)=\kappa_1(0,\beta,1)\mbox{ for all }\beta\in[0,\beta_0].\]
Let us note that, according to Proposition \ref{propBetaStar}, $\beta^*$ exists and $\beta^*\ge1$.
\end{dfn}

\begin{rem}\label{remBetaStar}
Problem \eqref{eqSLPolar} can be studied by expressing solutions of the differential equation using generalized Laguerre functions and by computing the eigenvalues using numerical root finding (see \cite[Appendix B]{CLPS2023Magnetic}). It appears that 
\begin{enumerate}
	\item $\beta^*\approx 3.84754$;
	\item $\lambda_1^N(B_1,\beta)<\kappa_1(0,\beta,1)$ for all $\beta>\beta^*$.
\end{enumerate}
This is consistent with the earlier results of D. Saint-James \cite{SJ1965}, who performed a similar analysis, motivated by the study of superconductivity. It would be interesting to have precise numerical bounds for $\beta^*$ and a rigorous
proof of point (2).
\end{rem}

From the scaling properties of magnetic eigenvalues, we have 
\[\lambda_1^N(B_R,\beta)=R^{-2}\lambda_1^N(B_1,R^2\beta).\]
It then follows from Definition \ref{dfnBetaStar} that $\lambda_1^N(B_R,\beta)$ has an associated eigenfunction which is radial and real-valued if $R^2\beta\in[0,\beta^*]$.

Finally, let us point out that the eigenvalue problem \eqref{eqSLPolar} can be given an alternative form using the change of variable $a=\pi r^2$, with $f(a)=v(r)$. In the case $n=0$, we obtain
\begin{equation}\label{eqSLArea}
	\left\{\begin{array}{rll}
		-(4\pi\,a f')'+\frac{\beta^2 a}{4\pi}f=\kappa f&\mbox{ in }]0,a^*[,\\
		\lim_{a\to0}a\,f'(a)=0,&\\
		f'(a^*)=0,& 
		\end{array}
\right.
\end{equation}
where $a^*=\pi R^2=|B_R|$. According to the definition of $\beta^*$, the first eigenvalue $\kappa_1(0,\beta,R)$ of Problem \eqref{eqSLArea} coincides with $\lambda_1^N(B_R,\beta)$ if $\beta a^*\le \beta^*\pi$.

\subsection{Review of eigenvalue bounds}

In the previous paper \cite{CLPS2023Magnetic}, we provided upper and lower bounds for the magnetic Neumann eigenvalues $\lambda_k^N(\Omega,\beta)$, involving geometric parameters of $\Omega$. Although we devoted particular attention to $\lambda_1^N(\Omega,\beta)$, many of the results in \cite{CLPS2023Magnetic} apply to higher eigenvalues. In the present work, we are only concerned with comparing the first magnetic Neumann eigenvalue on a domain $\Omega$ with the corresponding one on the disk having the same area. Throughout the paper, we denote by $\Omega^*$ the disk in $\mathbb R^2$ centered at the origin such that $|\Omega^*|=|\Omega|$.

In the Dirichlet case (Problem \eqref{eqEVDirichlet}), L. Erd{\H o}s extended the classical Rayleigh-Faber-Krahn inequality to the magnetic Laplacian \cite{Erdos1996Inequality}.
\begin{thm}[Erd{\H o}s, 1996] \label{thmDirichlet} Let $\Omega\subset\mathbb R^2$ be an open and bounded set with a $C^\infty$ smooth boundary. Then, for all $\beta\ge0$,
\begin{equation*}
	\lambda_1^D(\Omega,\beta)\ge\lambda_1^D(\Omega^*,\beta).
\end{equation*}
Equality occurs if, and only if, $\Omega$ is a disk.
\end{thm}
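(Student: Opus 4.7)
The plan is to combine the diamagnetic inequality with a Schwarz-type rearrangement adapted to the magnetic setting. Let $u$ be a first Dirichlet eigenfunction of $\Delta_{\beta A_S}^D$ on $\Omega$, normalized so that $\int_\Omega|u|^2=1$, and on the open set where $|u|>0$ write $u=\rho\, e^{i\phi}$ with $\rho=|u|$. A direct computation then yields the pointwise identity
\[
|\nabla^A u|^2 = |\nabla \rho|^2 + \rho^2\, |\nabla \phi - \beta A_S|^2,
\]
which splits the magnetic Dirichlet energy into a \emph{modulus} part and a \emph{phase} part, to be handled separately. The real-valued field $\phi$ is only defined modulo $2\pi$; this ambiguity is harmless pointwise but crucial globally.

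For the modulus part, I would take the radially decreasing Schwarz rearrangement $\rho^*$ of $\rho$ on $\Omega^*$. Equimeasurability preserves the $L^2$-norm while the classical P\'olya--Szeg\H{o} inequality gives $\int_{\Omega^*}|\nabla \rho^*|^2 \leq \int_\Omega|\nabla \rho|^2$. For the phase part, I would use the coarea formula to rewrite the integral as an integral over level curves $\gamma_t := \{\rho = t\}$, which are smooth Jordan curves for almost every $t$ (Sard). Letting $a(t)$ denote the area of the superlevel set $\{\rho>t\}$, Stokes' theorem applied there yields
\[
\oint_{\gamma_t} \beta A_S \cdot \tau \, ds = \beta\, a(t), \qquad \oint_{\gamma_t} \nabla\phi \cdot \tau \, ds \in 2\pi\,\mathbb{Z},
\]
since $\phi$ is single-valued modulo $2\pi$. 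Combining the Cauchy--Schwarz estimate
\[
\left|\oint_{\gamma_t}(\nabla\phi - \beta A_S)\cdot\tau\, ds\right|^2 \le |\gamma_t|\int_{\gamma_t}|\nabla\phi - \beta A_S|^2\, ds
\]
with the classical isoperimetric inequality $|\gamma_t|^2 \ge 4\pi\, a(t)$ transforms this global topological defect into a pointwise-in-$t$ lower bound on the phase integrand that depends only on $a(t)$ and $\beta$ (after minimizing over the integer realizing the smallest circulation).

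Reassembling the two contributions and re-expressing everything in terms of $\rho^*$, one expects to obtain an inequality of the form
\[
\int_\Omega |\nabla^A u|^2 \;\ge\; \int_{\Omega^*} |\nabla \rho^*|^2 + \int_{\Omega^*} V_\beta(|x|)\,(\rho^*)^2,
\]
where $V_\beta$ is an effective radial potential, morally $\beta^2|x|^2/4$, adjusted by the integer that minimizes the circulation defect. Since $\rho^*\in H^1_0(\Omega^*)$ is a legitimate real test function with unit $L^2$-norm, the right-hand side is a Rayleigh quotient for a radial Sturm--Liouville problem of the type discussed in Section \ref{subsecDisk} (the Dirichlet analogue of \eqref{eqSLPolar}), whose infimum, after taking the minimum over the integer $n$, equals $\lambda_1^D(\Omega^*,\beta)$.

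The hard part is clearly the phase step: turning the global Stokes/quantization constraint on $\gamma_t$ into an integral lower bound that can be combined coherently with the Schwarz rearrangement of the modulus. This is precisely where the constancy of the field is essential, since it makes the flux through $\{\rho>t\}$ depend only on $a(t)$; a non-constant field would produce a rearrangement-incompatible flux. Finally, the equality case follows by tracking equality in P\'olya--Szeg\H{o} (forcing $\rho=\rho^*$ up to translation) and in the isoperimetric inequality along almost every level set (forcing these level sets to be concentric disks), which together force $\Omega$ itself to be a disk.
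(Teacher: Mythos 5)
First, note that the paper itself does not prove Theorem~\ref{thmDirichlet}; it is stated as background and attributed to Erd{\H o}s \cite{Erdos1996Inequality}. So you are being compared against Erd\H os's argument, which is indeed in the spirit you describe (polar decomposition $u=\rho e^{i\phi}$, splitting the energy into a modulus part and a phase part, rearrangement of $\rho$, and the flux/circulation constraint on level curves). But your sketch, as written, has a genuine gap precisely at the ``reassembling'' step, and in fact two of the estimates you invoke point in the wrong direction.

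The core difficulty: once you pass to level sets via the coarea formula, the phase energy becomes
\begin{equation*}
\int_\Omega \rho^2\,|\nabla\phi-\beta A_S|^2\,dx
=\int_0^{\max\rho} t^2\!\left(\int_{\gamma_t}\frac{|\nabla\phi-\beta A_S|^2}{|\nabla\rho|}\,ds\right)dt,
\end{equation*}
with an unavoidable $1/|\nabla\rho|$ weight, while your Cauchy--Schwarz estimate bounds the unweighted quantity $\int_{\gamma_t}|\nabla\phi-\beta A_S|^2\,ds$. To match the coarea integrand you must instead use the weighted Cauchy--Schwarz
\begin{equation*}
\left(\oint_{\gamma_t}(\nabla\phi-\beta A_S)\cdot\tau\,ds\right)^2
\le\left(\int_{\gamma_t}|\nabla\rho|\,ds\right)\left(\int_{\gamma_t}\frac{|\nabla\phi-\beta A_S|^2}{|\nabla\rho|}\,ds\right),
\end{equation*}
which yields a lower bound on the phase integrand \emph{only if} you control $\int_{\gamma_t}|\nabla\rho|\,ds$ \emph{from above}. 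But the P\'olya--Szeg\H{o} argument for the modulus part rests on the reverse estimate, $\int_{\gamma_t}|\nabla\rho|\,ds\ge |\gamma_t|^2/(-a'(t))\ge 4\pi a(t)/(-a'(t))$: it controls the same quantity \emph{from below}. The two halves of your plan demand opposite-direction bounds on $\int_{\gamma_t}|\nabla\rho|\,ds$ and therefore cannot be estimated independently and then ``reassembled.'' Relatedly, even with your unweighted Cauchy--Schwarz, the isoperimetric inequality $|\gamma_t|\ge\sqrt{4\pi a(t)}$ works \emph{against} you: since $|\gamma_t|$ sits in the denominator of the resulting lower bound $\tfrac{|2\pi n-\beta a(t)|^2}{|\gamma_t|}$, you would need an \emph{upper} bound on $|\gamma_t|$ to get a useful inequality, not a lower one. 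The decomposition and the ingredients you name are the right ones, but the actual proof has to minimize the full level-set integrand
$\int_{\gamma_t}\bigl(|\nabla\rho|+t^2|\nabla\phi-\beta A_S|^2/|\nabla\rho|\bigr)ds$
\emph{jointly} under all the constraints (area, $-a'$, circulation, isoperimetric), and the analysis of that constrained variational problem --- in particular showing the circle is extremal despite the competing effects --- is exactly the hard part of Erd\H os's proof that your outline passes over.
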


The two-dimensional Rayleigh-Faber-Krahn inequality is a special case ($\beta=0$) of the previous theorem. It is not immediately clear what the analogous result should be in the Neumann case. The Laplacian without magnetic field ($\beta=0$) offers no guidance since its first eigenvalue is $0$ for every domain $\Omega$. The question was studied by Fournais and Helffer, who combined results from asymptotic analysis (as exposed in \cite{FournaisHelffer}) with isoperimetric-type inequalities to find several upper and lower bounds \cite{FH2019Magnetic}. In particular, they obtained the following result.

\begin{thm}[Fournais and Helffer, 2019]\label{thmAsympt} If $\Omega$ is an open, bounded and simply connected set with a $C^\infty$ smooth boundary, and if $\Omega$ is not a disk, then there exist $\beta_\ell(\Omega)>0$ and $\beta_u(\Omega)>0$ such that the inequality 
\begin{equation*}
	\lambda_1^N(\Omega,\beta)<\lambda_1^N(\Omega^*,\beta)
\end{equation*}  
holds for all $0<\beta<\beta_\ell(\Omega)$ and all $\beta>\beta_u(\Omega)$.
\end{thm}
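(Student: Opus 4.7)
The plan splits naturally by the size of $\beta$, with perturbation theory around $\beta=0$ on one side and semiclassical asymptotics as $\beta\to\infty$ on the other.

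\textbf{Small-$\beta$ regime.} I would work in the torsion gauge $A=\beta A_\Omega=-\beta\nabla^\perp\psi_\Omega$ (permissible by the gauge invariance of Section \ref{subsecMagnLap}, since $\Omega$ is simply connected), and use the constant function $u\equiv 1$ in the Rayleigh quotient. Because $\nabla^{\beta A_\Omega}1=-i\beta A_\Omega$ and $|A_\Omega|^2=|\nabla\psi_\Omega|^2$, this produces the global upper bound
\[
\lambda_1^N(\Omega,\beta)\le\frac{\beta^2}{|\Omega|}\int_\Omega|\nabla\psi_\Omega|^2\,dp=\frac{\beta^2\, T(\Omega)}{|\Omega|}\quad\text{for all }\beta\ge 0,
\]
where $T(\Omega)=\int_\Omega\psi_\Omega\,dp$ is the torsional rigidity of $\Omega$. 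On the disk $\Omega^*$, a direct expansion of the radial Sturm-Liouville problem \eqref{eqSLArea} around $\beta=0$ (the $n=0$ branch, which is indeed the first one for $\beta$ small enough by Proposition \ref{propBetaStar}) gives the matching asymptotic $\lambda_1^N(\Omega^*,\beta)=\beta^2\, T(\Omega^*)/|\Omega^*|+O(\beta^4)$. Subtracting and invoking Saint-Venant's inequality $T(\Omega)<T(\Omega^*)$---strict because $\Omega$ is not a disk and $|\Omega|=|\Omega^*|$---yields
\[
\lambda_1^N(\Omega^*,\beta)-\lambda_1^N(\Omega,\beta)\ge\frac{\beta^2}{|\Omega|}\bigl(T(\Omega^*)-T(\Omega)\bigr)-O(\beta^4),
\]
which is strictly positive on some interval $(0,\beta_\ell(\Omega))$.

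\textbf{Large-$\beta$ regime.} Here I would invoke the two-term semiclassical expansion of Helffer-Morame/Fournais-Helffer (see \cite{FournaisHelffer}): for every smooth bounded $\Omega$,
\[
\lambda_1^N(\Omega,\beta)=\Theta_0\,\beta-C_1\,\kappa_{\max}(\partial\Omega)\,\beta^{1/2}+o(\beta^{1/2})\quad\text{as }\beta\to\infty,
\]
where $\Theta_0\in(0,1)$ is the de Gennes constant and $C_1>0$ is a positive universal constant. For the disk $\Omega^*=B_R$ with $R=\sqrt{|\Omega|/\pi}$, the curvature on $\partial\Omega^*$ is constantly equal to $1/R$, so the comparison reduces to the sharp geometric inequality $\kappa_{\max}(\partial\Omega)\ge 1/R$, with equality only when $\Omega$ is a disk. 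This follows from an inscribed-disk argument: if $\rho$ denotes the inradius of $\Omega$, then at any contact point of the largest inscribed disk with $\partial\Omega$ the outer curvature must dominate that of the inscribed disk, giving $\kappa_{\max}(\partial\Omega)\ge 1/\rho$; combined with $\pi\rho^2\le|\Omega|=\pi R^2$ this yields $\kappa_{\max}(\partial\Omega)\ge 1/R$, and equality forces $\pi\rho^2=|\Omega|$, i.e.\ $\Omega$ itself is a disk of radius $R$. Plugging the resulting strict inequality back into the asymptotic closes the argument for $\beta>\beta_u(\Omega)$.

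\textbf{Main obstacle.} The small-$\beta$ half is essentially soft once the torsion function and Saint-Venant's inequality are at hand. The real analytical difficulty sits in the large-$\beta$ two-term asymptotic, which rests on a semiclassical reduction to an effective one-dimensional operator along $\partial\Omega$, Agmon decay estimates normal to the boundary, and a matching quasi-mode concentrated at a point of maximum curvature. Provided this expansion is imported as a black box from \cite{FournaisHelffer}, the only remaining geometric work is the inscribed-disk inequality above; one simply needs to check that the domain-dependent $o(\beta^{1/2})$ remainder is dominated by the strictly positive gap $C_1(\kappa_{\max}(\partial\Omega)-1/R)\,\beta^{1/2}$, which it is as soon as $\beta$ exceeds a threshold $\beta_u(\Omega)$ depending only on $\Omega$.
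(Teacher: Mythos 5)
The paper states this theorem but does not prove it; it is imported from \cite{FH2019Magnetic}. My comparison is therefore with the standard arguments rather than with a proof given in this manuscript. Your overall decomposition into a small-$\beta$ perturbative regime (torsion gauge, constant trial function, Saint-Venant inequality) and a large-$\beta$ semiclassical regime (Helffer--Morame two-term expansion, maximal curvature) is indeed the structure underlying the original proof, and the paper's own emphasis on the vector potential $A_\Omega$ supports the first half.

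The small-$\beta$ half is correct as written. With $u\equiv1$ in the gauge $\beta A_\Omega=-\beta\nabla^\perp\psi$, one has $\int_\Omega|\nabla\psi|^2=\int_\Omega\psi\,\Delta\psi=T(\Omega)$ (recall $\Delta\psi=1$, $\psi|_{\partial\Omega}=0$), so the Rayleigh quotient bound $\lambda_1^N(\Omega,\beta)\le\beta^2T(\Omega)/|\Omega|$ holds for all $\beta$. The second-order expansion of the radial problem \eqref{eqSLArea} gives $\lambda_1^N(\Omega^*,\beta)=\beta^2T(\Omega^*)/|\Omega^*|+O(\beta^4)$, and the strict Saint-Venant inequality $T(\Omega)<T(\Omega^*)$ for non-disk $\Omega$ with $|\Omega|=|\Omega^*|$ closes the gap on an interval $(0,\beta_\ell(\Omega))$.

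The large-$\beta$ half has the right framework but the geometric lemma is justified incorrectly, and the error is substantive. You need $\kappa_{\max}(\partial\Omega)>1/R$ for non-disk $\Omega$ with $|\Omega|=\pi R^2$, but your inscribed-disk comparison points the wrong way. At a point where the maximal inscribed disk (radius $\rho$) touches $\partial\Omega$, the disk lies \emph{inside} $\Omega$, so the boundary of $\Omega$ curves \emph{less} sharply than the inscribed circle: in local coordinates with the common tangent along the $x$-axis and $\Omega$ in $\{y>0\}$, the circle is $y=x^2/(2\rho)+\cdots$, the boundary is $y=\kappa x^2/2+\cdots$, and the inclusion of the disk in $\Omega$ forces $\kappa\le1/\rho$ at the contact point, not $\kappa\ge1/\rho$. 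Thus the chain ``$\kappa_{\max}\ge1/\rho\ge1/R$'' is not established by the contact-point argument (and switching to the circumscribed disk does not help, since one only gets $\kappa_{\max}\ge1/\mathrm{circ}(\Omega)\le1/R$, which is the wrong direction). The inequality $\kappa_{\max}\ge1/\rho$ is nevertheless true: it is precisely the Pestov--Ionin theorem (Blaschke's rolling theorem in the convex case), which states that if a simple closed $C^2$ curve has curvature everywhere $\le\kappa_0$, then the enclosed region contains a disk of radius $1/\kappa_0$. This gives $\pi/\kappa_{\max}^2\le|\Omega|=\pi R^2$, hence $\kappa_{\max}\ge1/R$, with equality only for the disk. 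Substituting this correct justification makes your large-$\beta$ argument sound; as written, however, the key geometric step is false.
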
 
This suggests the  following reverse Faber-Krahn inequality. 
\begin{conj}[Fournais and Helffer, 2019] \label{conjRFK} Let $\Omega\subset\mathbb R^2$ be an open, bounded and simply connected set with a $C^\infty$ smooth boundary. Then, for all $\beta>0$, 
\begin{equation*}
	\lambda_1^N(\Omega,\beta)\le\lambda_1^N(\Omega^*,\beta),
\end{equation*}
with equality if, and only if, $\Omega$ is a disk.
\end{conj}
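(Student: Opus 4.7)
By the variational characterization $\lambda_1^N(\Omega,\beta)=\inf_{u\neq 0}\int_\Omega|\nabla^{\beta A_\Omega}u|^2/\int_\Omega|u|^2$, the plan is to exhibit, for each $\Omega\neq\Omega^*$, an explicit trial function $u$ on $\Omega$ whose Rayleigh quotient is strictly less than $\lambda_1^N(\Omega^*,\beta)$. Using gauge invariance, I would work with the Coulomb-gauge potential $A_\Omega=-\nabla^\perp\psi_\Omega$ built from the torsion function, and attempt the method of level lines: transplant the radial first eigenfunction of $\Omega^*$ to $\Omega$ along the super-level sets of $\psi_\Omega$, parameterized by area.

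I would first restrict to the regime the paper can realistically treat, namely $\beta|\Omega|\leq\beta^*\pi$. By the scaling form of Proposition \ref{propBetaStar}, the first eigenfunction on $\Omega^*$ is then radial, of the form $v(r)=f(\pi r^2)$, where $f\colon[0,|\Omega|]\to\mathbb{R}$ is the positive first eigenfunction of the one-dimensional problem \eqref{eqSLArea}. Set $s(p):=|\{q\in\Omega:\psi_\Omega(q)>\psi_\Omega(p)\}|$, which equidistributes on $[0,|\Omega|]$, and take the real trial $u(p):=f(s(p))$. Because $u$ is real the magnetic cross term $2\beta\,A_\Omega\cdot\mbox{Im}(\bar u\,\nabla u)$ vanishes, and after applying the coarea formula together with the identity $\int_{\{\psi_\Omega=t\}}|\nabla\psi_\Omega|\,d\sigma=a_\Omega(t)$ (obtained by integrating $\Delta\psi_\Omega=1$ over super-level sets) and the change of variable $a=a_\Omega(t)$, the Rayleigh quotient of $u$ becomes
\[
\mathcal{R}_\Omega(u)=\frac{\int_0^{|\Omega|}p(a)\,a\,f'(a)^2\,da+\beta^2\int_0^{|\Omega|}a\,f(a)^2/p(a)\,da}{\int_0^{|\Omega|}f(a)^2\,da},\qquad p(a):=|a_\Omega'(t_\Omega(a))|.
\]
The same expression with $p\equiv 4\pi$ is exactly $\lambda_1^N(\Omega^*,\beta)$, by \eqref{eqSLArea}. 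Cauchy-Schwarz combined with the classical isoperimetric inequality $L(t)^2\geq 4\pi\,a_\Omega(t)$, applied to the super-level sets of $\psi_\Omega$, yields $p(a)\geq 4\pi$, with strict inequality on a set of positive measure unless $\Omega$ is a disk.

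The main obstacle is that $p(a)\geq 4\pi$ affects the two terms of $\mathcal{R}_\Omega(u)$ in opposite directions: it inflates the kinetic contribution $\int p\,a\,f'^2\,da$ relative to its disk counterpart while simultaneously deflating the magnetic contribution $\beta^2\int a\,f^2/p\,da$. A termwise or pointwise comparison therefore fails, and the proof must extract the correct sign from an integrated identity. My strategy is to multiply the Sturm-Liouville equation \eqref{eqSLArea} by a weight of the form $(p(a)-4\pi)f(a)$ and integrate by parts, using the positivity of $f$ on $[0,|\Omega|]$ (guaranteed in the radial regime $\beta|\Omega|\leq\beta^*\pi$) to rewrite the excess kinetic energy as a quantity dominated by the gain in the magnetic energy, plus a boundary/variation term in $p$ controllable via elementary monotonicity of $a_\Omega$. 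This integrated cancellation of ``wrong-direction'' inequalities is the central technical difficulty I anticipate.

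Rigidity in the equality case follows by tracing back the isoperimetric rigidity: $p(a)\equiv 4\pi$ forces every super-level set of $\psi_\Omega$ to be a (concentric) disk, which together with $\Delta\psi_\Omega=1$ and $\psi_\Omega|_{\partial\Omega}=0$ forces $\Omega$ itself to be a disk. Finally, the radiality hypothesis on $f$ is essential: once $\beta>\beta^*\pi/|\Omega|$, the first eigenfunction on $\Omega^*$ acquires nontrivial angular dependence, the scalar ansatz $u=f(s)$ is no longer asymptotically sharp, and one would presumably need a complex trial function carrying an angular phase matched to the higher Fourier modes of $\beta A_\Omega$---this is why Conjecture \ref{conjRFK} in its full generality remains open.
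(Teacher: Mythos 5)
Your set-up is the same as the paper's: pass to the torsion potential $A_\Omega=-\nabla^\perp\psi_\Omega$, restrict to trial functions constant on the level lines of $\psi_\Omega$, reparametrize by the area $a$ of the super-level sets, and reduce the Rayleigh quotient to a one-dimensional integral with a weight $G_\Omega(a)\ge 4\pi$ coming from Cauchy--Schwarz and the isoperimetric inequality. That much is exactly Section 2 of the paper, and your identification of the dichotomy between the kinetic and the potential term is exactly the difficulty the paper has to circumvent.

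The gap is in the resolution you propose. You freeze the trial function as the first eigenfunction $f_0$ of the disk problem \eqref{eqSLArea} and try to prove
\begin{equation*}
\int_0^{a^*}(G_\Omega(a)-4\pi)\left(a\,|f_0'(a)|^2-\frac{\beta^2a}{4\pi\,G_\Omega(a)}\,|f_0(a)|^2\right)da\le 0
\end{equation*}
by an integrated identity. This inequality is \emph{false} in general. The Riccati-type bound one can extract from the disk equation (see Propositions \ref{propBounds}, \ref{propRiccati} in the paper specialized to $G\equiv 4\pi$) reads $|f_0'(a)|^2<\frac{\beta^2}{(4\pi)^2}|f_0(a)|^2$, whereas the pointwise version of what you need is $|f_0'(a)|^2\le\frac{\beta^2}{4\pi\,G_\Omega(a)}|f_0(a)|^2$; since $G_\Omega(a)\ge 4\pi$, the latter is strictly \emph{stronger}, so the available estimate goes in the wrong direction. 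More decisively, the kinetic excess $\int a(G_\Omega-4\pi)|f_0'|^2\,da$ is unbounded in $G_\Omega$ while the magnetic gain $\frac{\beta^2}{4\pi}\int a\frac{G_\Omega-4\pi}{G_\Omega}|f_0|^2\,da$ is bounded by $\frac{\beta^2}{4\pi}\int a|f_0|^2\,da$; the weight $G_\Omega$ is typically unbounded (it blows up near critical values of $\psi_\Omega$), so your numerator can be made strictly positive and the Rayleigh quotient of the transplanted $f_0$ can exceed $\lambda_1^N(\Omega^*,\beta)$. The integration-by-parts you sketch would also introduce a term in $G_\Omega'$, which is neither controllable nor even well-defined (the weight is only measurable, and the paper has to handle $G\notin L^\infty$ by a separate truncation argument in Section \ref{subsecUnbounded}).

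What the paper does instead is not to transplant the disk eigenfunction, but to minimize over the full class $\mathcal F_\Omega$ of torsion-type profiles, obtaining $\lambda_1^N(\Omega,\beta)\le\kappa_1(G_\Omega,\beta)$ where $\kappa_1(G,\beta)$ is the first eigenvalue of the weighted Sturm--Liouville problem \eqref{eqSLIntro}. The comparison $\kappa_1(G_\Omega,\beta)\le\kappa_1(4\pi,\beta)$ is then proved as a monotonicity statement in $G$ (Proposition \ref{propMonotonicity}): one interpolates $G_z=(1-z)4\pi+zG_\Omega$, computes $\kappa'(z)$ by a Feynman--Hellmann formula (Proposition \ref{propDerivative}), and shows $\kappa'(z)<0$ by a Riccati-equation analysis (Propositions \ref{propBounds}--\ref{propRiccati}) applied to the eigenfunction $f_z$ \emph{of the deformed problem at parameter $z$}, not the disk eigenfunction $f_0$. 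The key sign condition $|aG_z f_z'|^2<\beta^2 a^2 f_z^2$ holds precisely because $f_z$ equilibrates with $G_z$; it would fail for the fixed $f_0$. In short, the correct fix to the difficulty you identified is to let the one-dimensional profile adapt to $G_\Omega$ along a homotopy, rather than to keep it pinned at the disk eigenfunction and hope for an integrated cancellation.
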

Equally importantly for our purpose, Fournais and Helffer emphasized the usefulness of choosing $A_\Omega$ as a vector potential for studying this problem.

While we were completing the present paper, we learned of the recent work of Kachmar and Lotoreichik \cite{KL2024Magnetic}, who also tackled Conjecture \ref{conjRFK} and proved an inequality of the form (using our notation) 
\begin{equation}\label{eqKL1}
\lambda_1^N(\Omega,\beta)\leq C(\Omega)\lambda_1^N(B_{\rho(\Omega)},\beta),
\end{equation}
 which holds for $\beta|\Omega|\le\beta^*\pi$ and for all convex planar domains with a $C^\infty$ smooth boundary. The radius $\rho(\Omega)$ is defined by the property that the torsion functions of $\Omega$ and $B_{\rho(\Omega)}$ have the same maximum. It follows from the Talenti Comparison Principle that $B_{\rho(\Omega)}\subset\Omega^*$. The authors then prove that, assuming $\beta>0$, the function $R\mapsto \lambda_1^N(B_R,\beta)$ is strictly increasing for $R$ small enough and they deduce the inequality
 \begin{equation}\label{eqKL2}
 \lambda_1^N(\Omega,\beta)\leq C(\Omega)\lambda_1^N(\Omega^*,\beta),
 \end{equation}
with equality only in the case of disk. The constant $C(\Omega)$ depends quite explicitly on $\Omega$, and can be estimated in some cases. For example, the authors are able to show that $C(\Omega)=1$ if $\Omega$ is an ellipse. Inequality \eqref{eqKL2} then validates Conjecture \ref{conjRFK} for  $\beta|\Omega|\le\beta^*\pi$ and Inequality \eqref{eqKL1} gives a quantitative version of it. However, they also prove that $C(\Omega)\geq 1$ and find domains $\Omega$ for which $C(\Omega)>1$, showing that  Inequality \eqref{eqKL2} is in general weaker than the conjectured one.
 
The authors' proof of Inequality \eqref{eqKL1} has points in common with our methods. It consists in using a suitable test function which is constant on the level lines of the torsion function on $\Omega$, along with estimates on the area and the perimeter of the super-level sets of the torsion function. 

\subsection{Main result and outline of the paper}
\label{secOutline}

We give a partial confirmation of Conjecture \ref{conjRFK}, for the magnetic field strength $\beta$ below a certain threshold. Let us point out that, contrary to Theorem \ref{thmAsympt}, our threshold for $\beta$ only depends on the area of $|\Omega|$ and on the constant $\beta^*$ introduced in Definition \ref{dfnBetaStar}. 

\begin{thm} \label{thmRFK} Let $\Omega\subset\mathbb R^2$ be an open, bounded and simply connected set with a $C^\infty$ smooth boundary. Assume that $\beta>0$ is such that $\beta|\Omega|\le \beta^*\pi$. Then 
\begin{equation}\label{eqRFK}
	\lambda_1^N(\Omega,\beta)\le\lambda_1^N(\Omega^*,\beta),
\end{equation}
and equality occurs if, and only if, $\Omega$ is a disk.
\end{thm}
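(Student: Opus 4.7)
The plan is to use the method of level lines in the torsion gauge $A_\Omega = -\nabla^\perp \psi$, where $\psi$ solves \eqref{eqBVPTorsion}. Gauge invariance (applicable since $\Omega$ is simply connected) gives $\lambda_1^N(\Omega,\beta) = \lambda_1^N(\Omega, \beta A_\Omega)$. The key feature of this choice is that for a real-valued function $G$, the test function $u = G(\psi)$ satisfies
\[|\nabla^{\beta A_\Omega} u|^2 = (G'(\psi)^2 + \beta^2 G(\psi)^2)\,|\nabla\psi|^2,\]
since the real part $G'(\psi)\nabla\psi$ and the imaginary part $\beta G(\psi)\nabla^\perp\psi$ of $\nabla^{\beta A_\Omega} u$ are orthogonal in $\mathbb{R}^2$.

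Under the hypothesis $\beta|\Omega| \le \beta^*\pi$, Definition \ref{dfnBetaStar} together with the scaling identity ensures that the first eigenfunction on $\Omega^*$ is the radial function $p \mapsto f^*(\pi|p|^2)$, where $f^*$ is the first eigenfunction of the Sturm–Liouville problem \eqref{eqSLArea} on $[0,|\Omega|]$. I would take as a test function on $\Omega$
\[u(p) := f^*\bigl(a(\psi(p))\bigr), \qquad a(t) := \bigl|\{\psi > t\}\bigr|,\]
which reduces on $\Omega^*$ to the true first eigenfunction. The coarea formula together with the divergence-theorem identity $\int_{\{\psi=t\}}|\nabla\psi|\,d\mathcal{H}^1 = a(t)$ (a consequence of $\Delta\psi = 1$), and the change of variable $s = a(t)$, give
\[\mathcal{R}[u] = \frac{\int_0^{|\Omega|} |a'(t(s))|\,s\,f^{*\prime}(s)^2\,ds + \beta^2\int_0^{|\Omega|} |a'(t(s))|^{-1}\,s\,f^*(s)^2\,ds}{\int_0^{|\Omega|} f^*(s)^2\,ds}.\]
On $\Omega^*$, where $|a'|\equiv 4\pi$, this evaluates to $\kappa^* := \lambda_1^N(\Omega^*,\beta)$. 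For a general $\Omega$, combining Cauchy–Schwarz with the classical isoperimetric inequality on each level set of $\psi$ yields the standard bound $|a'(t)|\ge 4\pi$.

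Subtracting $\kappa^*$ and using the identity for $f^*$ from \eqref{eqSLArea} rearranges to
\[\mathcal{R}[u] - \kappa^* = \frac{1}{\int f^{*2}\,ds}\int_0^{|\Omega|} s\,\bigl(|a'(t(s))|-4\pi\bigr)\!\left[f^{*\prime}(s)^2 - \frac{\beta^2 f^*(s)^2}{4\pi\,|a'(t(s))|}\right] ds,\]
and the hard part will be to show this integral is $\le 0$. Although the first factor is non-negative, the sign of the bracket is delicate; I expect to handle this via integration by parts against the Sturm–Liouville equation for $f^*$, combined with the integral constraint $\int_0^{|\Omega|}|a'(t(s))|^{-1}\,ds = \psi_{\max} \le |\Omega|/(4\pi)$ imposed on $|a'|$ by the torsion function. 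The hypothesis $\beta|\Omega|\le\beta^*\pi$ is essential here: it ensures in particular that $\kappa^* \le \beta$, which controls the sign of the bracket near $s = 0$ where the Sturm–Liouville equation forces $f^{*\prime}(0) = -\kappa^* f^*(0)/(4\pi)$.

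For the equality case, tracing through the argument, $\mathcal{R}[u] = \kappa^*$ forces both the Cauchy–Schwarz and isoperimetric steps above to saturate on every level set of $\psi$. Each level set $\{\psi = t\}$ is then a circle on which $|\nabla\psi|$ is constant, and this classically implies that $\Omega$ is a disk.
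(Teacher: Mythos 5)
Your reduction to a one-dimensional problem via level lines is exactly the paper's: torsion gauge $A_\Omega$, test functions constant on level sets of $\psi$, reparametrization by enclosed area $a = \mu(t)$, the coarea formula, the divergence identity $\int_{\{\psi=t\}}|\nabla\psi|\,d\mathcal H^1 = \mu(t)$, and the isoperimetric bound $G_\Omega(a) = |a'(t(a))| \ge 4\pi$. That much is right. The gap is in the step you flag as ``the hard part.'' You plug in the \emph{fixed} test function $f^*$ (the minimizer for $G\equiv 4\pi$) into the Rayleigh quotient with weight $G_\Omega$ and try to show the resulting integral is $\le \kappa^*$. But this inequality is false in general. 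Your own formula gives
\[
\mathcal R[u]-\kappa^*
=\frac{1}{\int |f^*|^2}\int_0^{a^*} a\,\bigl(G_\Omega-4\pi\bigr)\Bigl[\,|f^{*\prime}|^2-\tfrac{\beta^2}{4\pi G_\Omega}|f^*|^2\,\Bigr]\,da ,
\]
and because $G_\Omega$ sits in the denominator of the bracket, the bracket becomes positive wherever $G_\Omega(a)$ is large and $f^{*\prime}(a)\ne 0$. Concretely, if $G\equiv G_1>4\pi$ constant, a short computation shows $\mathcal R[f^*;G_1]-\kappa^*$ has the sign of $\int a|f^{*\prime}|^2\,da - \frac{\beta^2}{4\pi G_1}\int a|f^*|^2\,da$, which is strictly positive once $G_1$ is large enough; so the proposed test function gives the wrong-way inequality. (The pointwise estimate you would need, $|f^{*\prime}|^2 < \tfrac{\beta^2}{4\pi G_\Omega}|f^*|^2$, is \emph{strictly stronger} than the Riccati bound $|f^{*\prime}|^2<\tfrac{\beta^2}{(4\pi)^2}|f^*|^2$ that one can actually prove for $f^*$, and the extra constraint $\int G_\Omega^{-1}\,da = \psi_{\max}$ adds nothing beyond $G_\Omega\ge 4\pi$.)

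The paper avoids this by not fixing the test function: it first passes to $\kappa_1(G_\Omega,\beta) = \inf_f \mathcal R[f;G_\Omega]$ and then proves that $G\mapsto \kappa_1(G,\beta)$ is strictly \emph{decreasing} in $G$ (Proposition \ref{propMonotonicity}). The monotonicity is obtained by interpolating $G_z = (1-z)\cdot 4\pi + z\,G_\Omega$, differentiating $\kappa(z)=\kappa_1(G_z,\beta)$ via the Feynman--Hellmann formula (Proposition \ref{propDerivative}), which yields
\[
\kappa'(z)=\int_0^{a^*}\bigl(|P_z f_z'|^2-\beta^2 a^2|f_z|^2\bigr)\frac{\delta G}{a G_z^2}\,da,
\]
and then showing via the Riccati equation (Propositions \ref{propSystem}--\ref{propBounds}, Lemma \ref{lemComparison}) that the first factor is strictly negative \emph{for the eigenfunction $f_z$ of the interpolated problem}, not for $f^*$. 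That change — working with $f_z$ and the weight $P_z$ rather than freezing $f^*$ — is precisely what makes the Rayleigh-quotient comparison work, and is the missing idea in your proposal. You did correctly identify the essential role of the bound $\kappa^*<\beta$ (the paper's Lemma \ref{lemDisk}); in the paper it is exactly what makes $\pm\beta a$ strict super/subsolutions of the Riccati equation.
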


The proof is based on Rayleigh's principle and consists of two main parts. Since the conjectured inequality becomes an equality when $\Omega$ is a disk, we have to define a class of trial functions which, in the case of a disk, contains the first magnetic eigenfunction. This is the goal of Section \ref{secLevelLines}. As suggested by \cite{FH2019Magnetic}, we use the potential $A_\Omega$ derived from the torsion function $\psi$ of $\Omega$. We take as trial functions those which are constant on the level lines of $\psi$, that is, functions of the form $u(p)=g(t)$, where $p=(x,y)\in \Omega$ and $t=\psi(p)$. These can be understood as a substitute for radial functions in the case of a general simply connected domain. This approach is an example of the so-called \emph{method of prescribed level lines} (presented for instance in the classical monograph \cite{PolyaSzegoIsoperimetric}). Since our objective is to compare different domains having the same area, it is convenient to take as independent variable for these functions the area $a$ enclosed by the level lines of the torsion function. More explicitly, we perform the change of variable
\[a=\mu(t):=|\left\{p\in\Omega\,:\,\psi(p)>t\right\}|,\]
where $|\cdot|$ stands for the Lebesgue measure. Then, we have
\[u(p)=g(t)=f(a)\]
or, more explicitly,
\[u(p)=g(\psi(p))=f(\mu(\psi(p))),\]
with $f$ defined on the interval $[0,a^*]$, where $a^*:=|\Omega|$. Let us stress that the same idea is used in many existing works.  Among numerous examples, we can direct the reader to \cite[Section II]{PW1961} or to the definition of the decreasing rearrangement of a function in \cite[Section 2]{T76}.

Using this strategy, we are reduced to the study of functions on a fixed interval. We then express the Rayleigh quotient of $u$ in terms of $f$ and obtain
\begin{equation}
\frac{\int_\Omega\left|\nabla^{\beta A_\Omega} u(p)\right|^2\,dp}{\int_\Omega |u(p)|^2\,dp}=\frac{\int_0^{a^*}\left(a\,G_\Omega(a)|f'(a)|^2+\frac{\beta^2a}{G_\Omega(a)}|f(a)|^2\right)\,da}{\int_0^{a^*}|f(a)|^2\,da}.
\end{equation}
In the previous formula, 
\[G_\Omega:[0,a^*]\to [0,\infty]\] 
is a function (defined in Equations \eqref{eqGamma} and \eqref{eqG}) which depends only on $\Omega$. We show in Propositions \ref{propIsoperimetric} and \ref{propIsoperimetricG} that, as a result of the isoperimetric inequality, $G_\Omega(a)\ge 4\pi$ almost everywhere, with equality if, and only if, $\Omega$ is a disk. Now, setting
\begin{equation}\label{eqVarIntro} 
 \kappa_1(G_\Omega,\beta):=\inf_{f\in\mathcal F_\Omega\setminus\{0\}}\frac{\int_0^{a^*}\left(a\,G_\Omega(a)|f'(a)|^2+\frac{\beta^2a}{G_\Omega(a)}|f(a)|^2\right)\,da}{\int_0^{a^*}|f(a)|^2\,da},
\end{equation}
 we obtain
\[\lambda_1^N(\Omega,\beta)\le\kappa_1(G_\Omega,\beta).\]
The class of admissible function $\mathcal F_\Omega$ is specified in Section \ref{secArea}.

The idea behind the second part of our proof is to study the one-dimensional variational problem \eqref{eqVarIntro} directly, disregarding its origin from the method of level lines. Hence, in the course of Section \ref{secSL}, we only assume $G:[0,a^*]\to [0,\infty]$ to be a measurable function satisfying 
\[4\pi\le G(a)<\infty\]
for almost every $a$. Our goal is then to prove that $\kappa_1(G,\beta)\le\kappa_1(4\pi,\beta)$, with equality if, and only if, $G(a)=4\pi$ almost everywhere. We show that we can reduce the question to the case where $G$ is in $L^\infty$. Then, $\kappa_1(G,\beta)$ turns out to be the first eigenvalue of the Sturm-Liouville problem:
\begin{equation}
\label{eqSLIntro}
	\left\{\begin{array}{rl}
		-\left(P(a)f'\right)'+\beta^2Q(a)f=&\kappa f\mbox{ in }]0,a^*[,\\
		\lim_{a\to 0}P(a)f'(a)=&0,\\
		P(a^*)f'(a^*)=&0,
	\end{array}
\right.	
\end{equation}
with
\begin{equation*}
P(a):=a\,G(a) \mbox{\ \ \ and\ \ \ } Q(a):=\frac{a}{G(a)}.
\end{equation*}
Let us note that Problem \eqref{eqSLArea} is a special case ($G(a)=4\pi$) of Problem \eqref{eqSLIntro}. We finally show that $\kappa_1(G,\beta)$ decreases when $G$ increases pointwise (Proposition \ref{propMonotonicity}) by reducing the proof to the computation of a derivative. More precisely, given two $L^\infty$-functions $G_0$ and $G_1$, not almost-everywhere equal, such that
\[4\pi\le G_0(a)\le G_1(a)\] 
almost everywhere, we define 
\[G_z(a):=(1-z)\,G_0(a)+z\,G_1(a)\]
and $\kappa(z):=\kappa_1(G_z,\beta)$. We prove the Feynman-Hellmann type formula
\begin{equation*}
\kappa'(z)=\int_0^{a^*}\left(|a\,G_z(a)\,f_z'(a)|^2-\beta^2a^2|f_z(a)|^2\right)\frac{\delta G(a)}{aG_z(a)^2}\,da,
\end{equation*}
where $f_z$ is a normalized eigenfunction associated with $\kappa(z)$ and 
\[\delta G(a):=G_1(a)-G_0(a)\]
(Proposition \ref{propDerivative}). We then show that $\kappa'(z)<0$, by an analysis of the eigenvalue equation (Proposition \ref{propBounds}). As a result, we find
 \[\kappa_1(G_0,\beta)=\kappa(0)>\kappa(1)=\kappa_1(G,\beta),\]
proving monotonicity. 

We thus obtain 
\[\lambda_1^N(\Omega,\beta)\le\kappa_1(4\pi,\beta)\]
with equality if, and only if, $\Omega$ is a disk. We conclude the proof of Theorem \ref{thmRFK} by noting that, according to  the discussion at the end of Section \ref{subsecDisk}, $\kappa_1(4\pi,\beta)=\lambda_1^N(\Omega^*,\beta)$ when $\beta|\Omega|\le \beta^*\pi$. 

\begin{rem}
We are using only once each of the hypotheses that $\Omega$ is simply connected and that $\beta|\Omega|\le\beta^*\pi$. We are actually proving that, for any open, bounded and connected set  $\Omega\subset\mathbb R^2$ and for any $\beta>0$,
\begin{equation}\label{eqGeneralizedRFK}
	\lambda_1^N(\Omega,\beta\,A_\Omega)\le\kappa_1(4\pi,\beta),
\end{equation}
with equality if, and only if, $\Omega$ is a disk, where $A_\Omega$ is the vector potential defined in Equation \eqref{eqVectPot}. The left-hand side of Inequality \eqref{eqGeneralizedRFK} is equal to $\lambda_1^N(\Omega,\beta)$ if $\Omega$ is simply connected, and the right-hand side is equal to $\lambda_1(\Omega^*,\beta)$ if $\beta|\Omega|\le\beta^*\pi$.
\end{rem}

Some technical details, needed to deal with regularity issues, are relegated to Appendices \ref{appChainRules} and \ref{appSL}. 

\section{Method of level lines}
\label{secLevelLines}

\subsection{Preliminaries on the torsion function}
\label{subsecTorsion}

We recall that, throughout the paper, $\Omega\subset\mathbb R^2$ is assumed to be an open, bounded and simply connected set, with $C^\infty$ smooth boundary. Standard regularity theory for elliptic PDEs implies that the torsion function $\psi$, defined through the boundary-value problem \eqref{eqBVPTorsion}, belongs to $C^\infty(\overline{\Omega})$ and is real-analytic inside $\Omega$. Moreover, $\psi$ is by definition superharmonic and  therefore, by the strong minimum principle, $\psi(p)>0$ for all $p=(x,y)\in\Omega$. For future reference, we set 
\[t^*:=\max_{p\in\overline{\Omega}} \psi(p),\]
so that
\[\psi(\overline{\Omega})=[0,t^*].\]

We recall that the zero-set of a real-analytic function, defined in an open and connected domain and not identically zero, has zero Lebesgue measure (see for instance \cite{Mityagin2022Nodal} for a short self-contained proof). It follows that the set of critical points
\[\mathcal C:=\left\{p\in\Omega\,:\, \nabla\psi(p)=0\right\}\]
has zero Lebesgue measure (as can be seen by applying the previous result to $\left|\nabla\psi\right|^2$). 
 
\subsection{Torsion-type functions}

Let us recall that we want to bound $\lambda_1^N(\Omega,\beta)$ from above by the Rayleigh quotient of a suitable trial function, computed using the vector potential $A_\Omega$. We limit our choice to a class of trial functions that we call \emph{torsion-type}, which are, by definition, constant on the level-sets
\[\Gamma_t:=\left\{p\in\Omega\,:\,\psi(p)=t\right\}.\]
This procedure is known as the \emph{method of level sets} or, in the present two-dimensional case, \emph{method of level lines}. Intuitively, we can write such a torsion-type function $u:\overline{\Omega}\to\mathbb C$ as
\[u(p)=g(\psi(p)),\]
where  $g$ is a function from $[0,t^*]$ to $\mathbb C$. To obtain the best possible upper-bound, we then minimize over the function $g$. We have thus replaced the original question with a one-dimensional variational problem.   

In order to proceed, we need to clarify a technical detail: what regularity is required of $g$ for $u$ to be in $H^1(\Omega)$? This requires some basic tools from geometric measure theory and the method of rearrangements. The current section and the next follow rather closely  \cite{BLL2012Isoperimetric} and mainly consists of restatements of classical results. We also use these tools to compute the Rayleigh quotient of a torsion-type function.

We define, for $t\in [0,t^*]$, the  super-level set
\begin{equation*}
	\Omega_t:=\{p\in\Omega\,:\,\psi(p)>t\}
\end{equation*}
and we set
\begin{equation*}
	\mu(t):=|\Omega_t|.
\end{equation*}
For future reference, we write $a^*:=|\Omega|$. The function 
$\mu:[0,t^*]\to[0,a^*]$ is called the \emph{distribution function} of $\psi$. It is non-increasing and right-continuous, with $\mu(0)=a^*$ and $\mu(t^*)=0$ (see \cite{BLL2012Isoperimetric}).

The following result, which is a special case of a fundamental theorem in geometric measure theory, is our starting point (see for instance \cite[Section 3.4.3]{EvansGariepy}). It would also hold if $\psi$ were only Lipschitz.
\begin{prop}[The Coarea Formula]\label{propCoarea} Let $h:\Omega\to\mathbb C$ be integrable. 
\begin{enumerate}
\item For almost all $t\in[0,t^*]$ (with respect to the Lebesgue measure), the restriction of $h$ to $\Gamma_t$ is $\mathcal H^1$-integrable (where  $\mathcal H^1$ is the $1$-dimensional Hausdorff measure).
\item  The function
\[t\mapsto \int_{\Gamma_t}h(q)\,d\,\mathcal H^1(q)\]
is integrable in $[0,t^*]$.
\item For all $t\in[0,t^*]$,
\begin{equation*}
	\int_{\Omega_t}h(p)|\nabla \psi(p)|\,dp=\int_t^{t^*}\left(\int_{\Gamma_s}h(q)\,d\,\mathcal H^1(q)\right)\,ds.
\end{equation*}
\end{enumerate}
\end{prop}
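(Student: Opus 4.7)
The plan is to deduce this statement from the general coarea formula for Lipschitz (indeed $C^\infty$) maps $\psi:\mathbb R^n\to\mathbb R$, as stated e.g.\ in \cite[Section 3.4.3]{EvansGariepy}: for every Borel $g:\mathbb R^n\to[0,\infty]$,
\[
\int_{\mathbb R^n} g(p)\,|\nabla\psi(p)|\,dp \;=\; \int_{\mathbb R}\Bigl(\int_{\psi^{-1}(s)} g\,d\mathcal H^{n-1}\Bigr)\,ds.
\]
Specializing to $n=2$ and to $\psi\in C^\infty(\overline\Omega)$, extended by an arbitrary smooth value outside $\overline\Omega$ (or by working directly on the open set $\Omega$, as is permitted by the same reference), this one formula will deliver all three assertions.

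First I would fix $t\in[0,t^*]$ and apply the general formula to the Borel function $g:=|h|\,\chi_{\Omega_t}$ (interpreting $h$ as extended by $0$ outside $\Omega_t$). Since $h\in L^1(\Omega)$ and $|\nabla\psi|$ is bounded, the left-hand side $\int_{\Omega_t}|h(p)|\,|\nabla\psi(p)|\,dp$ is finite, so the outer integral on the right is finite; Tonelli then gives that for a.e.\ $s\in\mathbb R$ the inner integral $\int_{\psi^{-1}(s)\cap\Omega_t}|h|\,d\mathcal H^1$ is finite, i.e.\ $h|_{\psi^{-1}(s)\cap\Omega_t}$ is $\mathcal H^1$-integrable. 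The key geometric observation, which I would verify from the definitions, is that $\psi^{-1}(s)\cap\Omega_t=\Gamma_s$ for $s\in(t,t^*]$, and is empty for $s\le t$ or $s>t^*$, since $\Omega_t=\{\psi>t\}\supset\Omega_s$ whenever $s>t$. This yields assertions (1) and (2) on the relevant interval $[t,t^*]$.

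To obtain the equality in (3), I would decompose the complex-valued $h$ as $h=(\operatorname{Re}h)^+-(\operatorname{Re}h)^-+i\bigl((\operatorname{Im}h)^+-(\operatorname{Im}h)^-\bigr)$ and apply the general coarea formula to each of the four nonnegative pieces multiplied by $\chi_{\Omega_t}$. Summing the four resulting identities and using the geometric reduction $\psi^{-1}(s)\cap\Omega_t=\Gamma_s$ for $s\in(t,t^*]$, one obtains
\[
\int_{\Omega_t} h(p)\,|\nabla\psi(p)|\,dp \;=\; \int_t^{t^*}\Bigl(\int_{\Gamma_s} h(q)\,d\mathcal H^1(q)\Bigr)\,ds,
\]
which is precisely (3).

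I do not expect any real obstacle here, as the statement is essentially a direct specialization of a classical theorem; the only mildly delicate points are the measure-theoretic bookkeeping to pass from the nonnegative Borel version of the coarea formula to complex-valued $L^1$ functions, and the need to argue that the critical set $\mathcal C=\{\nabla\psi=0\}$ causes no trouble. The latter is already built into the coarea formula (on $\mathcal C$ the integrand $|\nabla\psi|$ vanishes), together with the fact, already noted in Section \ref{subsecTorsion}, that $\mathcal C$ has Lebesgue measure zero, and with Sard's theorem, which guarantees that $\psi(\mathcal C)$ has one-dimensional Lebesgue measure zero, so that for a.e.\ $s$ the level set $\Gamma_s$ is a smooth $1$-submanifold of $\Omega$ on which $\mathcal H^1$ agrees with the induced Riemannian length measure.
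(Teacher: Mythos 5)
Your proposal is correct and agrees with the paper's treatment: the paper gives no proof of Proposition~\ref{propCoarea}, treating it as a direct citation of the general coarea formula in \cite[Section 3.4.3]{EvansGariepy}. Your argument is exactly the routine specialization of that theorem---restricting to $\Omega_t$ via $g=|h|\chi_{\Omega_t}$, identifying $\psi^{-1}(s)\cap\Omega_t=\Gamma_s$ for $s\in(t,t^*]$, invoking Tonelli for the integrability statements, and splitting $h$ into its nonnegative real and imaginary parts for the identity---so nothing in it is in tension with the paper.
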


\begin{cor} \label{corMu}Let us define (almost everywhere) the function
\[\gamma_\Omega:[0,t^*]\to[0,\infty]\]
by 
\begin{equation}
\label{eqGamma}
\gamma_\Omega(t):=\int_{\Gamma_t}\frac{1}{|\nabla\psi(q)|}\,d\,\mathcal H^1(q).
\end{equation}
Then $\gamma_\Omega\in L^1(]0,t^*[,dt)$ and 
\begin{equation}
\label{eqDerivative}
	\mu(t)=\int_t^{t^*}\gamma_\Omega(s)\,ds.
\end{equation}
\end{cor}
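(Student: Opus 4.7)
The strategy is to apply the coarea formula (Proposition \ref{propCoarea}) with the formal choice $h = 1/|\nabla\psi|$: the integrand $h|\nabla\psi|$ then equals $1$ wherever $|\nabla\psi|\neq 0$, collapsing the left-hand side of the coarea identity to $|\Omega_t\setminus\mathcal{C}| = \mu(t)$, while the inner integral on the right becomes exactly $\gamma_\Omega(s)$. The obstacle is that $1/|\nabla\psi|$ may fail to lie in $L^1(\Omega)$, so Proposition \ref{propCoarea} does not apply directly. I would sidestep this by a truncation-and-monotone-convergence argument, exploiting the fact recalled in Section \ref{subsecTorsion} that the critical set $\mathcal{C}$ has zero Lebesgue measure.

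Concretely, for $\varepsilon > 0$ set
\[h_\varepsilon(p) := \frac{\chi_{\{|\nabla\psi|>\varepsilon\}}(p)}{|\nabla\psi(p)|},\]
which is bounded by $1/\varepsilon$ and hence belongs to $L^1(\Omega)$. Applying Proposition \ref{propCoarea} to $h_\varepsilon$ yields
\[\int_{\Omega_t} h_\varepsilon(p)|\nabla\psi(p)|\,dp = \int_t^{t^*}\int_{\Gamma_s} h_\varepsilon(q)\,d\mathcal{H}^1(q)\,ds.\]
As $\varepsilon\to 0^+$, the integrand on the left equals $\chi_{\{|\nabla\psi|>\varepsilon\}}$ and increases pointwise to $\chi_{\Omega\setminus\mathcal{C}}$, so by monotone convergence the left-hand side tends to $|\Omega_t\setminus\mathcal{C}| = \mu(t)$. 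A preliminary application of Proposition \ref{propCoarea} to $\chi_\mathcal{C}$ (whose product with $|\nabla\psi|$ vanishes identically) shows that $\mathcal{H}^1(\Gamma_s\cap\mathcal{C}) = 0$ for almost every $s$; for such $s$ the restriction $h_\varepsilon|_{\Gamma_s}$ increases pointwise $\mathcal{H}^1$-a.e. to $1/|\nabla\psi|$, and two successive applications of monotone convergence (first on $\Gamma_s$, then on $\,]t,t^*[\,$) identify the limit of the right-hand side as $\int_t^{t^*}\gamma_\Omega(s)\,ds$.

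Equating the two limits then yields \eqref{eqDerivative} for every $t\in[0,t^*]$, and specializing to $t=0$ gives $\int_0^{t^*}\gamma_\Omega(s)\,ds = \mu(0) = a^* < \infty$, which is the claimed $L^1$-integrability (and, incidentally, forces $\gamma_\Omega(s)<\infty$ for almost every $s$). The only genuinely delicate step is justifying the double monotone passage to the limit; once $|\mathcal{C}|=0$ and the auxiliary null-set property for $\Gamma_s\cap\mathcal{C}$ are in hand, the remainder of the argument is a direct unpacking of the coarea formula.
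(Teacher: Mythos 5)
Your argument is correct and follows the same conceptual skeleton as the paper: truncate the potentially non-integrable function $1/|\nabla\psi|$, apply the Coarea Formula (Proposition~\ref{propCoarea}) to the truncation, and pass to the limit on both sides by monotone convergence. The only difference is the choice of truncation. You use $h_\varepsilon=\chi_{\{|\nabla\psi|>\varepsilon\}}/|\nabla\psi|$, which vanishes identically on the critical set $\mathcal C$, so $h_\varepsilon$ increases to $1/|\nabla\psi|$ only off $\mathcal C$; this forces the extra step you correctly identified and supplied, namely showing $\mathcal H^1(\Gamma_s\cap\mathcal C)=0$ for a.e.\ $s$ (via a preliminary application of the coarea formula to $\chi_{\mathcal C}$) before monotone convergence on the fibers can be invoked. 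The paper instead truncates from above with $h_n=\min(n,1/|\nabla\psi|)$, which increases to $1/|\nabla\psi|$ on \emph{all} of $\Omega$ (the limit being $+\infty$ on $\mathcal C$), so monotone convergence on $\Gamma_s$ applies directly and the auxiliary null-set argument is not needed; the set $\mathcal C$ only enters on the left-hand side, where $h_n|\nabla\psi|=\min(n|\nabla\psi|,1)\uparrow\chi_{\Omega\setminus\mathcal C}$ and $|\mathcal C|=0$ is used. Both proofs are valid; the paper's choice of truncation is marginally cleaner because it removes one moving part.
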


\begin{proof}[Proof of the corollary] Roughly speaking, we apply the Coarea Formula to the function $g=1/|\nabla \psi|$. However, this function may not be integrable. To bypass this difficulty, we define, for any positive integer $n$,
\[h_n(p):=\min\left(n,\frac1{|\nabla \psi(p)|}\right).\]
Each function $h_n$ is bounded and therefore integrable in $\Omega$. The Coarea Formula then yields, for all $t\in[0,t^*]$,
\begin{equation*}
	\int_{\Omega_t}h_n(p)|\nabla \psi(p)|\,dp=\int_t^{t^*}\left(\int_{\Gamma_s}h_n(q)\,d\sigma_s(q)\right)\,ds.
\end{equation*}

By applying twice the Monotone Convergence Theorem, we find that the right-hand side converges to 
\[\int_t^{t^*}\gamma_\Omega(s)\,ds\]
as $n$ tends to $\infty$. On the other hand, $h_n(p)|\nabla\psi(p)|$ is ultimately equal to $1$ if $1/|\nabla\psi(p)|$ is finite, that is to say if $p$ is not a critical point, and is constantly $0$ if $p$ is a critical point. By the Monotone Convergence Theorem, the left-hand side  therefore converges to 
\[|\Omega_t\setminus\mathcal C|\]
(we recall that $\mathcal C$ is the set of critical points of $\psi$). Since $\mathcal C$ has measure zero, the previous quantity is equal to $\mu(t)$.

It remains only to show that $\gamma_\Omega$ belongs to $L^1(]0,t^*[,dt)$. Since $\gamma_\Omega$ is non-negative, it is enough to apply Equation \eqref{eqDerivative} for $t=0$:
\[\int_0^{t^*}\gamma_\Omega(s)\,ds=\mu(0)=a^*<\infty.\qedhere\]
\end{proof}

\begin{rem}\label{remCoarea}
Adapting the proof of the previous corollary, we see that the Coarea Formula, that is, part 3 of Proposition \ref{propCoarea}, holds  when $h$ is any measurable function from $\Omega$ to $[0,\infty]$ which is finite almost everywhere, without any conditions of integrability. In that case, either both side of the equation are finite and equal or both are infinite.
\end{rem}

\begin{rem}\label{remAC}
According to Equation \eqref{eqDerivative}, $\mu$ is absolutely continuous, with derivative $-\gamma_\Omega$. 
\end{rem}

It is crucial to note that the isoperimetric inequality implies a lower bound on $\gamma_\Omega$.
\begin{prop}\label{propIsoperimetric}
 For almost every $t\in [0,t^*]$, $\gamma_\Omega(t)\ge 4\pi$. Furthermore, we have $\gamma_\Omega(t)=4\pi$ almost everywhere if, and only if, $\Omega$ is a disk. 
\end{prop}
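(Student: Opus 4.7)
The plan is to combine three classical ingredients: the divergence theorem applied to $\Omega_t$ (exploiting $\Delta\psi = 1$), the Cauchy-Schwarz inequality on the level line $\Gamma_t$, and the planar isoperimetric inequality applied to the super-level set $\Omega_t$.

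The first step is to fix a regular value $t$ of $\psi$; by Sard's theorem, this holds for almost every $t\in(0,t^*)$, and for such $t$ the level set $\Gamma_t$ is a smooth compact $1$-submanifold of $\Omega$. Since $\psi>0$ in $\Omega$ and $\psi=0$ on $\partial\Omega$, the set $\Omega_t$ is compactly contained in $\Omega$ whenever $t>0$, so $\partial\Omega_t=\Gamma_t$, with outer unit normal $-\nabla\psi/|\nabla\psi|$. Integrating the identity $\Delta\psi=1$ over $\Omega_t$ and applying the divergence theorem (with the paper's sign convention) gives
\[
\mu(t)=\int_{\Omega_t}\Delta\psi\,dp=\int_{\Gamma_t}|\nabla\psi(q)|\,d\mathcal H^1(q).
\]
Cauchy-Schwarz applied to the functions $|\nabla\psi|^{1/2}$ and $|\nabla\psi|^{-1/2}$ on $\Gamma_t$ then yields
\[
\mathcal H^1(\Gamma_t)^2\le\left(\int_{\Gamma_t}|\nabla\psi|\,d\mathcal H^1\right)\gamma_\Omega(t)=\mu(t)\,\gamma_\Omega(t),
\]
while the isoperimetric inequality applied to $\Omega_t$ (whose perimeter equals $\mathcal H^1(\Gamma_t)$) provides $4\pi\mu(t)\le\mathcal H^1(\Gamma_t)^2$. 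Combining these two inequalities and dividing by $\mu(t)>0$ (valid for $t<t^*$) yields $\gamma_\Omega(t)\ge 4\pi$.

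For the equality case, one direction is immediate: if $\Omega$ is a disk, then $\psi$ is radial, each $\Gamma_t$ is a circle, $|\nabla\psi|$ is constant on $\Gamma_t$, and both Cauchy-Schwarz and the isoperimetric inequality are equalities, giving $\gamma_\Omega(t)=4\pi$. Conversely, if $\gamma_\Omega(t)=4\pi$ almost everywhere, then for almost every regular value $t$ the isoperimetric inequality is an equality, so $\Omega_t$ must be an open disk. Picking a sequence $t_n\searrow 0$ of such values, the sets $\Omega_{t_n}$ form a nested increasing sequence of open disks whose union equals $\{\psi>0\}=\Omega$; since nested bounded open disks have centers forming a Cauchy sequence (the inclusion $B(c,r)\subset B(c',r')$ forces $|c-c'|\le r'-r$), this union is itself an open disk, so $\Omega$ is a disk. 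The main delicate point is the rigorous upgrading of ``equality in the isoperimetric inequality'' (which classically holds only modulo null sets) to the assertion that the smooth open set $\Omega_t$ is literally an open disk; this is handled by invoking the smoothness of $\Gamma_t$ at regular values. The inequality direction itself is a clean three-line computation once the divergence theorem identity is established.
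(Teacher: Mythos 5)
Your proof is correct and follows essentially the same approach as the paper: the divergence-theorem identity $\mu(t)=\int_{\Gamma_t}|\nabla\psi|\,d\mathcal H^1$, Cauchy--Schwarz on $\Gamma_t$, and the planar isoperimetric inequality applied to $\Omega_t$. Your handling of the converse in the equality case (passing from ``$\Omega_t$ is a disk for a.e.\ $t$'' to ``$\Omega$ is a disk'' via the union of nested open disks) is in fact a bit more explicit and careful than the paper's terse ``thus $\psi$ is radial,'' but it is the same underlying argument.
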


\begin{proof} We first use the Cauchy-Schwarz inequality: for $t\in[0,t^*]$,
\begin{align*} 
|\Gamma_t|&=\int_{\Gamma_t}1\,d\,\mathcal H^1=\int_{\Gamma_t}|\nabla \psi|^{1/2}\frac{1}{|\nabla \psi|^{1/2}}\,d\,\mathcal H^1\\
&\le \left(\int_{\Gamma_t}|\nabla \psi|\,d\,\mathcal H^1\right)^{1/2}\left(\int_{\Gamma_t}\frac{1}{|\nabla \psi|}\,d\,\mathcal H^1\right)^{1/2}.
\end{align*}
According to Sard's theorem, almost all $t$ in $[0,t^*]$ are regular values of $\psi$. For these values, the Hausdorff measure $\mathcal H^1$ coincides on $\Gamma_t$ with the measure $\sigma_t$ induced by the arclength and we obtain, using Green's formula, 
\begin{equation*}
\int_{\Gamma_t}|\nabla\psi(q)|\,d\sigma_t(q)=-\int_{\Gamma_t}\frac{\partial\psi}{\partial \nu}(q)\,d\sigma_t(q)=\int_{\Omega_t}\Delta \psi(p)\,dp=|\Omega_t|.
\end{equation*} 
From the two previous facts and the isoperimetric inequality,
\begin{equation*}
	\gamma_\Omega(t)\ge \frac{|\Gamma_t|^2}{|\Omega_t|}\ge 4\pi,
\end{equation*}
which proves the desired inequality.

To treat the case of equality, we observe that if we have $\gamma_{\Omega}(t)=4\pi$  for almost every $t$, we also have, from the previous inequalities, $|\Gamma_t|^2/|\Omega_t|=4\pi$. Thus, $\Omega_t$ is a disk for almost every $t$ and, in particular, we can pick a positive sequence $(t_n)$, decreasing to $0$, such that $\Omega_{t_n}$ is a disk for all $n$. Since the sequence of sets $\Omega_{t_n}$ is non-decreasing with respect to inclusion and since $\Omega=\bigcup_n\Omega_{t_n}$, it follows that $\Omega$ is a disk. Conversely, if $\Omega$ is a disk, then $\psi$ is radial, all level sets are circles on which $|\nabla\psi|$ is constant, hence we have equality in both the Cauchy-Schwarz and the isoperimetric inequalities. 
\end{proof}

\begin{dfn}\label{dfnG} Let 
 $\mathcal G_\Omega$ be the space of functions $g:[0,t^*]\to\mathbb C$ (up to equality almost everywhere) such that 
\begin{enumerate}
	\item $g\in L^2(]0,t^*[,\gamma_\Omega(t)\,dt)$,
	\item $g'\in L^2(]0,t^*[, \mu(t)\,dt)$.
\end{enumerate}
\end{dfn}

Let us briefly discuss the meaning of Definition \ref{dfnG}. The first condition, together with the inequality in Proposition \ref{propIsoperimetric}, implies that $g$ is locally integrable in $]0,t^*[$. We can therefore consider the associated distribution in $]0,t^*[$, which we also denote by $g$. The second condition then requires $g'$, the derivative of this distribution, to be  a locally integrable function which satisfies
\begin{equation}\label{eqeqL2t}
	\int_0^{t^*}|g'(t)|^2\,\mu(t)\,dt<\infty.
\end{equation}

\begin{prop} \label{propH1} If $g$ belongs to $\mathcal G_\Omega$, the function $u:\Omega\to\mathbb C$, defined by \[u(p):=g(\psi(p)),\] belongs to $H^1(\Omega)$. 
\end{prop}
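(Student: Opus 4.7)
The plan is to verify both that $u \in L^2(\Omega)$ and that $u$ admits weak first derivatives in $L^2(\Omega)$, with the coarea formula playing the central computational role. The formal identity behind everything is the chain rule $\nabla u = g'(\psi)\,\nabla\psi$; most of the work is making sense of it and then evaluating norms by slicing along the level sets $\Gamma_t$.

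For the $L^2$-norm of $u$, I would apply the coarea formula in the extended form of Remark \ref{remCoarea} to the nonnegative measurable function $h(p) := |g(\psi(p))|^2/|\nabla\psi(p)|$ (set to $+\infty$ on the critical set $\mathcal{C}$, which has measure zero and so contributes nothing). Since $|u|^2 = h\,|\nabla\psi|$ almost everywhere, the coarea formula collapses the integral to
\[
	\int_\Omega |g(\psi(p))|^2\,dp = \int_0^{t^*}|g(s)|^2\,\gamma_\Omega(s)\,ds,
\]
which is finite by condition (1) in Definition \ref{dfnG}.

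For the gradient, I would tentatively set $\nabla u := g'(\psi)\,\nabla\psi$ (a pointwise-a.e.\ definition, since $\nabla\psi = 0$ on $\mathcal{C}$ kills any ambiguity in $g'(\psi)$ there) and compute its $L^2$-norm again by coarea. Taking $h(p) = |g'(\psi(p))|^2\,|\nabla\psi(p)|$ gives
\[
	\int_\Omega |g'(\psi)|^2|\nabla\psi|^2\,dp = \int_0^{t^*}|g'(s)|^2\left(\int_{\Gamma_s}|\nabla\psi|\,d\mathcal{H}^1\right)ds.
\]
For almost every $s$ — namely the regular values of $\psi$ by Sard's theorem — the inner integral is computed exactly as in the proof of Proposition \ref{propIsoperimetric}: the Hausdorff measure coincides with arclength, and Green's formula together with $\Delta\psi = 1$ give $\int_{\Gamma_s}|\nabla\psi|\,d\mathcal{H}^1 = \mu(s)$. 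Thus the right-hand side equals $\int_0^{t^*}|g'(s)|^2\,\mu(s)\,ds$, which is finite by condition (2).

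The main obstacle is the rigorous justification that $g'(\psi)\,\nabla\psi$ is the distributional gradient of $u$, since $g$ lies only in a weighted Sobolev-type space $\mathcal{G}_\Omega$ and $\psi$ has critical points. My approach would be the standard approximation scheme: mollify $g$ (after a suitable cutoff or extension to handle the endpoints $0$ and $t^*$, where the weights $\gamma_\Omega$ and $\mu$ behave asymmetrically) to obtain $g_n \in C^\infty([0,t^*])$ with $g_n \to g$ in $L^2(\gamma_\Omega\,dt)$ and $g_n' \to g'$ in $L^2(\mu\,dt)$. For the smooth functions $u_n := g_n \circ \psi$ the classical chain rule applies, and the two identities established above show both that $u_n \to u$ in $L^2(\Omega)$ and that $\nabla u_n = g_n'(\psi)\,\nabla\psi$ is Cauchy in $L^2(\Omega)$ with limit $g'(\psi)\,\nabla\psi$. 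Closure of the gradient operator then gives $u \in H^1(\Omega)$ with the expected weak gradient. The delicate piece — an honest density argument in the weighted space and a careful chain rule tolerant of the critical set — is precisely what the authors defer to Appendix \ref{appChainRules}; once that is granted, the proposition reduces to the two coarea identities above.
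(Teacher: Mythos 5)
Your two coarea computations are exactly the ones the paper uses, and you correctly identify the distributional chain rule $\nabla u = (g'\circ\psi)\,\nabla\psi$ as the piece that requires a density argument, deferring it to Appendix~\ref{appChainRules}. However, the approximation scheme you sketch is not the one the paper uses, and as stated it is both stronger than necessary and harder to justify. You propose mollifying $g$ to obtain $g_n\to g$ in $L^2(\gamma_\Omega\,dt)$ and $g_n'\to g'$ in $L^2(\mu\,dt)$, then concluding by the completeness of $H^1(\Omega)$. Simultaneous $L^2$ density of smooth functions in both weighted spaces is delicate: the weight $\gamma_\Omega$ is merely $L^1$ (it can blow up near $t=t^*$, where $\mu$ vanishes), and the naive Fubini/Cauchy--Schwarz estimate relating $\|g_n-g\|_{L^2(\gamma_\Omega\,dt)}$ to $\|g_n'-g'\|_{L^2(\mu\,dt)}$ fails because $\int_0^{t^*}\frac{\gamma_\Omega(t)}{\mu(t)}\,dt$ is typically divergent. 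The paper avoids this by working in $L^1$ instead: since $\gamma_\Omega\,dt$ and $\mu\,dt$ are finite measures, $L^2\subset L^1$ on $]0,t^*[$, and $L^1$ convergence suffices to pass to the limit in the integration-by-parts identity against a fixed test function $\varphi\in C_c^\infty(\Omega)$. Moreover, the paper does not mollify $g$ but rather approximates $g'$ by $h_n\in C_c(]0,t^*[)$ in $L^1(\mu\,dt)$ and then defines $g_n(t):=g(0)+\int_0^t h_n$; the identity $\mu(t)=\int_t^{t^*}\gamma_\Omega(s)\,ds$ and Fubini then give $g_n\to g$ in $L^1(\gamma_\Omega\,dt)$ for free. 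In short: your structure is correct and the coarea identities are exactly right, but you would want to relax your approximation target to $L^1$ and use a primitive-based construction rather than mollification-plus-cutoff, which is what makes the appendix argument go through cleanly.
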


\begin{proof}  

We first apply the Coarea Formula (specifically Remark \ref{remCoarea}) to the function $|u|^2$ and obtain
\begin{equation}
	\label{eqL2t}
	\int_\Omega |u(p)|^2\,dp=\int_0^{t^*}|g(t)|^2\,\gamma_\Omega(t)\,dt.
\end{equation}
By definition of $\mathcal G_\Omega$, the right-hand side is finite, and therefore $u\in L^2(\Omega)$.

We now claim that the function $u$ is locally integrable in $\Omega$ and that, interpreted as a distribution in $\Omega$,
\begin{equation}\label{eqGrad}
	\nabla u(p)=g'(\psi(p))\nabla \psi(p).
\end{equation}
If $g\in C^1([0,t^*])$, the claim follows immediately from the chain rule. When we only assume that $g\in\mathcal G_\Omega$, we can establish \eqref{eqGrad} using an approximation argument detailed in Appendix \ref{appChainRules}. 

We finally apply the Coarea Formula to $|\nabla u|^2$, using the previous claim, and obtain
\begin{equation*}	
\int_\Omega\left|\nabla u(p)\right|^2\,dp=\int_\Omega |g'(\psi(p))|^2|\nabla \psi(p)|^2\,dp=\int_0^{t^*}|g'(t)|^2\left(\int_{\Gamma_t}|\nabla \psi(q)|\,d\,\mathcal H^1(q)\right)\,dt.
\end{equation*}
As shown in the proof of Proposition \ref{propIsoperimetric},
\begin{equation*}
	\int_{\Gamma_t}|\nabla \psi(q)|\,d\,\mathcal H^1(q)=\mu(t)
\end{equation*}
for almost every $t\in]0,t^*[$, therefore
\begin{equation*}
	\int_\Omega\left|\nabla u(p)\right|^2\,dp=\int_0^{t^*}|g'(t)|^2\,\mu(t)\,dt.
\end{equation*}
The right-hand side is finite by definition of $\mathcal G_\Omega$, therefore $\nabla u$ is in $L^2(\Omega)$ and thus $u$ is in $H^1(\Omega)$.
\end{proof}

Let us finally give a formula for the $L^2$-norm of the magnetic gradient of a torsion-type function, using the vector potential $A_\Omega$. We  fix $g\in\mathcal G_\Omega$ and we define, as before, $u(p):=g(\psi(p))$. For any $\beta\ge0$, we find, using Equation \eqref{eqGrad}, that
\begin{equation*}
	\nabla u(p)-i\beta u(p)A_\Omega(p)=g'(\psi(p))\nabla \psi(p)+i\beta g(\psi(p))\nabla^\perp\psi(p)
\end{equation*}
for almost every $p$ in $\Omega$. Since the vectors $\nabla\psi(p)$ and $\nabla^\perp \psi(p)$ are orthogonal and have the same length,
\begin{align*}
	\left|g'(\psi(p))\nabla \psi(p)-i\beta g(\psi(p))\nabla^\perp\psi(p)\right|^2=\left(|g'(t)|^2+\beta^2|g(t)|^2\right)|\nabla\psi(p)|^2,
\end{align*}
with $t=\psi(p)$. The Coarea Formula then gives
\begin{equation}\label{eqMagnG}
	\int_\Omega\left|\nabla u(p)-i\beta u(p)A_\Omega(p)\right|^2\,dp=\int_0^{t^*}\left(|g'(t)|^2+\beta^2|g(t)|^2\right)\,\mu(t)\,dt.
\end{equation}  

\subsection{Reparametrization by the area}
\label{secArea}

Since we are comparing different domains having the same area, it is convenient to use an alternative parametrization of torsion-type functions. We use as a new variable $a=\mu(t)$, that is, we parametrize the values $u(p)$ of a torsion-type function $u$ by the area of the super-level set $\Omega_t$, with $t=\psi(p)$. By definition of the distribution function $\mu$, the variable $a$ runs over the  interval $[0,a^*]$. Roughly speaking, we can then write a torsion-type function $u:\Omega\to\mathbb C$ as $u(p)=f(a)$, where $a=\mu(\psi(p))$ and $f$ is a complex-valued function defined on $[0,a^*]$. 

As before, we need to specify the regularity required of $f$ for $u$ to be in $H^1(\Omega)$. We do this using Proposition \ref{propH1} and the properties of the distribution function $\mu$. Let us first note that, since $\mu$ is absolutely continuous, with $\mu'(t)$ negative almost everywhere ($\mu'(t)=-\gamma_\Omega(t)\le-4\pi$), $\mu$ is strictly decreasing and therefore is a homeomorphism from $[0,t^*]$ to $[0,a^*]$. The formulas $f=g\circ\mu^{-1}$ and $g=f\circ\mu$ are therefore well defined and maps complex-valued functions $g$ on $[0,t^*]$ to complex-valued functions $f$ on $[0,a^*]$ and vice versa. In addition, $\mu^{-1}$ is also absolutely continuous, with derivative almost everywhere $(\mu^{-1})'(a)=-1/G_\Omega(a)$, where we define
\begin{equation}\label{eqG}
G_\Omega(a):=\gamma_\Omega\left(\mu^{-1}(a)\right).
\end{equation} 
Since both $\mu$ and $\mu^{-1}$ are absolutely continuous, they map sets of measure zero to sets of measure zero. This means that the above correspondence between the functions $g(t)$ and $f(a)$ can be defined up to  equality almost everywhere.

Proposition \ref{propIsoperimetric} can be immediately translated in terms of the function $G_\Omega$.
\begin{prop}\label{propIsoperimetricG}
For almost every $a\in [0,a^*]$,
\begin{equation}\label{eqLowerBoundG}
	G_\Omega(a)\ge 4\pi.
\end{equation}
In addition, equality holds for almost every $a$ if, and only if, $\Omega$ is a disk. 
\end{prop}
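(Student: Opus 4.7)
My plan is to observe that Proposition \ref{propIsoperimetricG} is essentially a change-of-variables restatement of Proposition \ref{propIsoperimetric}. The substantive work---the isoperimetric inequality applied to the level sets of $\psi$---has already been done, and what remains is purely measure-theoretic bookkeeping about the homeomorphism $\mu$.

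First I would recall that, by Remark \ref{remAC}, the distribution function $\mu$ is absolutely continuous on $[0,t^*]$, and since $\mu'(t)=-\gamma_\Omega(t)\le -4\pi<0$ almost everywhere, $\mu$ is a strictly decreasing homeomorphism onto $[0,a^*]$ with absolutely continuous inverse (as already observed just before the statement). In particular, both $\mu$ and $\mu^{-1}$ have the Luzin $N$-property: they map Lebesgue null sets to Lebesgue null sets.

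For the inequality, set $N:=\{t\in[0,t^*]\,:\,\gamma_\Omega(t)<4\pi\}$. By Proposition \ref{propIsoperimetric}, $|N|=0$. Using the definition $G_\Omega=\gamma_\Omega\circ\mu^{-1}$ and the bijectivity of $\mu$, we have
\[
	\{a\in[0,a^*]\,:\,G_\Omega(a)<4\pi\}=\mu(N),
\]
which has measure zero by absolute continuity of $\mu$. This yields \eqref{eqLowerBoundG} almost everywhere.

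For the equality case, I would argue both directions in the same fashion. If $\Omega$ is a disk, then by Proposition \ref{propIsoperimetric} the set $F:=\{t\,:\,\gamma_\Omega(t)\ne 4\pi\}$ has measure zero, so $\mu(F)=\{a\,:\,G_\Omega(a)\ne 4\pi\}$ has measure zero by absolute continuity of $\mu$. Conversely, if $\{a\,:\,G_\Omega(a)\ne 4\pi\}$ has measure zero, then its preimage under $\mu$, which is exactly $F$, has measure zero by absolute continuity of $\mu^{-1}$; Proposition \ref{propIsoperimetric} then forces $\Omega$ to be a disk. The only subtle point---and really the only step that requires any care---is justifying that both $\mu$ and $\mu^{-1}$ preserve null sets, but this is immediate from absolute continuity, so I do not foresee a genuine obstacle.
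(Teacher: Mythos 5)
Your proof is correct and takes essentially the same approach as the paper, which simply asserts that the result is an immediate translation of Proposition \ref{propIsoperimetric} via the change of variable $a=\mu(t)$. You spell out the underlying mechanism (Luzin $N$-property of $\mu$ and $\mu^{-1}$ following from absolute continuity) that the paper leaves implicit but which it relies upon when it observes that the correspondence $g\leftrightarrow f$ is well-defined up to a.e. equality.
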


In the rest of this section, we frequently use the following change of variable formula. It is simply a consequence of the fact that, since $-\gamma_\Omega$ is the derivative of $\mu$, the measure $\gamma_\Omega(t)\,dt$ on $[0,t^*]$ is the pull-back by the mapping $\mu:[0,t^*]\to[0,a^*]$ of the Lebesgue measure on $[0,a^*]$.  
\begin{prop}\label{propChangeVar}
	If $h$ is a measurable function from $[0,a^*]$ to $[0,\infty]$, then
	\begin{equation*}
		\int_0^{a^*}h(a)\,da=\int_0^{t^*}h(\mu(t))\,\gamma_\Omega(t)\,dt.
	\end{equation*} 
	Furthermore, if $h$ is a measurable function from $[0,a]$ to $\mathbb C$, then $h$ is integrable in $[0,a^*]$ if, and only if, $t\mapsto h(\mu(t))\,\gamma_{\Omega}(t)$ is integrable in $[0,t^*]$, and in that case the above identity also holds.
\end{prop}

\begin{dfn}\label{dfnF} Let $\mathcal F_\Omega$ be the space of functions $f:[0,a^*]\to\mathbb C$ (up to equality almost everywhere) such that 
\begin{enumerate}
	\item $f\in L^2(]0,a^*[, da)$,
	\item $f'\in L^2(]0,a^*[, a\,G_\Omega(a)\,da)$.
\end{enumerate}
\end{dfn}

\begin{prop}\label{propGF}
 The function $f:[0,a^*]\to\mathbb C$ belongs to $\mathcal F_{\Omega}$ if, and only if, $g(t)=f(\mu(t))$ belongs to $\mathcal G_\Omega$.
\end{prop}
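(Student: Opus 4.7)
The plan is to translate separately each of the two conditions in Definition \ref{dfnF} into the corresponding condition in Definition \ref{dfnG}, using only the change of variable formula in Proposition \ref{propChangeVar} together with the fact that $\mu$ and $\mu^{-1}$ are absolutely continuous homeomorphisms with derivatives $-\gamma_\Omega$ and $-1/G_\Omega$ respectively. The first point is that both maps send sets of Lebesgue measure zero to sets of Lebesgue measure zero, so the correspondence $f \mapsto g = f\circ\mu$ and $g \mapsto f = g\circ\mu^{-1}$ is well defined modulo almost-everywhere equality.

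For the $L^2$ conditions I would simply apply Proposition \ref{propChangeVar} to $h(a)=|f(a)|^2$, which yields
\begin{equation*}
	\int_0^{a^*} |f(a)|^2\,da \;=\; \int_0^{t^*} |f(\mu(t))|^2\,\gamma_\Omega(t)\,dt \;=\; \int_0^{t^*} |g(t)|^2\,\gamma_\Omega(t)\,dt,
\end{equation*}
so that the first conditions in Definitions \ref{dfnF} and \ref{dfnG} are equivalent.

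For the derivative conditions I would first establish the chain rule $g'(t) = -\gamma_\Omega(t)\,f'(\mu(t))$ almost everywhere. Since $f$ is assumed to have a locally integrable distributional derivative and $\mu$ is absolutely continuous and strictly monotone, this is a standard (if slightly delicate) fact about absolutely continuous changes of variable; the argument is morally the one used in Appendix \ref{appChainRules} to prove \eqref{eqGrad} and boils down to approximating $f$ by smooth functions on compact subintervals of $]0,a^*[$ and passing to the limit. Once the chain rule is in hand, Proposition \ref{propChangeVar} applied to $h(a) = a\,G_\Omega(a)\,|f'(a)|^2$ gives
\begin{equation*}
\int_0^{a^*} |f'(a)|^2\,a\,G_\Omega(a)\,da = \int_0^{t^*} |f'(\mu(t))|^2\,\mu(t)\,G_\Omega(\mu(t))\,\gamma_\Omega(t)\,dt = \int_0^{t^*} |g'(t)|^2\,\mu(t)\,dt,
\end{equation*}
where in the last step I use $G_\Omega(\mu(t)) = \gamma_\Omega(t)$ and $g'(t)^2 = \gamma_\Omega(t)^2\,f'(\mu(t))^2$. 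The same reasoning run backwards, starting from $g \in \mathcal G_\Omega$ and using $f = g\circ\mu^{-1}$ together with $(\mu^{-1})'(a) = -1/G_\Omega(a)$, handles the reverse implication.

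The only nontrivial step is the justification of the chain rule in the Sobolev/distributional setting, since $f$ and $g$ are not assumed to be smooth and the weights $\gamma_\Omega$ and $G_\Omega$ need not be bounded. Once this is granted, everything else is a direct application of Proposition \ref{propChangeVar}.
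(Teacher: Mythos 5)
Your proposal is correct and follows essentially the same route as the paper: translate the $L^2$ conditions via Proposition \ref{propChangeVar} with $h=|f|^2$, then establish the chain rule $g'(t)=-\gamma_\Omega(t)\,f'(\mu(t))$ (which the paper states as Equation \eqref{eqPrime} and proves by approximation in Appendix \ref{appChainRules}), and translate the weighted derivative conditions with $h(a)=a\,G_\Omega(a)|f'(a)|^2$, using $G_\Omega(\mu(t))=\gamma_\Omega(t)$. You correctly identify the chain rule as the only delicate point, which is exactly the step the paper delegates to the appendix.
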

Before proceeding with the proof, let us point out that the characterization of the functions $f$,  such that the corresponding $u$ is in $H^1$,  is an immediate corollary of Proposition \ref{propGF} and Proposition \ref{propH1}. 
\begin{cor}\label{corH1}
 If $f$ belong to $\mathcal F_\Omega$, the function $u:\Omega\to\mathbb C$, defined by 
 \[u(p)=f(\mu(\psi(p))),\]
 belongs to $H^1(\Omega)$. 
 \end{cor}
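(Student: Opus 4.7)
The plan is simply to compose the two results that have just been established, since the corollary is billed as immediate. Given $f \in \mathcal{F}_\Omega$, I would define $g := f \circ \mu$, so that $g : [0,t^*] \to \mathbb{C}$. Proposition \ref{propGF} states precisely that $f \in \mathcal{F}_\Omega$ if and only if $g \in \mathcal{G}_\Omega$, so in particular this $g$ belongs to $\mathcal{G}_\Omega$.

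Next I would observe that the function $u$ defined in the corollary satisfies, for every $p \in \Omega$,
\[
u(p) = f(\mu(\psi(p))) = (f \circ \mu)(\psi(p)) = g(\psi(p)).
\]
Thus $u$ is exactly the torsion-type function attached to $g$ in the statement of Proposition \ref{propH1}. Applying that proposition directly yields $u \in H^1(\Omega)$, which is the desired conclusion.

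The only point requiring a brief comment is well-definedness modulo null sets, since elements of $\mathcal{F}_\Omega$ and $\mathcal{G}_\Omega$ are only defined up to almost everywhere equality. Changing $f$ on a null set $N \subset [0,a^*]$ changes $g = f \circ \mu$ only on $\mu^{-1}(N)$, which has Lebesgue measure zero in $[0,t^*]$ because $\mu^{-1}$ is absolutely continuous (Section \ref{secArea}); this is consistent with $g$ being defined modulo almost everywhere equality, and Proposition \ref{propH1} handles the further passage to $u$ on $\Omega$. There is no genuine obstacle here: both ingredients have already been proved, and no additional estimate or approximation step is needed.
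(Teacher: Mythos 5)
Your argument is correct and is exactly the paper's: the corollary is obtained by composing Proposition \ref{propGF} (giving $g=f\circ\mu\in\mathcal G_\Omega$) with Proposition \ref{propH1}, and your remark on null sets matches the paper's observation that $\mu$ and $\mu^{-1}$, being absolutely continuous, map sets of measure zero to sets of measure zero. Nothing further is needed.
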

 \begin{proof}[Proof of Proposition \ref{propGF}]  From Proposition \ref{propChangeVar}, with $h(a)=|f(a)|^2$, it follows that 
 \begin{equation}\label{eqL2F}
 \int_0^{a^*}|f(a)|^2\,da=\int_0^{t^*}|g(t)|^2\,\gamma_\Omega(t)\,dt
 \end{equation}
 and therefore that $f$ is in $L^2(]0,a^*[,da)$ if, and only if, $g$ is in $L^2(]0,t^*[,\gamma_\Omega(t)\,dt)$.
 
 In addition, we claim that $f'\in L^1_{\rm loc}([0,a^*[)$ if, and only if, $g'\in L^1_{\rm loc}(]0,t^*])$ and that, in this case, the following identity holds: for almost every $t\in]0,t^*[$,
 \begin{equation}\label{eqPrime}
		g'(t)=-\gamma_\Omega(t)\,f'(\mu(t))
 \end{equation}
(see Appendix \ref{appChainRules} for a proof). Using this, and Proposition \ref{propChangeVar} with $h(a)=|f'(a)|^2\,a\,G_\Omega(a)$, we find
\begin{equation*}
	\int_0^{a^*}|f'(a)|^2\,a\,G_\Omega(a)\,da=\int_0^{t^*}|f'(\mu(t))|^2\gamma_\Omega(t)^2\mu(t)\,dt=\int_0^{t^*}|g'(t)|^2\mu(t)\,dt.
\end{equation*}
We conclude that $f'$ is in $L^2(]0,a^*[,a\,G_\Omega(a)\,da)$ if, and only if, $g'$ is in $L^2(]0,t^*[,\mu(t)\,dt)$.
 \end{proof}
 
We still have to compute the $L^2$-norm of the magnetic gradient in the new variable $a$. Performing the change of variable $a=\mu(t)$ (using Proposition \ref{propChangeVar}) in the right-hand side of Equation \eqref{eqMagnG} and using Identity \eqref{eqPrime}, we find
\begin{align*}
	&\int_\Omega\left|\nabla u(p)-i\beta u(p) A_\Omega(p)\right|^2\,dp\\
	=&\int_0^{t^*}\left(|g'(t)|^2+\beta^2|g(t)|^2\right)\,\mu(t)\,dt\\
			=&\int_0^{t^*}\left(\gamma_\Omega(t)^2|f'(\mu(t))|^2+\beta^2|f(\mu(t))|^2\right)\frac{\mu(t)}{\gamma_\Omega(t)}\gamma_\Omega(t)\,dt\\
	=&\int_0^{a^*}\left(G_\Omega(a)^2|f'(a)|^2+\beta^2 |f(a)|^2\right)\frac{a}{G_\Omega(a)}\,da.
 \end{align*}  
We finally obtain
 \begin{equation}\label{eqMagnF}
 	\int_\Omega\left|\nabla u(p)-i\beta u(p) A_\Omega(p)\right|^2\,dp=\int_0^{a^*}\left(a\,G_\Omega(a)|f'(a)|^2+\frac{\beta^2a}{G_\Omega(a)}|f(a)|^2\right)\,da.
\end{equation}

\subsection{Reduction to a one-dimensional variational problem}

Let us now define 
\begin{equation}
\label{eqKappa}
	\kappa_1(G_\Omega,\beta):=\inf_{f\in\mathcal F_\Omega\setminus\{0\}}\frac{\int_0^{a^*}\left(a\,G_\Omega(a)|f'(a)|^2+\frac{\beta^2a}{G_\Omega(a)}|f(a)|^2\right)\,da}{\int_0^{a^*}|f(a)|^2\,da}.
\end{equation}

\begin{prop}\label{propUpper} For any $\beta\ge0$,
\begin{equation*}
	\lambda_1^N(\Omega,\beta)\le \kappa_1(G_\Omega,\beta).
\end{equation*}
\end{prop}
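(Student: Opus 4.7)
The plan is to apply Rayleigh's principle with trial functions of torsion type. Since $\Omega$ is simply connected, the gauge-invariance property established in Subsection \ref{subsecMagnLap} yields $\lambda_1^N(\Omega,\beta) = \lambda_1^N(\Omega, \beta A_\Omega)$, so I may work throughout with the vector potential $A_\Omega$. The first magnetic Neumann eigenvalue then admits the variational characterization
\[\lambda_1^N(\Omega, \beta A_\Omega) = \inf_{u \in H^1(\Omega) \setminus \{0\}} \frac{\int_\Omega |\nabla u - i \beta u A_\Omega|^2\, dp}{\int_\Omega |u|^2\, dp}.\]

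Given any nonzero $f \in \mathcal{F}_\Omega$, I build the trial function $u(p) := f(\mu(\psi(p)))$. Corollary \ref{corH1} guarantees that $u$ lies in $H^1(\Omega)$, so it is an admissible competitor. The numerator of its Rayleigh quotient has already been computed in Equation \eqref{eqMagnF}, which gives precisely the expression appearing in the definition of $\kappa_1(G_\Omega,\beta)$. For the denominator, I invoke the coarea formula (in the form of Equation \eqref{eqL2t}) to rewrite $\int_\Omega |u|^2\, dp = \int_0^{t^*} |g(t)|^2\, \gamma_\Omega(t)\, dt$ with $g(t) = f(\mu(t))$, and then apply the change-of-variable formula of Proposition \ref{propChangeVar} to conclude that this equals $\int_0^{a^*} |f(a)|^2\, da$ (this is exactly Equation \eqref{eqL2F}). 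Substituting both expressions into the Rayleigh quotient of $u$ and taking the infimum over all nonzero $f \in \mathcal{F}_\Omega$ delivers the desired bound.

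There is no substantive obstacle: the statement is an assembly of results already proved. The only bookkeeping point to check is that $u$ is not the zero element of $L^2(\Omega)$ when $f$ is nonzero in $L^2(]0,a^*[)$; this follows from the identity $\int_\Omega |u|^2\, dp = \int_0^{a^*} |f(a)|^2\, da$ established above, together with the fact (noted in Section \ref{secArea}) that $\mu$ is a homeomorphism of $[0,t^*]$ onto $[0,a^*]$ mapping null sets to null sets, so the pointwise correspondence $f \leftrightarrow u$ is well defined modulo equality almost everywhere.
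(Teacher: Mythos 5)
Your proof is correct and follows essentially the same route as the paper: take $f\in\mathcal F_\Omega\setminus\{0\}$, form the torsion-type function $u(p)=f(\mu(\psi(p)))$, invoke Corollary \ref{corH1} for admissibility, substitute Equations \eqref{eqL2F} and \eqref{eqMagnF} into the Rayleigh quotient, and conclude by Rayleigh's principle. The only differences are presentational — you make explicit the gauge-invariance step and the check that $u\ne 0$, which the paper leaves implicit.
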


\begin{proof} For $f\in\mathcal F_\Omega\setminus\{0\}$, the torsion type function $u(p)=f(\mu(\psi(p)))$ is in $H^1(\Omega)$, according to Corollary \ref{corH1}, and its magnetic Rayleigh quotient satisfies
\begin{equation*}
\frac{\int_\Omega\left|\nabla u(p)-i\beta u(p)A_\Omega(p)\right|^2\,dp}{\int_\Omega |u(p)|^2\,dp}=\frac{\int_0^{a^*}\left(a\,G_\Omega(a)|f'(a)|^2+\frac{\beta^2a}{G_\Omega(a)}|f(a)|^2\right)\,da}{\int_0^{a^*}|f(a)|^2\,da},
\end{equation*}
according to Equations \eqref{eqL2t}, \eqref{eqL2F} and \eqref{eqMagnF}.

Since the torsion-type functions associated with a function in $\mathcal F_\Omega$ form a subspace of $H^1(\Omega)$, it follows from Rayleigh's principle that $\lambda_1^N(\Omega,\beta)\le \kappa_1(G_\Omega,\beta)$.
\end{proof}

\section{Analysis of a Sturm-Liouville problem}
\label{secSL}

\subsection{Generalization of the one-dimensional problem}

To study Problem \eqref{eqKappa}, we disregard its origin from the method of level lines. We consider it as a purely one-dimensional problem and we allow for functions $a\mapsto G(a)$ satisfying only the following assumption.
\begin{assum}\label{assumG}  The function $G:[0,a^*]\to [0,\infty]$ is measurable, and 
\[4\pi\le G(a)<\infty\]
for almost every $a\in [0,a^*]$.
\end{assum} 
We extend Definition \ref{dfnF} to such an arbitrary $G$ and indicate the corresponding subspace of $L^2(]0,a^*[,\mathbb C)$ with $\mathcal F_G$. We extend similarly Problem \eqref{eqKappa} and indicate the infimum with $\kappa_1(G,\beta)$.

We will need the following upper bound in the special case where $G(a)=4\pi$ almost everywhere.

\begin{lem}\label{lemDisk}
	For any $\beta>0$, 
	\begin{equation*}
		\kappa_1(4\pi,\beta)<\beta.
	\end{equation*}
\end{lem}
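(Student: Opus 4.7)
The plan is to exhibit an explicit test function in $\mathcal F_G$ (with $G\equiv 4\pi$) whose magnetic Rayleigh quotient lies strictly below $\beta$. The natural candidate comes from the observation that, for $G\equiv 4\pi$, the variational problem defining $\kappa_1(4\pi,\beta)$ is, via the change of variable $a=\pi r^2$, the radial version of the eigenvalue problem $-v''-\frac{v'}{r}+\frac{\beta^2 r^2}{4}v=\kappa v$ on a disk, i.e.\ Problem \eqref{eqSLPolar} with $n=0$. On the \emph{whole} plane, the Landau Hamiltonian with the gauge $\beta A_S$ admits the Gaussian ground state $v(r)=e^{-\beta r^2/4}$ with eigenvalue $\beta$. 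I will use this Gaussian (which does not satisfy the Neumann condition at the boundary and so is not an eigenfunction on a bounded domain) as a trial function; the boundary term created by integration by parts is what will produce the strict inequality.

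Concretely, I set
\[ f(a):=e^{-\beta a/(4\pi)}. \]
This function is smooth on $[0,a^*]$, so it clearly belongs to $\mathcal F_G$. Using $f'(a)=-\tfrac{\beta}{4\pi}f(a)$, the two parts of the numerator coincide:
\[ a\,G(a)\,|f'(a)|^2 \;=\;\frac{\beta^2 a}{4\pi}\,e^{-\beta a/(2\pi)} \;=\;\frac{\beta^2 a}{G(a)}\,|f(a)|^2, \]
so the numerator of the Rayleigh quotient equals $\int_0^{a^*}\frac{\beta^2 a}{2\pi}\,e^{-\beta a/(2\pi)}\,da$.

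Now comes the single key manipulation: since $\frac{\beta^2 a}{2\pi}\,e^{-\beta a/(2\pi)}=-\beta\,a\,\frac{d}{da}e^{-\beta a/(2\pi)}$, integration by parts gives
\[ \int_0^{a^*}\frac{\beta^2 a}{2\pi}\,e^{-\beta a/(2\pi)}\,da \;=\; \beta\int_0^{a^*}e^{-\beta a/(2\pi)}\,da \;-\; \beta\, a^*\,e^{-\beta a^*/(2\pi)}. \]
Dividing by the denominator $\int_0^{a^*}|f(a)|^2\,da=\int_0^{a^*}e^{-\beta a/(2\pi)}\,da$, I conclude that
\[ \kappa_1(4\pi,\beta)\;\le\;\beta\;-\;\frac{\beta\,a^*\,e^{-\beta a^*/(2\pi)}}{\int_0^{a^*}e^{-\beta a/(2\pi)}\,da}\;<\;\beta, \]
the strict inequality holding because $a^*>0$ and $\beta>0$ make the subtracted term strictly positive. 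There is no real obstacle to this argument; the only care required is the integration by parts and noticing that the boundary term at $a^*$ has the right sign. In particular, the bound holds uniformly for all $\beta>0$ and all values of $a^*=|\Omega|$, with no restriction on the product $\beta a^*$.
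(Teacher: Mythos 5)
Your proof is correct and takes essentially the same approach as the paper: the paper also uses the trial function $f_\beta(a)=\exp\left(-\frac{\beta a}{4\pi}\right)$ and arrives at the identity $\kappa_1(4\pi,\beta)\le\beta+\frac{4\pi a^* f_\beta'(a^*)f_\beta(a^*)}{\int_0^{a^*}|f_\beta|^2\,da}<\beta$, which is exactly your boundary term $-\beta a^* e^{-\beta a^*/(2\pi)}$ from integrating by parts. You have merely spelled out the "direct computation" that the paper leaves implicit.
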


\begin{proof} We use the trial function 
\begin{equation*}
	f_\beta(a):=\exp\left(-\frac{\beta a}{4\pi}\right).
\end{equation*}
A direct computation shows that 
\begin{equation*}
\frac{\int_0^{a^*}\left(4\pi\,a\,|f_\beta'(a)|^2+\frac{\beta^2\,a}{4\pi}|f_\beta(a)|^2\right)\,da}{\int_0^{a^*}|f_\beta(a)|^2\,da}=\beta+\frac{4\pi\,a^*f_\beta'(a^*)f_\beta(a^*)}{\int_0^{a^*}|f_\beta(a)|^2\,da}<\beta.\qedhere
\end{equation*}
\end{proof}

\subsection{Associated Sturm-Liouville problem}

We start the proof by making an additional assumption. 

\begin{assum}\label{assumGBounded}
 The function $G$ is bounded.
\end{assum}

We show in Section \ref{subsecUnbounded} how the assumption can be removed to obtain an upper bound on $\kappa_1(G,\beta)$ without restriction on $G$.

It is easy to check that under Assumption \ref{assumGBounded}, the associated function space $\mathcal F_G$ is actually independent of $G$. Indeed, we prove the following more precise result in Appendix \ref{appSL}.

\begin{lem}\label{lemFBounded}  Let $G$ be a function satisfying Assumptions \ref{assumG} and \ref{assumGBounded}. Then, the sets $\mathcal F_{G}$ and $\mathcal F_{4\pi}$ are equal, meaning that $\mathcal F_G$ consists of the functions $f$ such that 
\begin{enumerate}
	\item $f\in L^2(]0,a^*[, da)$;
	\item $f'\in L^2(]0,a^*[, 4\pi a\,da)$.
\end{enumerate}
In addition, $\mathcal F_{4\pi}$ equipped with the natural norm
\begin{equation*}
	\|f\|_{\mathcal F_{4\pi}}^2:=\int_0^{a^*}|f(a)|^2\,da+4\pi\int_0^{a^*}|f'(a)|^2a\,da
\end{equation*}
 is compactly embedded in $L^2(]0,a^*[, da)$.
\end{lem}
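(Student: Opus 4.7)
The plan is to handle the two assertions of the lemma separately: first the equality of function spaces, then the compactness of the embedding. Part one is essentially a matter of comparing weights, while part two requires a change of variables that reveals a familiar Sobolev space in disguise.

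For the first assertion, under Assumption~\ref{assumGBounded} there exists $M>0$ with $G(a)\le M$ for a.e. $a\in[0,a^*]$, and Assumption~\ref{assumG} gives $G(a)\ge 4\pi$. Consequently
\[
4\pi\,a \;\le\; a\,G(a)\;\le\; M\,a
\]
for a.e. $a\in]0,a^*[$. Hence the weighted $L^2$-spaces $L^2(]0,a^*[,a\,G(a)\,da)$ and $L^2(]0,a^*[,4\pi a\,da)$ coincide as sets with equivalent norms, which is exactly the only difference between the conditions defining $\mathcal F_G$ and $\mathcal F_{4\pi}$ (the $L^2(da)$ condition being common to both). This gives $\mathcal F_G=\mathcal F_{4\pi}$ as sets.

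For the compactness statement, the crucial trick is to perform the change of variable $a=\pi r^2$, $r\in[0,R]$, where $R=\sqrt{a^*/\pi}$. Given $f\in\mathcal F_{4\pi}$, set $F(r):=f(\pi r^2)$, so that $F'(r)=2\pi r\,f'(\pi r^2)$. A direct computation using $da=2\pi r\,dr$ gives
\[
\int_0^{a^*}|f(a)|^2\,da=\int_0^{R}|F(r)|^2\,2\pi r\,dr,\qquad
\int_0^{a^*}4\pi a\,|f'(a)|^2\,da=\int_0^{R}|F'(r)|^2\,2\pi r\,dr.
\]
Thus $f\mapsto u$ with $u(x):=F(|x|)$ is an isometry from $(\mathcal F_{4\pi},\|\cdot\|_{\mathcal F_{4\pi}})$ onto the closed subspace of radial functions in $H^1(B_R)$, where $B_R$ is the disk of area $a^*$, equipped with its standard norm. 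Similarly, the target $L^2(]0,a^*[,da)$ is mapped isometrically onto the radial functions in $L^2(B_R)$. The Rellich--Kondrachov theorem ensures that $H^1(B_R)\hookrightarrow L^2(B_R)$ is compact, and this compactness is preserved when restricted to the closed subspace of radial functions. Transporting back through the isometry yields the desired compact embedding.

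The only real obstacle would be the degeneracy of the weight $a$ at $a=0$, which makes direct Rellich-type arguments on the interval $]0,a^*[$ somewhat delicate. The change of variable $a=\pi r^2$ absorbs this degeneracy by turning the one-dimensional weighted problem into a two-dimensional radial Sobolev problem, where the standard theory applies without any further work. An alternative, more hands-on route (bounding $|f(a)|^2$ via Cauchy--Schwarz on $\int_a^{a'}|f'(s)|\,ds$ and then averaging in $a'$ to obtain a uniform bound on $\int_0^\varepsilon|f|^2\,da$ for bounded families) is available but seems less transparent than the geometric identification above.
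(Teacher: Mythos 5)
Your proof is correct and follows essentially the same approach as the paper: the set equality is obtained by comparing the weights $4\pi a$ and $aG(a)$ via $4\pi\le G\le M$, and the compact embedding is obtained through the change of variables $a=\pi r^2$ identifying $\mathcal F_{4\pi}$ isometrically with the radial subspace of $H^1(B_R)$, followed by Rellich's theorem. The paper even remarks explicitly (after its proof) that this map $f\mapsto u$ coincides with the level-line map of Corollary~\ref{corH1} applied to the disk.
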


Standard arguments from functional analysis and spectral theory (see Appendix \ref{appSL}) give the following result.

\begin{prop}\label{propMinSL} Let $a\mapsto G(a)$ be a function  satisfying Assumptions \ref{assumG} and \ref{assumGBounded}. Then, the infimum $\kappa_1(G,\beta)$ of Problem \eqref{eqKappa} is a minimum, and coincides with the first eigenvalue of the following Sturm-Liouville problem:
\begin{equation}
\label{eqSL}
	\left\{\begin{array}{rl}
		-\left(P(a)f'\right)'+\beta^2Q(a)f=&\kappa f\mbox{ in }]0,a^*[,\\
		\lim_{a\to0}P(a)f'(a)=&0,\\
		P(a^*)f'(a^*)=&0,
	\end{array}
\right.	
\end{equation}
where 
\begin{equation*}
P(a):=a\,G(a) \mbox{\ and\ } Q(a):=\frac{a}{G(a)}.
\end{equation*}
\end{prop}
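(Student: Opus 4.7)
The plan is to realize $\kappa_1(G,\beta)$ as the bottom eigenvalue of the self-adjoint operator associated with the numerator of the Rayleigh quotient in \eqref{eqKappa} via the Friedrichs extension, and then translate the resulting weak Euler-Lagrange equation into the strong Sturm-Liouville form \eqref{eqSL}. This is classical singular Sturm-Liouville theory, so the technical work lies mainly in checking that Assumptions \ref{assumG} and \ref{assumGBounded} put us squarely in the standard framework.

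First, I would set up the quadratic form
\[
\mathcal Q(f,g):=\int_0^{a^*}\!\! a\,G(a)\,\overline{f'(a)}\,g'(a)\,da+\int_0^{a^*}\!\!\frac{\beta^2 a}{G(a)}\,\overline{f(a)}\,g(a)\,da
\]
on the domain $\mathcal F_G$. By Lemma \ref{lemFBounded}, $\mathcal F_G=\mathcal F_{4\pi}$ as sets, and the double inequality $4\pi\le G(a)\le\|G\|_\infty$ yields constants $c,C>0$ with
\[
c\,\|f\|_{\mathcal F_{4\pi}}^2\le \mathcal Q(f,f)+\|f\|_{L^2}^2\le C\,\|f\|_{\mathcal F_{4\pi}}^2.
\]
Hence $\mathcal Q$ is densely defined, symmetric, closed and bounded below on $L^2(]0,a^*[,da)$. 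The first representation theorem produces a unique self-adjoint operator $T_{G,\beta}$ associated with $\mathcal Q$, and the compact embedding $\mathcal F_{4\pi}\hookrightarrow L^2$ from Lemma \ref{lemFBounded} guarantees that $T_{G,\beta}$ has compact resolvent. Its spectrum is therefore a nondecreasing sequence $\kappa_1\le\kappa_2\le\cdots\to+\infty$, and the min-max principle identifies $\kappa_1$ with the variational infimum $\kappa_1(G,\beta)$ of \eqref{eqKappa}, which is attained by some minimizer $f_1$.

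Second, I would extract the differential equation from the Euler-Lagrange identity
\[
\mathcal Q(f_1,\phi)=\kappa_1\int_0^{a^*}\overline{f_1}\,\phi\,da,\qquad\phi\in\mathcal F_G.
\]
Testing against $\phi\in C_c^\infty(]0,a^*[)$ shows that $Pf_1'$ has distributional derivative $(\beta^2 Q-\kappa_1)f_1$ on $]0,a^*[$; since $Q$ is bounded and $f_1\in L^2$, this derivative lies in $L^2$, so $Pf_1'\in H^1(]0,a^*[)$ and in particular extends continuously to the closed interval $[0,a^*]$. The equation in \eqref{eqSL} then holds almost everywhere.

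Finally, I would recover the boundary conditions by testing against $\phi\in C^1([0,a^*])\subset\mathcal F_G$. Substituting the a.e.\ equation back into the weak identity and integrating by parts collapses the bulk contributions and leaves
\[
P(a^*)f_1'(a^*)\,\overline{\phi(a^*)}-\lim_{a\to 0}P(a)f_1'(a)\,\overline{\phi(0)}=0
\]
for every such $\phi$. Choosing $\phi$ with $\phi(a^*)=1,\ \phi(0)=0$ yields $P(a^*)f_1'(a^*)=0$; choosing $\phi$ with $\phi(0)=1,\ \phi(a^*)=0$ yields $\lim_{a\to 0}P(a)f_1'(a)=0$. The main delicate point is the singular endpoint $a=0$: generic elements of $\mathcal F_G$ need not admit a trace there, but (i) $Pf_1'$ itself does have a limit at $0$ thanks to the $H^1$-regularity established above, and (ii) the smooth subclass $C^1([0,a^*])$ already supplies enough test functions to probe arbitrary pairs of boundary values, which is all that is needed to conclude.
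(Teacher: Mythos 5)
Your proposal follows the same route as the paper's proof in Appendix C: set up the closed, non-negative quadratic form on $\mathcal F_{4\pi}$, invoke the representation theorem together with the compact embedding from Lemma \ref{lemFBounded} to obtain a self-adjoint operator with discrete spectrum and identify $\kappa_1$ via the min-max principle, then test the weak eigenvalue equation first against $C^\infty_c(]0,a^*[)$ to obtain the distributional ODE (showing $Pf_1'$ is absolutely continuous) and then against $C^1([0,a^*])$ to read off the boundary conditions. The reasoning, including the handling of the singular endpoint $a=0$ through the regularity of the quasi-derivative $Pf_1'$, matches the paper's argument.
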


\subsection{Properties of eigenvalues and eigenfunctions}

We recall some properties of the Sturm-Liouville problem \eqref{eqSL}. Those are standard in the case where the coefficient functions $Q$ and $P$ are regular, that is, $Q$ continuous, $P$ continuously differentiable and positive in $[0,a^*]$. They can be shown to hold in our case also. For the reader's convenience, we provide proofs in Appendix \ref{appSL}.
\begin{prop}\label{propSL}		The Sturm-Liouville problem \eqref{eqSL}, dependent on a function $G$, satisfying Assumptions \ref{assumG} and \ref{assumGBounded}, and on a parameter $\beta>0$, has the following properties.
\begin{enumerate}
	\item There exists an orthonormal basis $(f_k)_{k\ge1}$ of  $L^2(]0,a^*[,da)$, consisting of eigenfunctions for Problem \eqref{eqSL}, each associated with an eigenvalue $\kappa_k(G,\beta)$, such that $(\kappa_k(G,\beta))_{k\ge1}$ is a non-decreasing sequence going to $+\infty$.
	\item The eigenvalues $\kappa_k(G,\beta)$ are all simple.
	\item  The eigenfunctions $f_k$ are locally absolutely continuous in $]0,a^*]$. In addition, their so-called \emph{quasi-derivatives}, defined as \[f_k^{[1]}:a\mapsto P(a)f_k'(a)\]
	are  absolutely continuous in $[0,a^*]$.
	\item The eigenfunction $f_1$ can be chosen positive in $[0,a^*]$. In addition, $f_1$ is absolutely continuous in $[0,a^*]$. 	
\end{enumerate}
\end{prop}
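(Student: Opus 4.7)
The plan is to adapt the classical treatment of regular Sturm--Liouville problems to our singular-endpoint setting at $a=0$, exploiting Lemma~\ref{lemFBounded} to work on a fixed Hilbert space.

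For (1), I would consider the Hermitian sesquilinear form
\[B(f,g):=\int_0^{a^*}\bigl(P(a)\,f'(a)\,\overline{g'(a)}+\beta^2 Q(a)\,f(a)\,\overline{g(a)}\bigr)\,da\]
on $\mathcal F_{4\pi}$. The bounds $4\pi a\le P(a)\le \|G\|_\infty\,a$ and $0\le Q(a)\le a/(4\pi)$, coming from Assumptions~\ref{assumG} and~\ref{assumGBounded}, make $B$ continuous and, after a shift by a suitable multiple of $\|\cdot\|_{L^2}^2$, coercive with respect to the natural norm of $\mathcal F_{4\pi}$. Combining the Friedrichs construction with the compact embedding $\mathcal F_{4\pi}\hookrightarrow L^2(]0,a^*[,da)$ of Lemma~\ref{lemFBounded}, I obtain a self-adjoint operator with compact resolvent; its spectrum yields the claims in (1), including the identification of $\kappa_1(G,\beta)$ with the infimum in~\eqref{eqKappa}.

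For (3), I would recover the differential equation and the boundary conditions from the weak formulation $B(f_k,v)=\kappa_k(G,\beta)\,\langle f_k,v\rangle_{L^2}$. Inserting compactly supported test functions $v$ and integrating by parts identifies $-(Pf_k')'+\beta^2 Q f_k=\kappa_k f_k$ in the sense of distributions on $]0,a^*[$, and since the right-hand side lies in $L^2\subset L^1_{\mathrm{loc}}$, the quasi-derivative $f_k^{[1]}=Pf_k'$ is absolutely continuous on $[0,a^*]$. The two endpoint conditions in~\eqref{eqSL} are then recovered by choosing test functions that do not vanish at the endpoints and collecting the remaining boundary terms. Since $P(a)\ge 4\pi a>0$ on $]0,a^*]$, $f_k'=f_k^{[1]}/P$ is locally integrable there, so $f_k$ is locally absolutely continuous on $]0,a^*]$.

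For (2), I would use the Wronskian $W:=f^{[1]}g-fg^{[1]}$ of two eigenfunctions sharing the same $\kappa$: by (3), $W$ is absolutely continuous, and a direct computation using the ODE shows $W'=0$ almost everywhere, so $W$ is constant; the endpoint condition at $a^*$ then forces $W\equiv 0$. Viewing~\eqref{eqSL} as the first-order Carathéodory system $(f,f^{[1]})'=(f^{[1]}/P,\,(\beta^2 Q-\kappa)f)$, whose coefficients lie in $L^\infty([a_1,a_2])$ for every $[a_1,a_2]\subset\,]0,a^*]$, the uniqueness theorem for such systems forces $(f,f^{[1]})$ and $(g,g^{[1]})$ to be proportional. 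For (4), since the coefficients are real, $f_1$ may be chosen real-valued; then $|f_1|\in\mathcal F_{4\pi}$ shares its Rayleigh quotient and is itself a first eigenfunction, so by (2) $f_1$ has constant sign, and we may assume $f_1\ge 0$. Interior vanishing of $f_1$ is ruled out by the same Carathéodory uniqueness, and vanishing at $a=0$ is excluded by a Gronwall argument applied to the integral form of the ODE with initial data $f_1^{[1]}(0)=0$: the bound $|f_1'(a)|\le C\int_0^a f_1(s)\,ds/(4\pi a)$ yields a linear integral inequality for $a\mapsto\int_0^a f_1(r)\,dr/a$ which, together with $f_1(0)=0$, forces $f_1\equiv 0$. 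Finally, $Q\in L^\infty$ and Cauchy--Schwarz give $|f_1^{[1]}(a)|\le C a^{1/2}$ near $0$, hence $|f_1'(a)|\le C a^{-1/2}\in L^1([0,a^*])$, which yields absolute continuity of $f_1$ on the closed interval. The main technical obstacle throughout is the singularity at $a=0$: care is required to verify that $\lim_{a\to 0}P(a)f'(a)=0$ is genuinely imposed by the weak formulation and is strong enough to combine with the endpoint condition at $a^*$ in the Wronskian computation.
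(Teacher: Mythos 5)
Your proposal is correct and follows essentially the same route as the paper: quadratic form plus the compact embedding of Lemma~\ref{lemFBounded} for (1), recovery of the differential equation and boundary conditions from the weak formulation for (3), Carath\'eodory uniqueness on $]0,a^*]$ (with the Wronskian) for (2), and the $|f_1|$ trick for the sign of $f_1$ in (4). The only place you genuinely diverge is the endpoint analysis at $a=0$ in (4). The paper observes that $\beta^2Q(a)-\kappa_1<0$ near $0$, hence $f_1^{[1]}<0$ and $f_1$ is decreasing there, so $\lim_{a\to 0}f_1(a)$ exists in $]0,+\infty]$; it then runs a two-step logarithmic bootstrap ($|f_1'|\le C/a \Rightarrow f_1=O(|\log a|) \Rightarrow f_1^{[1]}=O(a|\log a|) \Rightarrow f_1'=O(|\log a|)\in L^1$) to show the limit is finite and $f_1$ is absolutely continuous up to $0$. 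You instead get $|f_1^{[1]}(a)|\le Ca^{1/2}$ in one shot from Cauchy--Schwarz (using $f_1\in L^2$, $Q\in L^\infty$, and $f_1^{[1]}(0)=0$), hence $|f_1'|\lesssim a^{-1/2}\in L^1$ and absolute continuity, and then rule out $f_1(0)=0$ by a Gronwall argument. Both are valid; your Cauchy--Schwarz step is arguably cleaner than the bootstrap, while the paper's monotonicity argument reaches $f_1(0)>0$ more directly than a Gronwall contradiction. One presentational point: you invoke the Gronwall argument (which presupposes that $f_1(0)$ makes sense) before establishing the absolute continuity of $f_1$ on the closed interval; these two steps should be swapped so that the continuous extension of $f_1$ to $a=0$ is in place first.
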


\begin{rem} \label{remReal} Let us note that, since the coefficient functions $P$ and $Q$ are real-valued, the Sturm-Liouville operator in Problem \eqref{eqSL} maps real-valued functions to real-valued functions. It follows that we can chose the basis $(f_k)$ so that all its functions are real-valued. In the rest of the paper, we assume that this is the case.
\end{rem}
In the rest of this section, we focus on the first eigenvalue $\kappa_1(G,\beta)$. Our goal is to show monotonicity of $\kappa_1(\beta,G)$ with respect to $G$.

\begin{prop}\label{propMonotonicity}
	Let $G_0$ and $G_1$ be two functions satisfying Assumptions \ref{assumG} and \ref{assumGBounded} and let $\beta>0$. Let us also assume that
\begin{enumerate} 
\item $G_0(a)\le G_1(a)$ almost everywhere,
\item $G_0(a)<G_1(a)$ in a set of positive measure.
\end{enumerate}
Then $\kappa_1(G_0,\beta)>\kappa_1(G_1,\beta)$.
\end{prop}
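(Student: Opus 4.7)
The plan is to interpolate linearly between $G_0$ and $G_1$ and study how the first eigenvalue varies along the path. For $z \in [0,1]$ I would set
\[ G_z(a) := (1-z)\, G_0(a) + z\, G_1(a). \]
Since $G_0$ and $G_1$ both satisfy Assumptions \ref{assumG} and \ref{assumGBounded}, so does $G_z$ uniformly in $z$, with the same lower bound $4\pi$ and with $\|G_z\|_\infty \le \max(\|G_0\|_\infty,\|G_1\|_\infty)$. Consequently Proposition \ref{propMinSL} together with Lemma \ref{lemFBounded} apply simultaneously for all $z$: the space $\mathcal F_{G_z}$ coincides with $\mathcal F_{4\pi}$, and $\kappa(z):=\kappa_1(G_z,\beta)$ is a simple eigenvalue attained by a positive normalized eigenfunction $f_z$.

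The first step is to establish that $z\mapsto \kappa(z)$ is differentiable on $[0,1]$ and to derive the Feynman-Hellmann-type identity
\[
\kappa'(z)=\int_0^{a^*}\left(|a\, G_z(a)\, f_z'(a)|^2-\beta^2 a^2|f_z(a)|^2\right)\frac{\delta G(a)}{a\, G_z(a)^2}\,da,
\]
where $\delta G := G_1-G_0\ge 0$. The standard route is to differentiate the Rayleigh quotient characterization of $\kappa(z)$ in $z$, and observe that the contributions from differentiating $f_z$ cancel, because $f_z$ is a critical point of the Rayleigh quotient. This is the content of Proposition \ref{propDerivative}, whose detailed verification in the non-smooth coefficient setting I would defer to the next subsection.

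The second step is to show that $\kappa'(z)<0$ for every $z\in[0,1]$. The factor $\delta G/(a\, G_z^2)$ is non-negative, and is strictly positive on the set $\{\delta G>0\}$, which has positive measure by assumption. So it suffices to verify that the bracket $|a\, G_z\, f_z'|^2-\beta^2 a^2|f_z|^2$ is non-positive almost everywhere and strictly negative on a subset of positive measure. The pointwise control $|G_z(a)\, f_z'(a)|\le \beta\, |f_z(a)|$, established by an analysis of the eigenvalue equation in Proposition \ref{propBounds}, delivers exactly this. Since $f_z$ is continuous and strictly positive on $[0,a^*]$ by Proposition \ref{propSL}(4), the integrand is then strictly negative on a set of positive measure, so $\kappa'(z)<0$. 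The fundamental theorem of calculus then yields $\kappa_1(G_0,\beta)=\kappa(0)>\kappa(1)=\kappa_1(G_1,\beta)$, as desired.

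I expect the main obstacle to be the pointwise bound for $|G_z f_z'|$: the coefficient $P(a)=a\, G_z(a)$ degenerates at the left endpoint, $G_z$ is only $L^\infty$, and one has to exploit both the boundary condition $\lim_{a\to 0}P(a)f_z'(a)=0$ and the positivity of $f_z$. A natural approach is a Riccati-type substitution $w_z := G_z f_z'/f_z$, for which the eigenvalue equation should yield an inequality forcing $|w_z|\le\beta$ by a bootstrapping argument starting from $a=0$. The differentiability of $\kappa$ and the derivation of the Feynman-Hellmann identity are conceptually routine once the eigenfunction regularity from Proposition \ref{propSL} is available, but also require care because the perturbation lives only in $L^\infty$.
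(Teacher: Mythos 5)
Your plan is, in outline, the same as the paper's: interpolate linearly via $G_z:=(1-z)G_0+zG_1$, differentiate $\kappa(z):=\kappa_1(G_z,\beta)$ by a Feynman--Hellmann identity (the paper justifies differentiability by invoking Kato's theory of analytic families of forms of type (a)), and establish the sign of the resulting integrand by a Riccati-type analysis of the eigenvalue equation. There is, however, a genuine gap that your sketch does not address.

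The pointwise bound $|G_z(a)f_z'(a)|<\beta|f_z(a)|$ --- equivalently, writing $X_z=f_z$, $Y_z=P_zf_z'$ and $R_z=Y_z/X_z$, the bound $|R_z(a)|<\beta a$ --- cannot be established unconditionally, and your Riccati bootstrapping from $a=0$ does not get off the ground without an a priori restriction on $\kappa(z)$. In the equation $R_z'=\beta^2Q_z-\kappa(z)-R_z^2/P_z$, the comparison functions $Z_\pm(a)=\pm\beta a$ satisfy $Z_\pm'=\pm\beta$ while the right-hand side evaluated along $Z_\pm$ is identically $-\kappa(z)$; moreover the boundary data force $R_z(0)=0$ with $R_z'(0^+)=-\kappa(z)$. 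For $R_z$ to enter the strip bounded by $Z_-$ and $Z_+$ at $a=0^+$ you need $-\beta<-\kappa(z)<\beta$, i.e.\ $0<\kappa(z)<\beta$. If $\kappa(z)\ge\beta$, then $R_z(a)\le Z_-(a)$ near $0$ and the enclosure fails at the first step. Since $\kappa(z)<\beta$ is precisely the kind of information one would like to \emph{conclude} from monotonicity rather than assume, your argument as written is circular. The paper breaks this circle with an auxiliary indirect argument: Lemma \ref{lemDisk} checks directly that $\kappa_1(4\pi,\beta)<\beta$ by testing with $f_\beta(a)=\exp(-\beta a/(4\pi))$, and Lemma \ref{lemAPrioriBound} then interpolates from $G\equiv 4\pi$ to an arbitrary admissible $G$, using continuity of $\kappa(\cdot)$ and a contradiction argument, to deduce that $\kappa_1(G,\beta)<\beta$ holds for every $G$ satisfying Assumptions \ref{assumG} and \ref{assumGBounded}. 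Only once that a priori bound is in hand does the conditional statement of Proposition \ref{propBounds} apply along your whole path $z\in[0,1]$, so that $\kappa'(z)<0$ throughout and the fundamental theorem of calculus finishes the proof. Without that additional step your proposal does not close. (A minor further imprecision: you state the pointwise control with a non-strict inequality $\le$, but the strict inequality is what is actually proved and what is needed to conclude $\kappa'(z)<0$ directly from the sign of the integrand.)
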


We reduce the proof of Proposition \ref{propMonotonicity} to the computation of a derivative. To set this up, we define, for $z\in[0,1]$, the convex combination
\begin{equation}
\label{eqGz}
 G_z(a):=(1-z)G_0(a)+zG_1(a).
\end{equation}
Let us note that $G_z$ also satisfies Assumptions \ref{assumG} and \ref{assumGBounded}. To simplify notation, we write
\begin{equation*}
	\kappa(z):=\kappa_1(G_z,\beta)
\end{equation*}
and we denote by $f_z$ an associated eigenfunction, which we choose positive and $L^2$-normalized. We denote by $P_z$ and $Q_z$ the coefficient functions associated with $G_z$ and by $q_z$ the quadratic form
\begin{equation}
\label{eqQZ}
	q_z(f):=\int_0^{a^*}\left(P_z(a)|f'(a)|^2+\beta^2Q_z(a)|f(a)|^2\right)\,da.
\end{equation}

\begin{prop}\label{propDerivative}
The function $z\mapsto\kappa(z)$ is differentiable in $[0,1]$, and for all $z\in]0,1]$,
\begin{equation}\label{eqFH}
	\kappa'(z)=\frac{d}{dw}_{|w=z}q_w(f_z).
\end{equation}
\end{prop}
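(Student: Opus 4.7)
The plan is to establish a two-sided sandwich for the difference quotient $(\kappa(w) - \kappa(z))/(w-z)$ using the variational characterization of $\kappa$, and to evaluate both ends of the sandwich in the limit $w \to z$ by exploiting continuity of the eigenfunction $f_w$ in $w$.

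First, since $G_w$ satisfies Assumptions \ref{assumG} and \ref{assumGBounded} uniformly in $w \in [0,1]$, Lemma \ref{lemFBounded} implies $\mathcal{F}_{G_w} = \mathcal{F}_{4\pi}$ for every $w$, so $f_z$ and $f_w$ are both admissible unit-norm test functions for $q_w$ and $q_z$ alike. Rayleigh's principle yields $q_w(f_z) \ge \kappa(w)$ and $q_z(f_w) \ge \kappa(z)$, which rearrange to the sandwich
\[
q_w(f_w) - q_z(f_w) \,\le\, \kappa(w) - \kappa(z) \,\le\, q_w(f_z) - q_z(f_z).
\]
A direct computation using $P_w = a\,G_w$ and $\frac{1}{G_w} - \frac{1}{G_z} = -\frac{(w-z)\,\delta G}{G_w G_z}$, with $\delta G := G_1 - G_0$, gives, for any admissible $f$,
\[
\frac{q_w(f) - q_z(f)}{w-z} = \int_0^{a^*}\left(a\,\delta G\,|f'|^2 \;-\; \beta^2\,\frac{a\,\delta G}{G_w\,G_z}\,|f|^2\right)da.
\]
Applied with $f = f_z$, this has a limit as $w \to z$ equal to $\frac{d}{dw}\big|_{w=z} q_w(f_z)$, by dominated convergence (the integrand is bounded by an integrable quantity independent of $w$, thanks to $4\pi \le G_w, G_z$ and the boundedness of $\delta G$). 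Hence the upper side of the sandwich already converges to the candidate derivative.

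Next, the lower side of the sandwich requires norm continuity of $w \mapsto f_w$ in $\mathcal{F}_{4\pi}$, which I would establish in four substeps: (i) $\{f_w\}$ is bounded in $\mathcal{F}_{4\pi}$, since $4\pi\int_0^{a^*} a|f_w'|^2\,da \le \int_0^{a^*} a\,G_w|f_w'|^2\,da \le q_w(f_w) = \kappa(w)$, and the sandwich applied with $\|f_z\|=\|f_w\|=1$ shows that $\kappa$ is Lipschitz on $[0,1]$; (ii) by the compact embedding of Lemma \ref{lemFBounded}, any sequence $w_n \to z$ admits a subsequence along which $f_{w_n}$ converges weakly in $\mathcal{F}_{4\pi}$ and strongly in $L^2$ to a unit-norm limit $\tilde{f}$; (iii) weak lower semicontinuity of $q_z$ combined with $q_{w_n}(f_{w_n}) = \kappa(w_n) \to \kappa(z)$ forces $\tilde{f}$ to be a first eigenfunction of $q_z$, hence $\tilde{f} = f_z$ by simplicity (Proposition \ref{propSL}(2)) and the positivity normalization; (iv) splitting $\int a\,G_z|f_w'|^2\,da = \int a\,G_w|f_w'|^2\,da + \int a(G_z - G_w)|f_w'|^2\,da$ and passing to the limit (using $q_w(f_w) \to q_z(f_z)$, strong $L^2$-convergence of $f_w$ to $f_z$, and $\|G_w - G_z\|_\infty \to 0$) yields $\int a\,G_z|f_w'|^2\,da \to \int a\,G_z|f_z'|^2\,da$; combined with the weak convergence of $f_w'$ in the weighted space, this upgrades to strong convergence of $\sqrt{a}\,f_w'$ in $L^2$, i.e.\ norm convergence of $f_w$ to $f_z$ in $\mathcal{F}_{4\pi}$.

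With this norm continuity in hand, the lower side of the sandwich also converges to $\frac{d}{dw}\big|_{w=z} q_w(f_z)$: the factor $a\,\delta G$ is bounded, so strong convergence of $\sqrt{a}\,f_w'$ gives $\int a\,\delta G |f_w'|^2 \to \int a\,\delta G |f_z'|^2$, and the term involving $|f_w|^2$ converges by strong $L^2$-convergence and dominated convergence. The sandwich then closes at the common value from both sides, and the symmetric argument for $w < z$ handles the left derivative. This proves both that $z \mapsto \kappa(z)$ is differentiable on $[0,1]$ and that $\kappa'(z) = \frac{d}{dw}\big|_{w=z} q_w(f_z)$, which is \eqref{eqFH}. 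The main obstacle is step (iv), the upgrade from weak to strong convergence of $f_w'$ in the weighted $L^2$-space: this is what separates a mere continuity statement from the sharp derivative formula, since the lower side of the sandwich depends on a quantity that is not continuous under weak convergence alone.
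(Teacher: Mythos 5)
Your argument is correct and it is genuinely different from the paper's. The paper proves Proposition~\ref{propDerivative} in one sentence by invoking Kato's perturbation theory: it observes that $z\mapsto q_z$ is a holomorphic family of forms of type (a), appeals to \cite[VII-\S 4]{Kato19995Book} for analyticity of $z\mapsto\kappa_k(G_z,\beta)$, and cites formula~(4.56) there for the Feynman--Hellmann identity. You instead give a self-contained elementary proof: the two-sided Rayleigh sandwich $q_w(f_w)-q_z(f_w)\le\kappa(w)-\kappa(z)\le q_w(f_z)-q_z(f_z)$, an explicit computation of the difference quotient, dominated convergence for the easy (upper) side, and a compactness plus weak-to-strong-convergence argument (via convergence of the weighted Dirichlet integral together with weak convergence, the standard ``weak $+$ norm $\Rightarrow$ strong'' upgrade) to handle the lower side. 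The price is that you only obtain $C^1$ dependence rather than Kato's analyticity, but analyticity is not used elsewhere in the paper; what you gain is a transparent proof using only tools already established (Lemma~\ref{lemFBounded} for the compact embedding, simplicity of $\kappa_1$ from Proposition~\ref{propSL}(2)), with the key insight correctly located at the weak-to-strong upgrade in step (iv). For a truly airtight write-up you should make explicit that the subsequence limit is unique (hence the full net $f_w\to f_z$ converges), that $\tilde f\ge 0$ a.e.\ follows from a.e.\ convergence of a subsequence of the positive functions $f_{w_n}$, and that the operator of multiplication by the bounded function $\sqrt{\delta G/G_z}$ carries the strong $L^2$-convergence of $\sqrt{aG_z}\,f_w'$ to that of $\sqrt{a\,\delta G}\,f_w'$, but these are routine and your plan already identifies where the real work lies.
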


Equation \eqref{eqFH} is sometimes called the Feynman-Hellmann formula and holds in a variety of settings. Here, it can be justified by the fact that, in the terminology of T.~Kato, $q_z$ is an analytic family of forms of type $(a)$ with respect to the parameter $z$, so that each of the mappings $z\mapsto \kappa_k(G_z,\beta)$ is analytic in a domain of the complex plane containing the real segment $[0,1]$. The general theory can be found in \cite[VII-\S\,4]{Kato19995Book}, where the formula appears as (4.56) on page 408.

To find a more explicit formula, we differentiate with respect to $w$ under the integral sign in the expression of $q_w(f_z)$. We have
\begin{align*}	
	\partial_w P_w(a)=&a\delta G(a),\\
	\partial_w Q_w(a)=&-a\frac{\delta G(a)}{G_w(a)^2},
\end{align*}
where 
\begin{equation*}
	\delta G(a):=G_1(a)-G_0(a).
\end{equation*}
Using Proposition \ref{propDerivative}, we obtain
\begin{align*}
	\kappa'(z)
					=&\int_0^{a^*}\left(a\delta G(a)|f_z'(a)|^2-\beta^2a\frac{\delta G(a)}{G_z(a)^2}|f_z(a)|^2\right)\,da\\
					=&\int_0^{a^*}\left(|P_z(a)f_z'(a)|^2-\beta^2a^2|f_z(a)|^2\right)\frac{\delta G(a)}{aG_z(a)^2}\,da.\\
\end{align*}
Using the notation
\begin{align*}
	X_z(a):=&f_z(a),\\
	Y_z(a):=&f_z^{[1]}(a)=P_z(a)f_z'(a),
\end{align*}
we can summarize the previous results as
\begin{equation}\label{eqDerivativeXY}
\kappa'(z)=\int_0^{a^*}\left(|Y_z(a)|^2-\beta^2a^2|X_z(a)|^2\right)\frac{\delta G(a)}{aG_z(a)^2}\,da.
\end{equation}

Monotonicity is then a consequence of the following property.
\begin{prop}\label{propBounds} For any $z\in[0,1]$ such that $\kappa(z)<\beta$, 
\[|Y_z(a)|^2-\beta^2a^2|X_z(a)|^2<0\]
 for all $a\in]0,a^*[$.
\end{prop}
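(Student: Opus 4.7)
The plan is to factor $W_z(a) := Y_z(a)^2 - \beta^2 a^2 X_z(a)^2$ as $U_z(a) V_z(a)$, with
\[U_z(a) := Y_z(a) - \beta a X_z(a), \qquad V_z(a) := Y_z(a) + \beta a X_z(a).\]
Since the first eigenfunction is strictly positive on $[0, a^*]$ by Proposition \ref{propSL}(4), the claim $W_z < 0$ on $]0, a^*[$ is equivalent to proving $V_z > 0$ and $U_z < 0$ on that interval.

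First, I would derive first-order linear ODEs for $U_z$ and $V_z$. The Sturm-Liouville equation together with the definition $Y_z = P_z f_z'$ gives $Y_z'(a) = (\beta^2 a/G_z(a) - \kappa(z))\, X_z(a)$ and $X_z'(a) = Y_z(a)/(a G_z(a))$ almost everywhere, from which a direct computation yields
\begin{align*}
V_z'(a) &= (\beta/G_z(a))\, V_z(a) + (\beta - \kappa(z))\, X_z(a),\\
U_z'(a) &= -(\beta/G_z(a))\, U_z(a) - (\beta + \kappa(z))\, X_z(a).
\end{align*}
The boundary condition $\lim_{a\to 0} Y_z(a) = 0$ from Proposition \ref{propSL}(3), together with the absolute continuity of $Y_z$ on $[0, a^*]$, gives $Y_z(0) = 0$, hence $U_z(0) = V_z(0) = 0$.

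The key step is the use of integrating factors. Set $\mu(a) := \exp\bigl(-\int_0^a (\beta/G_z(s))\,ds\bigr)$ and $\nu := 1/\mu$; both are absolutely continuous and strictly positive, since $\beta/G_z \in L^\infty$ by Assumption \ref{assumG} (with bound $\beta/(4\pi)$). The product rule then gives
\[(V_z \mu)'(a) = (\beta - \kappa(z))\, X_z(a)\, \mu(a), \qquad (U_z \nu)'(a) = -(\beta + \kappa(z))\, X_z(a)\, \nu(a),\]
almost everywhere. Since $\kappa(z) < \beta$ by hypothesis, $\kappa(z) \ge 0$ (the operator in Problem \eqref{eqSL} being non-negative), and $X_z > 0$ on $[0, a^*]$, the right-hand sides are strictly positive and strictly negative respectively. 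Integrating from $0$ and using $(V_z \mu)(0) = (U_z \nu)(0) = 0$, we conclude $V_z \mu > 0$ and $U_z \nu < 0$ on $]0, a^*]$, and dividing by the positive factors $\mu, \nu$ gives $V_z > 0$ and $U_z < 0$, hence $W_z = U_z V_z < 0$.

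The main obstacle is purely technical: one must verify that the integrating factor manipulations and the product rule remain valid under the limited regularity provided by Proposition \ref{propSL}. Since $\mu$ and $\nu$ are absolutely continuous (from boundedness of $\beta/G_z$) and $X_z, Y_z$ are absolutely continuous on $[0, a^*]$, the products $V_z \mu$ and $U_z \nu$ are absolutely continuous with derivatives computed by the usual product rule almost everywhere; this legitimizes the above argument.
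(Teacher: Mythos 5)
Your proof is correct, and it takes a genuinely different route from the one in the paper. The paper passes to the ratio $R_z=Y_z/X_z$, shows that it satisfies a nonlinear Riccati equation, and then proves $|R_z(a)|<\beta a$ by a sub/supersolution comparison argument (the functions $\pm\beta a$ are strict super/subsolutions). Because the coefficient $1/P_z(a)$ is only locally integrable near $a=0$, this comparison has to be carried out rather carefully, by hand, in two cases (see the proof of Lemma~\ref{lemComparison}). Your approach instead factors $Y_z^2-\beta^2 a^2 X_z^2=U_zV_z$ and derives \emph{linear} first-order ODEs for $U_z$ and $V_z$ whose coefficient $\beta/G_z$ is bounded; the desired sign information then drops out of an integrating-factor computation, with the boundary condition $Y_z(0)=0$ as the only input at the left endpoint. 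The two decompositions are of course related ($V_z/X_z=R_z+\beta a$, $U_z/X_z=R_z-\beta a$), but working with the linear system avoids the Riccati nonlinearity entirely and, crucially, avoids the singular coefficient $1/P_z$, so the regularity bookkeeping is lighter. The computations check out: the ODEs for $U_z,V_z$ are correctly derived from the first-order system, $\mu$ and $\nu$ are absolutely continuous and strictly positive because $\beta/G_z\le\beta/(4\pi)$, $X_z>0$ on $[0,a^*]$ by Proposition~\ref{propSL}(4), the hypothesis $\kappa(z)<\beta$ together with $\kappa(z)\ge 0$ gives the needed strict signs on the right-hand sides, and all functions involved ($X_z,Y_z,\mu,\nu$, hence $U_z\nu$ and $V_z\mu$) are absolutely continuous on $[0,a^*]$, so the integration from $0$ is legitimate. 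This is a clean simplification of the paper's argument.
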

Before proving this proposition in the next section, let us check that it implies Proposition \ref{propMonotonicity}. We need the following lemma.

\begin{lem}\label{lemAPrioriBound}
If Proposition \ref{propBounds} holds, then  we have $\kappa(G,\beta)<\beta$ for any $G$ satisfying assumptions \ref{assumG} and \ref{assumGBounded}.  
\end{lem}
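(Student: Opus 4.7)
The plan is to bootstrap from the known initial bound $\kappa_1(4\pi,\beta)<\beta$ in Lemma \ref{lemDisk} along the linear homotopy
\[
G_z(a):=(1-z)\,4\pi+z\,G(a),\qquad z\in[0,1],
\]
which still satisfies Assumptions \ref{assumG} and \ref{assumGBounded}. Writing $\kappa(z):=\kappa_1(G_z,\beta)$, the case $G=4\pi$ almost everywhere is immediate from Lemma \ref{lemDisk}, so I can assume $\delta G:=G-4\pi$ is non-negative and strictly positive on a set of positive measure.

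Next I invoke Proposition \ref{propDerivative}, which tells me that $z\mapsto\kappa(z)$ is differentiable (hence continuous) on $[0,1]$, with $\kappa(0)=\kappa_1(4\pi,\beta)<\beta$. Suppose for contradiction that $\kappa(z)\ge\beta$ for some $z\in[0,1]$. By continuity, the set $\{z\in[0,1]:\kappa(z)\ge\beta\}$ is closed and nonempty, so its infimum $z^*$ lies in it; since $\kappa(0)<\beta$, we have $z^*>0$, $\kappa(z^*)=\beta$, and $\kappa(z)<\beta$ for every $z\in[0,z^*)$.

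For $z\in[0,z^*)$ the hypothesis of the lemma — namely that Proposition \ref{propBounds} holds — gives
\[
|Y_z(a)|^2-\beta^2 a^2|X_z(a)|^2<0\quad\text{for all }a\in\,]0,a^*[.
\]
Plugging this into the Feynman-Hellmann expression \eqref{eqDerivativeXY},
\[
\kappa'(z)=\int_0^{a^*}\!\bigl(|Y_z(a)|^2-\beta^2 a^2|X_z(a)|^2\bigr)\frac{\delta G(a)}{a\,G_z(a)^2}\,da,
\]
the integrand is non-positive everywhere and strictly negative on the positive-measure set where $\delta G>0$, so $\kappa'(z)<0$ throughout $[0,z^*)$. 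Hence $\kappa$ is strictly decreasing on $[0,z^*]$, giving $\kappa(z^*)<\kappa(0)<\beta$, a contradiction. Therefore $\kappa(z)<\beta$ for every $z\in[0,1]$, and specializing to $z=1$ yields $\kappa_1(G,\beta)<\beta$.

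The only subtle point I anticipate is making the continuation argument watertight: one has to be sure that $\kappa$ is continuous and that Proposition \ref{propBounds} is applicable at every $z<z^*$. Both are automatic — continuity from Proposition \ref{propDerivative}, and the pointwise bound from the standing hypothesis — so the argument reduces to a clean maximum-interval-of-validity contradiction with no further analytic input needed.
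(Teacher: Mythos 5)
Your proof is correct and follows essentially the same argument as the paper: you interpolate linearly from $G_0=4\pi$ to $G_1=G$, use Lemma \ref{lemDisk} for the base case, invoke continuity from Proposition \ref{propDerivative}, and derive a contradiction via the Feynman--Hellmann formula \eqref{eqDerivativeXY} together with Proposition \ref{propBounds} on the maximal interval where $\kappa(z)<\beta$. The only cosmetic difference is that the paper picks a first crossing point $z_0$ directly while you formalize it as $z^*=\inf\{z:\kappa(z)\ge\beta\}$; both yield the same conclusion.
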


\begin{proof}
If $G(a)=4\pi$ for almost every $a$ in $]0,a^*[$, then $\kappa(G,\beta)=\kappa(4\pi,\beta)$ and since, according to Lemma \ref{lemDisk}, $\kappa(4\pi,G)<\beta$, we have the desired result. We can therefore assume from now on that $G(a)>4\pi$ holds in a set of positive measure.

Let us now define, for $z\in[0,1]$,
\begin{equation*}
	G_z(a):=4\pi(1-z)+z\,G(a)
\end{equation*}
(this is a special case of the definition \eqref{eqGz}, with $G_0=4\pi $ and $G_1=G$). We recall that the function 
$z\mapsto \kappa(z):=\kappa(G_z,\beta)$ is differentiable (hence continuous)  and that $\kappa(0)=\kappa(4\pi,\beta)<\beta$. 

Let us now assume, by contradiction, that $\kappa(G,\beta)=\kappa(1)\ge \beta$. Then, by continuity, there must exist $z_0\in]0,1]$ such that $\kappa(z_0)=\beta$ and $\kappa(z)<\beta$ for all $z\in[0,z_0[$. 

According to our running assumption on $G$, 
\[\delta G(a):=G(a)-4\pi\ge0\]
almost everywhere and  $\delta G(a)>0$ in a set of positive measure. From Formula \eqref{eqDerivativeXY} and Proposition \ref{propBounds}, we obtain $\kappa'(z)<0$ for $z\in[0,z_0[$. Hence, $\kappa(z_0)<\kappa(0)<\beta$, leading to a contradiction. \end{proof}

It is now easy to complete the proof of Proposition \ref{propMonotonicity}. We go back to the original definition \eqref{eqGz} of $G_z$, and repeat the end of the proof of Lemma \ref{lemAPrioriBound}. We find that $z\mapsto \kappa(z)$ is strictly decreasing in $[0,1]$ and therefore
\begin{equation*}	\kappa(G_0,\beta)=\kappa(0)>\kappa(1)=\kappa(G_1,\beta),
\end{equation*}
which is the desired conclusion.

\begin{rem} A posteriori, Lemma \ref{lemAPrioriBound} shows that the assumption $\kappa(z)<\beta$ in Proposition \ref{propBounds} was superfluous. However, this assumption plays a crucial role in the proof of Proposition \ref{propBounds} presented in the next section, so that we were not able to avoid the indirect reasoning presented here.
\end{rem}

\subsection{Proof of Proposition \ref{propBounds}} 

We first note that Proposition \ref{propBounds} is equivalent to 
\begin{equation*}
	\left|R_z(a)\right|<\beta a,
\end{equation*}
for all $z\in[0,1]$ such that $\kappa(z)<\beta$ and all $a\in ]0,a^*[$, where
\begin{equation*}
	R_z(a):=\frac{Y_z(a)}{X_z(a)}.
\end{equation*}
 The dependence in $z$ is irrelevant to the following analysis, so we fix $z\in[0,1]$ such that $\kappa(z)<\beta$ and simplify the notation accordingly, writing $G$, $P$, $Q$, $R$ and $\kappa$ for $G_z$, $P_z$, $Q_z$, $R_z$ and $\kappa(z)$. 

We now reformulate the Sturm-Liouville equation in the phase-plane. In the following proposition, the functions under consideration are locally absolutely continuous in $]0,a^*]$, so their derivatives are locally integrable functions. The differential equations hold almost everywhere.
\begin{prop}\label{propSystem}
 The vector-valued function $a\mapsto(X(a),Y(a))$ is a solution in $]0,a^*]$ of the first-order system
\begin{equation}\label{eqSystemODE}
\left\{
\begin{array}{cl}
		X'=&\frac1{P(a)}Y,\\
		Y'=&(\beta^2Q(a)-\kappa)X.
\end{array}
\right.
\end{equation}	
In addition, $a\mapsto (X(a),Y(a))$ satisfies $X(a^*)>0$,  $Y(a^*)=0$ and has a continuous extension at $a=0$, with $X(0)>0$ and $Y(0)=0$. Finally, we have $Y(a)=O(a)$ as $a\to 0$.
\end{prop}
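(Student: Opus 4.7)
The proof is essentially an unpacking of Proposition \ref{propSL} together with the definitions of $X$ and $Y$. My plan is to verify the four claims in turn: the ODE system, the boundary data at $a^*$, the limit data at $0$, and the $O(a)$ bound.

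For the ODE system, I would argue directly from the definitions. Since $X=f_z$ and $Y=f_z^{[1]}=P f_z'$, and since $f_z$ is locally absolutely continuous on $]0,a^*]$ by Proposition \ref{propSL}(3), its a.e.-derivative satisfies $X'=f_z'=Y/P$ on $]0,a^*]$ (note $P(a)=aG(a)>0$ almost everywhere for $a>0$). For the second equation, Proposition \ref{propSL}(3) gives that $Y=f_z^{[1]}$ is absolutely continuous on $[0,a^*]$, and the Sturm-Liouville equation in the distributional form $-(Pf_z')'+\beta^2Q f_z=\kappa f_z$ holding in $]0,a^*[$ translates into $Y'=(\beta^2 Q-\kappa)X$ almost everywhere. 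This establishes \eqref{eqSystemODE}.

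For the boundary behavior at $a=a^*$: $Y(a^*)=P(a^*)f_z'(a^*)=0$ is exactly the right-endpoint boundary condition in Problem \eqref{eqSL}, and $X(a^*)=f_z(a^*)>0$ follows from Proposition \ref{propSL}(4), which states that the first eigenfunction can be chosen positive throughout $[0,a^*]$. For the extension at $a=0$: Proposition \ref{propSL}(4) also asserts absolute continuity of $f_z$ on the whole closed interval $[0,a^*]$, hence $X(a)=f_z(a)$ is continuous at $0$, and its value $X(0)=f_z(0)$ is strictly positive by the same positivity statement. The condition $Y(0)=0$ is precisely the left-endpoint boundary condition $\lim_{a\to 0}P(a)f_z'(a)=0$, combined with absolute continuity of $Y$ on $[0,a^*]$ (Proposition \ref{propSL}(3)), which guarantees that $Y$ extends continuously to $0$ with value given by this limit.

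For the $O(a)$ estimate, the idea is to integrate the second equation. Since $Y$ is absolutely continuous on $[0,a^*]$ with $Y(0)=0$,
\begin{equation*}
Y(a)=\int_0^a Y'(s)\,ds=\int_0^a\bigl(\beta^2 Q(s)-\kappa\bigr)X(s)\,ds.
\end{equation*}
Now $Q(s)=s/G(s)\le s/(4\pi)$ by Assumption \ref{assumG}, so $|\beta^2 Q(s)-\kappa|\le \beta^2 a^*/(4\pi)+\kappa$ on $]0,a^*[$, and $X$ is bounded on $[0,a^*]$ by continuity. The integrand is therefore bounded by a constant $C$ independent of $s$, giving $|Y(a)|\le C a$, which is the claimed $O(a)$ bound. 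The main ``obstacle'' here is really just the bookkeeping: once one trusts the regularity statements in Proposition \ref{propSL}, every assertion reduces to a direct computation or a one-line integration, so I do not expect any serious technical difficulty in this particular step.
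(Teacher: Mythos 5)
Your derivation of the system \eqref{eqSystemODE} and of the endpoint conditions $Y(a^*)=0$, $X(a^*)>0$, and $Y(0)=0$ is correct and coincides with the paper's. The gap is in the claim that $X$ extends continuously to $a=0$ with $X(0)>0$: you obtain it by citing the statement in Proposition~\ref{propSL}(4) that $f_1$ is absolutely continuous on all of $[0,a^*]$, but in the paper that statement is not established independently of Proposition~\ref{propSystem} --- it is proved in Appendix~\ref{appSL} by precisely the endpoint analysis you are being asked to supply, so the argument is circular in context.

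The difficulty you are sidestepping is that the coefficient $1/P(a)\sim 1/(4\pi a)$ in the first equation of \eqref{eqSystemODE} is not integrable near $a=0$, so boundedness of $Y$ on $[0,a^*]$ alone does not show that $X$ has a finite limit there. The paper handles this by a bootstrap: from $|X'(a)|\le\|Y\|_\infty/(4\pi a)$ one gets $|X(a)|\le C_1|\log a|+C_2$; inserting this in the second equation gives $|Y(a)|\le C_3\,a\,|\log a|$; inserting that back gives $|X'(a)|\le C|\log a|$, which \emph{is} integrable near $0$, so $X$ is bounded and the monotone limit $X(0)=\ell$ is finite (and positive, since $X>0$ and $X$ is decreasing on some $]0,\varepsilon[$ because $Y'<0$ there). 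Only then does your one-line integration of the second equation yield $Y(a)=O(a)$; it is the final step of the paper's argument, not a step that can be carried out before the boundedness of $X$ is established.
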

The differential system \eqref{eqSystemODE} in the proposition is satisfied in $]0,a^*[$ as an immediate consequence of the definition of $X$ and $Y$ and of the Sturm-Liouville differential equation in \eqref{eqSL}. The behavior of $X$ and $Y$ at the endpoints $a=0$ and $a=a^*$ is more delicate (especially the left endpoint $a=0$). This part of the result is established in Appendix \ref{appSL}.

Taking Proposition \ref{propSystem} for granted, a straightforward computation implies that $R$ satisfies a Riccati equation in $]0,a^*]$.
\begin{prop}\label{propRiccati}
The function $R$ is a solution of a first-order, non-linear,  Riccati-type differential equation in $]0,a^*]$,
\begin{equation}\label{eqRiccatiODE}
R'=\beta^2 Q(a)-\kappa-\frac1{P(a)}R^2.
\end{equation}
In addition, $a\mapsto R(a)$ has a continuous extension at $a=0$, with $R(0)=0$, and $R(a^*)=0$.  
\end{prop}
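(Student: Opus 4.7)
The plan is straightforward given Proposition \ref{propSystem}: derive the Riccati equation by the quotient rule applied to $R=Y/X$, and read off the boundary data from the endpoint values of $(X,Y)$ already established. To make sense of $R$ throughout the interval, I would first note that $X=f_z$ is the first eigenfunction of the Sturm-Liouville problem and, by Proposition \ref{propSL}(4), may be chosen strictly positive on $[0,a^*]$. Since $X$ is absolutely continuous on $[0,a^*]$, and $Y=f_z^{[1]}$ is absolutely continuous on $[0,a^*]$ by Proposition \ref{propSL}(3), the quotient $R$ is locally absolutely continuous in $]0,a^*]$, hence differentiable almost everywhere there.

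To obtain the ODE, I would differentiate $R=Y/X$ almost everywhere in $]0,a^*]$ and substitute from system \eqref{eqSystemODE}:
\[
R'=\frac{Y'}{X}-\frac{YX'}{X^2}=\bigl(\beta^2 Q(a)-\kappa\bigr)-\frac{Y}{X}\cdot\frac{Y}{P(a)X}=\beta^2 Q(a)-\kappa-\frac{R^2}{P(a)},
\]
which is exactly \eqref{eqRiccatiODE}.

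For the boundary data, I would read off both values directly from Proposition \ref{propSystem}. At $a=a^*$ one has $X(a^*)>0$ and $Y(a^*)=0$, hence $R(a^*)=0$. At $a=0$, the continuous extensions satisfy $X(0)>0$ and $Y(0)=0$, so $R$ extends continuously with $R(0)=Y(0)/X(0)=0$.

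The main obstacle in this proposition is essentially absent: all the delicate analysis, in particular the existence and positivity of the continuous extension $X(0)$ and the vanishing $Y(0)=0$ at the singular left endpoint, has already been packaged into Proposition \ref{propSystem} (whose proof is deferred to the appendix). The only additional care needed here is that $X$ does not vanish on $]0,a^*[$, which follows from the standard fact that the first eigenfunction of a Sturm-Liouville problem has constant sign on the open interval.
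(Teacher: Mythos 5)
Your proof is correct and matches the paper's intent exactly: the paper states this proposition as an immediate, "straightforward computation" from Proposition \ref{propSystem}, and what you have written out — quotient rule, substitution from system \eqref{eqSystemODE}, and reading off the endpoint values $R(0)=Y(0)/X(0)=0$ and $R(a^*)=Y(a^*)/X(a^*)=0$ using the positivity of $X$ — is precisely that computation made explicit.
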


We now proceed with the proof of Proposition \ref{propBounds}. The Riccati equation \eqref{eqRiccatiODE} can be written
\begin{equation}
\label{eqODEF}
	R'=F(a,R),
\end{equation}
with 
\begin{equation*}
	F(a,Z):=\beta^2Q(a)-\kappa-\frac{1}{P(a)}Z^2.
\end{equation*}
The proof then goes roughly as follows. If we define
\begin{equation*}
		Z_\pm(a):=\pm\beta a,
\end{equation*}
we find 
\begin{equation*}
	F(a,Z_\pm(a))=\beta^2Q(a)-\kappa-\frac{1}{P(a)}\beta^2a^2=\frac{\beta^2a}{G(a)}-\kappa-\frac{\beta^2a^2}{a G(a)}=-\kappa,
\end{equation*}
while 
\begin{equation*}
	Z_\pm'(a)=\pm \beta.
\end{equation*}
Since $\kappa$ is positive and, by hypothesis, $\kappa<\beta$, the functions $Z_+$ and $Z_-$ are respectively a strict supersolution and a strict subsolution of the ODE \eqref{eqODEF}. If the coefficient functions $1/P$ and $Q$ were continuous in $[0,a^*]$, this would be sufficient to reach the conclusion of Proposition \ref{propBounds}. Here we can only claim that these functions are locally integrable in $]0,a^*]$. In particular, $1/P(a)$ could behave as $1/a$ as $a\to 0$.  However, this method of comparison still works for Equation \eqref{eqRiccatiODE}. Let us state and prove the result more rigorously.
\begin{lem}\label{lemComparison}
	For all $a\in]0,a^*]$,
	\begin{equation}\label{eqComparison}
	-\beta a =Z_-(a)<R(a)<Z_+(a)=\beta a.
	\end{equation}
\end{lem}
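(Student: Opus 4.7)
My strategy is to treat $Z_\pm(a) = \pm\beta a$ as a strict super- and sub-solution of the Riccati equation \eqref{eqRiccatiODE}: the hypothesis $\kappa < \beta$ gives $\beta + \kappa > 0$ and $\beta - \kappa > 0$, hence $Z_+'(a) = \beta > -\kappa = F(a, Z_+(a))$ and $Z_-'(a) = -\beta < -\kappa = F(a, Z_-(a))$. Since $R(0) = Z_\pm(0) = 0$, the comparison cannot simply be initialized at the left endpoint; I first establish the strict inequalities on a small interval $]0, a_1]$ by asymptotic analysis, and then propagate them to all of $]0, a^*]$ by a standard super/sub-solution contradiction argument adapted to the low regularity of $P$ and $Q$.

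\emph{Behavior of $R$ near $0$.} I would show that $R(a)/a \to -\kappa$ as $a \to 0^+$. By Propositions \ref{propSL} and \ref{propSystem}, $X = f_1$ is absolutely continuous and positive on $[0, a^*]$, while $Y = f_1^{[1]}$ is absolutely continuous on $[0, a^*]$ with $Y(0) = 0$; combined with \eqref{eqSystemODE} this gives $Y(a) = \int_0^a (\beta^2 Q(s) - \kappa) X(s)\, ds$. Since $Q(s) = s/G(s) \leq s/(4\pi)$ by Assumption \ref{assumG}, one has $\int_0^a \beta^2 Q(s) X(s)\, ds = O(a^2)$, while continuity of $X$ at $0$ gives $\int_0^a \kappa X(s)\, ds = \kappa X(0) a + o(a)$. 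Therefore $Y(a) = -\kappa X(0) a + o(a)$, and dividing by $X(a) \to X(0) > 0$ yields the claimed limit. Since $0 < \kappa < \beta$, it follows that $|R(a)| < \beta a$ on some interval $]0, a_1]$.

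\emph{Comparison argument.} Set $\phi_\pm(a) := \beta a \mp R(a)$, so that $\phi_\pm > 0$ on $]0, a_1]$ by the previous step. Arguing by contradiction, I suppose that the set $\{a \in [a_1, a^*] : \min(\phi_+(a), \phi_-(a)) \leq 0\}$ is nonempty and let $a_0$ be its infimum. By continuity and minimality, at least one of $\phi_\pm(a_0)$ vanishes; I focus on $\phi_+(a_0) = 0$, the other case being symmetric. Using \eqref{eqRiccatiODE} together with the algebraic identity $\beta^2 Q(s) = \beta^2 s^2/P(s)$ (which follows from $P(s) Q(s) = s^2$), I compute
\[\phi_+'(s) = \beta + \kappa - \frac{\phi_+(s)\,\phi_-(s)}{P(s)} \quad \text{a.e.}\]
For $s$ approaching $a_0$ from the left, continuity makes $\phi_+(s)$ small while $\phi_-(s) \to 2\beta a_0$ is bounded and $P(s) \geq 4\pi s \geq 2\pi a_0$ is bounded below, so I can choose $\delta > 0$ with $\phi_+'(s) \geq (\beta + \kappa)/2$ on $[a_0 - \delta, a_0]$. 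Absolute continuity of $\phi_+$ then forces $\phi_+(a_0 - \delta) \leq -(\beta + \kappa)\delta/2 < 0$, contradicting $\phi_+ > 0$ on $[a_1, a_0[$.

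\emph{Main obstacle.} The delicate point is the possible singularity of $1/P$ at $a = 0$ (since $P(a) = a G(a)$ vanishes there), which prevents a direct off-the-shelf application of an ODE comparison principle on the closed interval $[0, a^*]$. The near-zero analysis sidesteps this by exploiting the absolute continuity of $Y = f_1^{[1]}$ on the full interval $[0, a^*]$ granted by Proposition \ref{propSL}, together with the quadratic smallness $Q(s) = O(s)$ inherited from the isoperimetric bound $G \geq 4\pi$; both ingredients are essential to extract the leading coefficient $-\kappa$ in the expansion of $R(a)/a$.
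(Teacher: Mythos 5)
Your proof is correct and follows essentially the same two-part strategy as the paper: first establish the strict inequalities near $a=0$ by exploiting $Y(a)=O(a)$ and $X(0)>0$, then propagate them by a contradiction argument at the infimum of the bad set. Your reformulation in terms of $\phi_\pm(a)=\beta a\mp R(a)$ with the factorized ODE $\phi_+'=\beta+\kappa-\phi_+\phi_-/P$ is a clean cosmetic repackaging of the paper's estimate $|F(\alpha,R(\alpha))+\kappa|=|R(\alpha)^2-\beta^2\alpha^2|/P(\alpha)$; the near-zero step in your version (showing $R(a)/a\to-\kappa$) is slightly more precise than the paper's ($R(a)=O(a)$), but both rely on the same ingredients and both suffice.
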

\begin{proof} We first show that Inequality \eqref{eqComparison} holds in a neighborhood of $0$. Since $X(0)>0$ and $Y(a)=O(a)$, we have $R(a)=O(a)$ and thus
\begin{equation*}
	F(a,R(a))=-\kappa+O(a)
\end{equation*}
as $a\to0$. Since $0<\kappa<\beta$, it follows that there exists $\varepsilon>0$ such that, for all $a\in]0,\varepsilon[$,
\begin{equation*}
	|F(a,R(a))|<\beta.
\end{equation*}
From the differential equation \eqref{eqODEF}, we get
\begin{equation*}
	R(a)=\int_0^a F(\alpha,R(\alpha))\,d\alpha
\end{equation*}
and therefore, for $a\in]0,\varepsilon[$,
\begin{equation*}
	|R(a)|\le \int_0^a |F(\alpha,R(\alpha))|\,d\alpha <\beta a.
\end{equation*}

Let us now assume that there exists some $a\in]0,a^*]$ such that either 
\begin{equation*}
		R(a)>\beta a
\end{equation*}
or
\begin{equation*}
		R(a)<-\beta a.
\end{equation*}
We define $a_1$ as the infimum of  all such $a$'s.  From the first part, $a_1>0$, and since $R$ is continuous we have $R(a_1)=\beta a_1$ or $R(a_1)=-\beta a_1$.

Let us consider the first case and let us pick $a\in]0,a_1[$. We have 
\begin{equation}
	\label{eqCompUp}
	R(a)-\beta a=\int_{a}^{a_1}(\beta-F(\alpha,R(\alpha)))\,d\alpha.
\end{equation}
Since 
\begin{equation*}
		F(\alpha,\beta \alpha)=-\kappa,
\end{equation*}
for almost all $\alpha\in]0,a^*]$, we have
\begin{align*}
		&|F(\alpha,R(\alpha))+\kappa|=|F(\alpha,R(\alpha))-F(\alpha,\beta \alpha)|\\=&\frac{1}{P(\alpha)}\left|R(\alpha)^2-\beta^2\alpha^2\right| \le\frac{1}{4\pi \alpha}\left|R(\alpha)^2-\beta^2\alpha^2\right|.
\end{align*}
By continuity of $\alpha\mapsto R(\alpha)$, we have, for $\alpha$ close enough to $a_1$, that $F(\alpha,R(\alpha))$ is close to $-\kappa$, in particular $F(\alpha,R(\alpha))<0$, and thus
\begin{equation*}
	\beta-F(\alpha,R(\alpha))>0.
\end{equation*}
Using Equation \eqref{eqCompUp}, we find
\begin{equation*}
	R(a)>\beta a
\end{equation*}
for $a<a_1$ and $a$ close enough to $a_1$, contradicting the definition of $a_1$.

In the second case, we have, for $a\in]0,a_1[$, 
\begin{equation}
\label{eqCompLow}
	R(a)+\beta a=\int_{a}^{a_1}(-\beta-F(\alpha,R(\alpha)))\,d\alpha.
\end{equation}
As above, we obtain that for $\alpha$ close enough to $a_1$, $F(\alpha,R(\alpha))$ is close to $-\kappa$. In particular, since $\kappa<\beta$ by hypothesis,
\begin{equation*}
	-\beta-F(\alpha,R(\alpha))<0
\end{equation*}
for $\alpha$ close enough to $a_1$. It then follows from Equation \eqref{eqCompLow} that
\begin{equation*}
	R(a)<-\beta a
\end{equation*}
for $a<a_1$ and $a$ close enough to $a_1$, contradicting the definition of $a_1$.

Thus, Inequality \eqref{eqComparison} holds in all of $]0,a^*]$.
\end{proof}

\subsection{Upper bound for the general variational problem}
\label{subsecUnbounded}

We now complete our study of the one-dimensional variational problem \eqref{eqKappa} by removing Assumption \ref{assumGBounded}. Our goal is the following result.

\begin{prop} \label{propUpperBound}

Let $G$ be a function satisfying Assumption \ref{assumG}. For all $\beta>0$,
\[\kappa_1(G,\beta)\le \kappa_1(4\pi,\beta),\]
with equality if, and only if, $G(a)=4\pi$ for almost every $a$.
\end{prop}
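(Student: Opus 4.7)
The plan is to reduce to the already established Proposition \ref{propMonotonicity} by approximating the (possibly unbounded) $G$ from below with a sequence of bounded functions, and then passing to the limit in the variational characterization \eqref{eqKappa}. The main obstacle will be step three below: showing that the natural candidate for a test function for $\kappa_1(G,\beta)$ in fact lies in $\mathcal{F}_G$, and that the Rayleigh quotient behaves well in the limit.

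First, I would set $G_n(a):=\min\{G(a),n\}$ for $n\ge 4\pi$. Each $G_n$ satisfies Assumptions \ref{assumG} and \ref{assumGBounded}, with $4\pi\le G_n\le n$ and $G_n\uparrow G$ pointwise a.e. Proposition \ref{propMonotonicity} (applied with $G_0=4\pi$ and $G_1=G_n$) yields $\kappa_1(G_n,\beta)\le\kappa_1(4\pi,\beta)$, with strict inequality as soon as $G_n>4\pi$ on a set of positive measure. Monotonicity in $n$ gives that $\kappa_1(G_n,\beta)$ is non-increasing, hence convergent. Let $f_n$ be the positive $L^2$-normalized first eigenfunction from Proposition \ref{propSL}. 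From
\[4\pi\int_0^{a^*}a\,|f_n'(a)|^2\,da\le\int_0^{a^*}a\,G_n(a)|f_n'(a)|^2\,da\le\kappa_1(G_n,\beta)\le\kappa_1(4\pi,\beta),\]
the sequence $(f_n)$ is bounded in $\mathcal{F}_{4\pi}$; by the compact embedding in Lemma \ref{lemFBounded}, a subsequence (still denoted $f_n$) converges strongly in $L^2(]0,a^*[,da)$ and weakly in $\mathcal{F}_{4\pi}$ to some $f$ with $\|f\|_{L^2}=1$.

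The key step is to show $f\in\mathcal{F}_G$ together with $\mathcal{R}_G(f)\le\liminf_n\kappa_1(G_n,\beta)$. For any fixed $N$ and $n\ge N$, $G_N\le G_n$ gives $\int_0^{a^*}a\,G_N|f_n'|^2\,da\le\kappa_1(G_n,\beta)\le\kappa_1(4\pi,\beta)$. Since $G_N$ is bounded above and below, weak convergence of $f_n'$ in $L^2(a\,da)$ is the same as weak convergence in $L^2(a\,G_N\,da)$, and lower semi-continuity of the norm yields $\int a\,G_N|f'|^2\,da\le\liminf_n\int a\,G_n|f_n'|^2\,da$. Letting $N\to\infty$ by monotone convergence gives $\int a\,G|f'|^2\,da\le\liminf_n\int a\,G_n|f_n'|^2\,da\le\kappa_1(4\pi,\beta)<\infty$, so $f\in\mathcal{F}_G$. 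For the potential term, since $a/G_n\le a/(4\pi)$ is bounded, $a/G_n\to a/G$ pointwise a.e., and $|f_n|^2\to|f|^2$ in $L^1$, a standard split
\[\int \tfrac{a}{G_n}|f_n|^2\,da-\int\tfrac{a}{G}|f|^2\,da=\int\tfrac{a}{G_n}(|f_n|^2-|f|^2)\,da+\int\bigl(\tfrac{a}{G_n}-\tfrac{a}{G}\bigr)|f|^2\,da\]
shows $\int a|f_n|^2/G_n\,da\to\int a|f|^2/G\,da$ by dominated convergence. Adding the two limits gives $\mathcal{R}_G(f)\le\liminf_n\kappa_1(G_n,\beta)\le\kappa_1(4\pi,\beta)$, and Rayleigh's principle yields $\kappa_1(G,\beta)\le\mathcal{R}_G(f)\le\kappa_1(4\pi,\beta)$.

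For the equality case, one direction is obvious. Conversely, if $G>4\pi$ on a set $E$ of positive measure, then for $N$ large enough $E\cap\{G<N\}$ still has positive measure (as $G<\infty$ a.e.) and $G_N>4\pi$ there, so Proposition \ref{propMonotonicity} gives the strict bound $\kappa_1(G_N,\beta)<\kappa_1(4\pi,\beta)$. Combining with the non-increasing property of $n\mapsto\kappa_1(G_n,\beta)$ and the inequality established above, $\kappa_1(G,\beta)\le\liminf_n\kappa_1(G_n,\beta)\le\kappa_1(G_N,\beta)<\kappa_1(4\pi,\beta)$, completing the proof.
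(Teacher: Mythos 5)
Your proof is correct and follows essentially the same strategy as the paper's: truncate $G$ from above by $G_n=\min\{G,n\}$, apply Proposition \ref{propMonotonicity} to each bounded truncation, and pass to the limit via the compact embedding from Lemma \ref{lemFBounded}. The paper's Lemma \ref{lemLimit} additionally proves the reverse inequality $\lim_n\kappa_1(G_n,\beta)\le\kappa_1(G,\beta)$ to identify the limit exactly, but, as your argument shows, only the direction $\kappa_1(G,\beta)\le\liminf_n\kappa_1(G_n,\beta)$ is needed for both the inequality and the equality characterization.
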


For any positive integer $n$, we set  
\begin{equation*}
	G_n(a):=\min\{G(a),4n\pi\}.
\end{equation*}
Let us note that $G_n$ then satisfies both Assumptions \ref{assumG} and \ref{assumGBounded}, and that, for a given $a$,  $(G_n(a))_{n\ge1}$ is non-decreasing and tends to $G(a)$.  

\begin{lem}\label{lemLimit}
	The sequence $(\kappa_1(G_n,\beta))_{n\ge1}$ is non-increasing and 
	\begin{equation*}
		\lim_{n\to\infty}\kappa_1(G_n,\beta)=\kappa_1(G,\beta).
	\end{equation*}
\end{lem}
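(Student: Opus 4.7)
My plan is to establish both inequalities between $\kappa_\infty := \lim_n \kappa_1(G_n,\beta)$ and $\kappa_1(G,\beta)$, after noting monotonicity. Since $G_n\le G_{n+1}$ pointwise and both satisfy Assumptions \ref{assumG} and \ref{assumGBounded}, Proposition \ref{propMonotonicity} (including the degenerate case $G_n=G_{n+1}$ a.e., which gives equality of the eigenvalues) yields $\kappa_1(G_n,\beta)\ge\kappa_1(G_{n+1},\beta)$. The sequence is non-negative, hence $\kappa_\infty$ exists.

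Let $R_G(f)$ denote the Rayleigh quotient in \eqref{eqKappa}. For the bound $\kappa_\infty\le\kappa_1(G,\beta)$, I would fix $f\in\mathcal F_G\setminus\{0\}$ and note that $G_n\le G$ implies $f\in\mathcal F_{G_n}$. Rayleigh's principle applied to $G_n$ then yields $\kappa_1(G_n,\beta)\le R_{G_n}(f)$, and $R_{G_n}(f)\to R_G(f)$ follows by dominated convergence: the density $aG_n|f'|^2$ is dominated by $aG|f'|^2\in L^1$ and increases to it, while $(a/G_n)|f|^2$ is dominated by $(a/4\pi)|f|^2\in L^1$ and decreases to $(a/G)|f|^2$. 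Taking the infimum over $f$ gives the bound.

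The reverse inequality $\kappa_1(G,\beta)\le\kappa_\infty$ is the heart of the argument. Let $f_n$ be the positive $L^2$-normalized first eigenfunction for $G_n$ from Proposition \ref{propSL}. The Rayleigh identity and the bound $G_n\ge 4\pi$ show that $(f_n)$ is bounded in $\mathcal F_{4\pi}$, which embeds compactly into $L^2(]0,a^*[,da)$ by Lemma \ref{lemFBounded}. Extracting a subsequence, $f_n\to f_\infty$ strongly in $L^2$ with $\|f_\infty\|_{L^2}=1$. Rather than work with $f_n'$ directly, I would introduce $h_n:=\sqrt{aG_n}\,f_n'$, which is uniformly bounded in $L^2(]0,a^*[,da)$ by the Rayleigh identity, and pass along a further subsequence to a weak limit $h_\infty\in L^2$. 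Testing
\begin{equation*}
	\int_0^{a^*} f_n'\,\phi\,da = \int_0^{a^*} h_n\cdot\frac{\phi}{\sqrt{aG_n}}\,da
\end{equation*}
against $\phi\in C_c^\infty(]0,a^*[)$, and using that $\phi/\sqrt{aG_n}\to\phi/\sqrt{aG}$ strongly in $L^2$ (by dominated convergence, with majorant $|\phi|/\sqrt{4\pi a}$ bounded on the support of $\phi$), I identify $f_\infty'=h_\infty/\sqrt{aG}$ as a distribution on $]0,a^*[$, so that $\sqrt{aG}\,f_\infty'\in L^2$ and hence $f_\infty\in\mathcal F_G$. Lower semicontinuity of the $L^2$-norm under weak convergence gives $\int aG|f_\infty'|^2\,da\le\liminf\int aG_n|f_n'|^2\,da$, and strong $L^2$-convergence of $f_n$ combined with the bound $1/G_n\le 1/(4\pi)$ handles the potential term via standard dominated convergence. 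Summing yields $\kappa_1(G,\beta)\le R_G(f_\infty)\le\kappa_\infty$.

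The main obstacle is the identification of the weak limit of $f_n'$ across a coefficient $G_n$ that is itself varying and may blow up, which is the very reason Assumption \ref{assumGBounded} was imposed earlier in the section. The key trick is to pass to the weak limit at the level of the rescaled sequence $h_n=\sqrt{aG_n}\,f_n'$, where the Rayleigh identity supplies a uniform $L^2$-bound, and to exploit that after cancellation in the test-function identity above the residual weight $1/\sqrt{aG_n}$ is controlled by the fixed, locally bounded weight $1/\sqrt{4\pi a}$.
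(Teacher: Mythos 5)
Your argument is correct, and the first inequality ($\kappa_\infty\le\kappa_1(G,\beta)$ via a fixed trial function and monotone/dominated convergence) coincides with the paper's. Where you diverge is in the reverse inequality. The paper freezes the leading coefficient at a fixed index $m$: it observes that for $n_k\ge m$ one has $\int P_m|(f_1^{n_k})'|^2+\int Q_{n_k}|f_1^{n_k}|^2\le\kappa^{n_k}$, passes to the limit in $n_k$ using weak lower semicontinuity of $f\mapsto\int P_m|f'|^2$ on $\mathcal F_{4\pi}$ and strong $L^2$-convergence for the potential term, and only afterwards lets $m\to\infty$ by monotone convergence to conclude both $f^*\in\mathcal F_G$ and $q(f^*)\le\kappa^*$. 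You instead rescale, setting $h_n:=\sqrt{aG_n}\,f_n'$, extract a weak $L^2$-limit $h_\infty$, and identify $f_\infty'=h_\infty/\sqrt{aG}$ by pairing against $\phi/\sqrt{aG_n}\to\phi/\sqrt{aG}$ in $L^2$; this produces $f_\infty\in\mathcal F_G$ and the lower-semicontinuity estimate $\int aG|f_\infty'|^2\le\liminf\int aG_n|f_n'|^2$ in a single pass, without the double limit. Both devices address the same obstacle (the varying weight $G_n$ under the derivative); yours avoids the auxiliary parameter $m$ and is arguably slicker, while the paper's uses only monotonicity of the coefficients and weak lower semicontinuity of a fixed quadratic form, which is perhaps more elementary. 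Both are sound.
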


\begin{proof}  We recall that, according to Lemma \ref{lemFBounded}, $\mathcal F_{G_n}=\mathcal F_{4\pi}$ and $\mathcal F_{4\pi}$ is compactly embedded in $L^2(]0,a^*[,da)$. We also note that Assumption \ref{assumG} implies that $\mathcal F_G\subset\mathcal F_{4\pi}$. It follows from Proposition \ref{propMonotonicity} that the sequence $(\kappa_1(G_n,\beta))_{n\ge1}$ is non-increasing with $n$ and therefore convergent. Let us define
\begin{equation*}
	\kappa^*:=\lim_{n\to\infty} \kappa_1(G_n,\beta).
\end{equation*}
It remains to show that $\kappa^*=\kappa_1(G,\beta)$.

To simplify the notation slightly, we write
\begin{equation*}
	\kappa^n:=\kappa_1(G_n,\beta)
\end{equation*}
and, for any $f\in\mathcal F_{4\pi}$, 
\begin{equation*}
	q_n(f):=\int_0^{a^*}\left(P_n(a)|f'(a)|^2+\beta^2Q_n(a)|f(a)|^2\right)\,da,
\end{equation*}
where
\begin{equation*}
	P_n(a):=a\,G_n(a)
\end{equation*}
and
\begin{equation*}
	Q_n(a):=\frac{a}{G_n(a)}.
\end{equation*}
Let us now fix $f\in\mathcal F_G$ with $\|f\|_{L^2}=1$. By definition of $\kappa_1(G_n,\beta)$, 
\begin{equation}
\label{eqKappaN}
	q_n(f)\ge \kappa_1(G_n,\beta).
\end{equation} 
We find that 
\begin{equation*}
	\int_0^{a^*}P_n(a)|f'(a)|^2\,da\to\int_0^{a^*}P(a)|f'(a)|^2\,da
\end{equation*}
and
\begin{equation*}
	\int_0^{a^*}Q_n(a)|f(a)|^2\,da\to\int_0^{a^*}Q(a)|f(a)|^2\,da
\end{equation*}
from the Monotone and the Dominated Convergence Theorems, respectively. Passing to the limit in Inequality \eqref{eqKappaN}, we obtain 
\begin{equation*}
	q(f)\ge \kappa^*.
\end{equation*}
Therefore $\kappa^*\le \kappa_1(G,\beta)$. 

Let now $f_1^n$ denote, for each $n$, a function in $\mathcal F_{4\pi}$ such that $\|f_1^n\|_{L^2}=1$ and $q_n(f_1^n)=\kappa^n$. Since the sequence $(\kappa^n)$ is convergent, we can easily check, using Assumption \ref{assumG}, that $(f_1^n)$ is bounded in $\mathcal F_{4\pi}$. Hence, we can find a subsequence $(f_{1}^{n_k})$ and a function $f^*$ in $\mathcal F_{4\pi}$ such that $(f_{1}^{n_k})$ converges to $f^*$ weakly in $\mathcal F_{4\pi}$ and strongly in $L^2(]0,a^*[,da)$. Now, for a fixed integer $m$, as soon as $n_k\ge m$,
\begin{align*}
	&\int_0^{a^*}P_m(a)|(f_{1}^{n_k})'(a)|^2\,da+\int_0^{a^*}Q_{n_k}(a)|f_{1}^{n_k}(a)|^2\,da\\
	&\le\int_0^{a^*}P_{n_k}(a)|(f_{1}^{n_k})'(a)|^2\,da+\int_0^{a^*}Q_{n_k}(a)|f_{1}^{n_k}(a)|^2\,da=\kappa^n.
\end{align*}
Passing to the limit as $n_k\to\infty$, we get that
\begin{equation*}
 \int_0^{a^*}P_{m}(a)|(f^*)'(a)|^2\,da+\int_0^{a^*}Q(a)|f^*(a)|^2\,da\le\kappa^*.
\end{equation*}
Since $m$ is arbitrary, we conclude that $f^*$ belongs to $\mathcal F_G$ and that 
\begin{equation*}
	q(f^*)\le \kappa^*.
\end{equation*}
Hence $\kappa^*\ge \kappa_1(G,\beta)$.
\end{proof}

Lemma \ref{lemLimit}, combined with Proposition \ref{propMonotonicity}, yields Proposition \ref{propUpperBound}. Although we do not use it, we can note that the proof of Lemma \ref{lemLimit} shows the existence of a minimizer $f^*\in\mathcal F_G$ for Problem \eqref{eqKappa}, for any $G$ satisfying Assumption \ref{assumG} (not necessarily bounded).

\subsection{Proof of the main theorem}
 
We can now finish the proof of Theorem \ref{thmRFK}. From Propositions \ref{propIsoperimetricG}, \ref{propUpper} and \ref{propUpperBound}, it follows that for any $\beta>0$, 
\[\lambda_1^N(\Omega,\beta)\le \kappa_1(4\pi,\beta),\]
with equality only if $\Omega$ is a disk. On the other hand, as recalled in Section \ref{subsecMagnLap},
\[\kappa_1(4\pi,\beta)=\lambda_1^N(\Omega^*,\beta)\]    whenever $0\le \beta\le \frac{\beta^*\pi}{a^*}$, which proves the theorem.


\appendix

\section{Existence of $\beta^*$}
\label{appBetaStar}

Let us give a proof of Proposition \ref{propBetaStar} from the introduction. We note that Kachmar and Lotoreichik used a similar approach (see the proof of Proposition 3.3 in \cite{KL2022Magnetic}).

\begin{proof}
We have
\[
\kappa_1(n,\beta,1)=\inf_{v\in\mathcal H_n\setminus\{0\}} \mathcal R_n(v)
\]
where
\[
\mathcal R_n(v):=\frac{\int_0^1\left(v'(r)^2+\left(\frac{\beta r}{2}-\frac{n}{r}\right)^2v^2(r)\right)r\,dr}{\int_0^1v^2(r)\,r\,dr},
\]

\[\mathcal H_0:=\{f\in L^2(]0,1[, r\,dr):f'\in L^2(]0,1[, r\,dr)\}\]
and, for $n\neq0$,
\[\mathcal H_n:=\{f\in L^2(]0,1[, r\,dr):f'\in L^2(]0,1[, r\,dr)\,,f/r\in L^2(]0,1[,r\,dr) \}.
\]
We note that $\mathcal H_n\subset\mathcal H_0$ when $n\ne 0$.
Now we see that
\[
\left(\frac{\beta r}{2}-\frac{n}{r}\right)^2=\frac{n^2}{r^2}-n\beta+\frac{\beta^2r^2}{4}>|n|(|n|-\beta)+\frac{\beta^2r^2}{4} \ge\frac{\beta^2r^2}{4}
\]
when $\beta\leq 1$, for all $n\neq0$ and $r\in]0,1[$. This implies that for any $v\in\mathcal H_n$, we have $\mathcal R_n(v)>\mathcal R_0(v)$ for all $n\ne 0$ . 

Applying Rayleigh's principle, we obtain that
\[\kappa_1(n,\beta,1)>\kappa_1(0,\beta,1)\]
for all $\beta\in[0,1]$. This implies the desired result.
\end{proof}

\section{Chain rule formulas}
\label{appChainRules}

Let us first prove Formula \eqref{eqGrad}. We fix $g$, a function in $\mathcal G_\Omega$. We first have to settle a small technical issue concerning the definition of the function $g'\circ\psi$. Since $g'$ is only defined up to equality almost everywhere, we have to check that, given two measurable functions $h_1,h_2:[0,t^*]\to \mathbb C$,  whenever $h_1(t)=h_2(t)$ for almost every $t\in[0,t^*]$, $h_1\circ\psi(p)=h_2\circ\psi(p)$ for almost every $p\in \Omega$. This holds if, and only if, $|E|=0$ implies $|\psi^{-1}(E)|=0$ for any $E\subset[0,t^*]$ measurable. This is equivalent to saying that the push forward of the Lebesgue measure on $\Omega$ by the mapping $\psi$, denoted by ${\psi}_*(dp)$, is absolutely continuous with respect to the Lebesgue measure on $[0,t^*]$. 

The measure ${\psi}_*(dp)$ can easily be computed using the Coarea Formula. Indeed, let us consider $E$, a measurable subset of $[0,t^*]$ and let us apply Remark \ref{remCoarea} to the function $h(p)=\chi_{\psi^{-1}(E)}(p)/|\nabla\psi(p)|$, where $\chi_{\psi^{-1}(E)}$ is the characteristic function of $\psi^{-1}(E)$. We obtain
\begin{align*}
	|\psi^{-1}(E)|&=\int_{\Omega}\chi_{\psi^{-1}(E)}(p)\,dp=\int_{\Omega}h(p)|\nabla \psi(p)|\,dp=\int_0^{t^*}\left(\int_{\Gamma_s}h(q)\,d\,\mathcal H^1(q)\right)\,ds\\
	&=\int_0^{t^*}\left(\int_{\Gamma_s}\frac{{\chi_E(\psi(q))}}{|\nabla\psi(q)|}\,d\,\mathcal H^1(q)\right)\,ds=\int_{0}^{t^*}\chi_E(s)\gamma_\Omega(s)\,ds.
\end{align*}
This shows that ${\psi}_*(dp)=\gamma_\Omega(t)\,dt$, and the right-hand side is absolutely continuous with respect to $dt$, since $\gamma_\Omega$ is integrable.

To show that Formula \eqref{eqGrad} holds in the sense of distribution, we have to show that $p\mapsto g'(\psi(p))\nabla \psi(p)$ is locally integrable and that, for any smooth function $\varphi$ compactly supported in $\Omega$,
\begin{equation} \label{eqDistDer}
	-\int_{\Omega} g(\psi(p))\nabla\varphi(p)\,dp= \int_{\Omega} g'(\psi(p))\varphi(p)\nabla\psi(p)\,dp.
\end{equation}
We first note that, if $g$ is in $C^1(\Omega)$, Equation \eqref{eqDistDer} follows from Green's identity and the standard Chain Rule. 
\begin{lem}\label{lemApproximation}
For any function $g\in\mathcal G$, there exists a sequence $(g_n)$ of functions in $C^1([0,t^*])$ such that 
\begin{enumerate}
	\item $g_n'\to g'$ in $L^2(]0,t^*[,\mu(t)\,
	dt)$;
	\item $g_n \to g$ in $L^2(]0,t^*[,\gamma_\Omega(t)\,
	dt)$.
\end{enumerate}
\end{lem}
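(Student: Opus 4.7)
The approach is to combine a weight-adapted truncation near the degenerate endpoint $t = t^*$ with a standard mollification, followed by a diagonal extraction.

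First I would observe that every $g \in \mathcal{G}_\Omega$ admits an absolutely continuous representative on $[0, t^* - \delta]$ for every $\delta > 0$: on such an interval $\mu$ is bounded below by $\mu(t^*-\delta)>0$, so condition (2) of Definition \ref{dfnG} gives $g' \in L^2([0, t^* - \delta], dt)$ in the usual sense. Cauchy--Schwarz, together with $\int_0^{t^*} \mu\, dt \le a^* t^*$ and $\int_0^{t^*} \gamma_\Omega\, dt = a^*$, ensures moreover that $g \in L^1([0,t^*], \gamma_\Omega\, dt)$ and $g' \in L^1([0,t^*], \mu\, dt)$.

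Next, I would introduce a family of cutoffs adapted to $\mu$. Fix $\phi \in C^\infty(\mathbb{R};[0,1])$ with $\phi(s) = 0$ for $s \leq 1$ and $\phi(s) = 1$ for $s \geq 2$, and set $\chi_\delta(t) := \phi(\mu(t)/\delta)$. Each $\chi_\delta$ is absolutely continuous on $[0, t^*]$ and vanishes on $\{\mu \leq \delta\}$, which contains a neighborhood of $t^*$. The truncated function $g^\delta := \chi_\delta g$ converges to $g$ in $L^1(\gamma_\Omega\, dt)$ by dominated convergence, and $(g^\delta)' = \chi_\delta g' + \chi_\delta' g$. The first piece converges to $g'$ in $L^1(\mu\, dt)$ by dominated convergence. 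For the second, the identity $\mu' = -\gamma_\Omega$ from Corollary \ref{corMu} yields $\chi_\delta'(t) = -\phi'(\mu(t)/\delta)\gamma_\Omega(t)/\delta$, supported on $\{\delta \leq \mu \leq 2\delta\}$, so that
\[
\int_0^{t^*} |\chi_\delta'(t) g(t)|\, \mu(t)\, dt \leq 2\|\phi'\|_\infty \int_{\{\delta \leq \mu \leq 2\delta\}} |g(t)|\, \gamma_\Omega(t)\, dt,
\]
which tends to $0$ as $\delta \to 0$ since $|g|\gamma_\Omega \in L^1$.

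Each $g^\delta$ is then absolutely continuous on $[0, t^*]$, vanishes on a neighborhood of $t^*$, and has $(g^\delta)' \in L^2([0, t^*], dt)$ because $\mu$ is bounded below on the support of $\chi_\delta$. Extending $g^\delta$ across $t = 0$ by reflection (so as to preserve the $H^1$-regularity across the left boundary) and convolving with a standard mollifier $\rho_{1/n}$ produces $g^{\delta,n} \in C^\infty([0,t^*])$ with $g^{\delta,n} \to g^\delta$ in $W^{1,2}([0,t^*])$; in particular $g^{\delta,n} \to g^\delta$ uniformly (by the one-dimensional Sobolev embedding) and $(g^{\delta,n})' \to (g^\delta)'$ in $L^2$. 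Combined with $\int_0^{t^*} \gamma_\Omega\,dt<\infty$ and $\mu \in L^\infty$, this yields convergence in both weighted $L^1$ norms. A diagonal argument $g_n := g^{\delta_n, n}$ with $\delta_n \to 0$ slowly enough then gives the desired sequence in $C^1([0, t^*])$.

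The main obstacle is the control of $\chi_\delta' g$ in $L^1(\mu\,dt)$ when $g$ may be unbounded near $t^*$. The resolution is the weight-adapted cutoff $\chi_\delta = \phi(\mu/\delta)$: the identity $\mu' = -\gamma_\Omega$ effectively trades the weight $\mu$ for $\gamma_\Omega$ in the problematic term, in which $g$ has precisely the integrability from condition (1) of Definition \ref{dfnG} needed to conclude.
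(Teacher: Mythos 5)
Your approach is correct in outline but takes a genuinely different and more elaborate route than the paper. The paper's proof is a one-step construction: since $g'\in L^1(]0,t^*[,\mu\,dt)$ and this is a finite measure, pick continuous compactly supported $h_n\to g'$ in $L^1(\mu\,dt)$, define $g_n(t):=g(0)+\int_0^t h_n$, and deduce $g_n\to g$ in $L^1(\gamma_\Omega\,dt)$ by Fubini, using the identity $\mu(s)=\int_s^{t^*}\gamma_\Omega\,dt$. That Fubini computation is exactly the dual of your cutoff trick: both hinge on $\mu'=-\gamma_\Omega$, trading one weight for the other, but the paper needs no truncation or mollification at all. Your weight-adapted cutoff $\chi_\delta=\phi(\mu/\delta)$ is a nice idea and is what makes the truncation error $\chi_\delta'g$ controllable in $L^1(\mu\,dt)$.

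There is, however, a gap in the mollification step. You assert that $(g^\delta)'\in L^2([0,t^*],dt)$ ``because $\mu$ is bounded below on the support of $\chi_\delta$''. That argument only handles the term $\chi_\delta g'$. The other term is
\[
\chi_\delta'(t)\,g(t)=-\frac{1}{\delta}\,\phi'\!\bigl(\mu(t)/\delta\bigr)\,\gamma_\Omega(t)\,g(t),
\]
and $\gamma_\Omega$ is only known to be in $L^1$; it need not be in $L^2$ (nor even locally bounded on compact subsets of $]0,t^*[$, since it can blow up at interior critical values of $\psi$). So $(g^\delta)'\in L^2$ can fail, and with it the stated $W^{1,2}$ convergence under mollification. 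The fix is cheap: $(g^\delta)'$ does lie in $L^1([0,t^*],dt)$ (both pieces are $L^1$: the first because $\mu\ge\delta$ on $\operatorname{supp}\chi_\delta$, the second because $g$ is bounded there and $\gamma_\Omega\in L^1$), hence $g^\delta\in W^{1,1}([0,t^*])$. In one dimension, $W^{1,1}$ convergence of the mollifications already yields uniform convergence of $g^{\delta,n}$ to $g^\delta$, which is what you need to control the $L^1(\gamma_\Omega\,dt)$ norm, and $L^1(dt)$ convergence of $(g^{\delta,n})'$ suffices for the $L^1(\mu\,dt)$ norm since $\mu$ is bounded. With $L^2$ replaced by $L^1$/$W^{1,1}$ throughout, your argument closes.
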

\begin{proof} Let us first note that, since $\mu$ in bounded away from $0$ in every compact subset of $[0,t^*[$, $g'$ is in $L^1_{\rm loc}([0,t^*[\,,dt)$, and therefore, for all $t\in [0,t^*[$, 
\[g(t)=g(0)+\int_{0}^{t} g'(s)\,ds.\]
Since $g'\in L^2(]0,t^*[,\mu(t)\,dt)$, there exists a sequence $(h_n)$ of functions which are continuous and compactly supported in $]0,t^*[$, such that $h_n\to g'$ in $L^2(]0,t^*[,\mu(t)\,dt)$. We then define
\[g_n(t):=g(0)+\int_{0}^{t}h_n(s)\,ds\]
for all $n\ge 1$ and $t\in[0,t^*]$. We note that $g_n\in C^1([0,t^*])$. Point 1 is satisfied by construction. Let us now use Fubini's theorem:
\begin{align*} 
	\int_0^{t^*}|g_n(t)-g(t)|^2\gamma_\Omega(t)\,dt&= \int_0^{t^*}\left|\int_0^t (h_n(s)-g'(s))\,ds\right|^2\gamma_\Omega(t)\,dt\\
	&\le t^*\int_0^{t^*}\left(\int_0^t |h_n(s)-g'(s)|^2\,ds\right)\gamma_\Omega(t)\,dt\\
	&= t^* \int_0^{t^*}|h_n(s)-g'(s)|^2\,\left(\int_s^{t^*} \gamma_\Omega(t)\,dt\right)\,ds\\
	&=t^*\int_0^{t^*}|h_n(s)-g'(s)|^2\mu(s)\,ds.
\end{align*}

Since the last term tends to $0$ as $n\to\infty$, we obtain point 2.
\end{proof}
We now fix a sequence $(g_n)$ as in the Lemma. From our initial remark, we have 
\begin{equation} \label{eqDistDern}
	-\int_{\Omega} g_n(\psi(p))\nabla\varphi(p)\,dp= \int_{\Omega} g_n'(\psi(p))\varphi(p)\nabla\psi(p)\,dp
\end{equation}
for all $n$. By starting from points 1 and 2 in the Lemma and applying the Coarea Formula (with computations similar to the proof of Proposition \ref{propH1}) we obtain that, respectively, $g_n\circ\psi$  and $(g_n'\circ\psi)\nabla\psi$ tend to $g\circ\psi$ and $(g'\circ\psi)\nabla\psi$ in $L^2(\Omega)$, and therefore in $L^1(\Omega)$. We then deduce Equation \eqref{eqDistDer} by passing to the limit in Equation \eqref{eqDistDern}.

Finally, let us prove Formula \eqref{eqPrime}. If $f'\in L^1_{\rm loc}(]0,a^*])$, we have, for all $a\in ]0,a^*]$,
\[f(a)=f(a^*)-\int_a^{a^*}f'(\alpha)\,d\alpha\] 
and therefore, from Proposition \ref{propChangeVar}, 
\[g(t)=g(0)-\int_0^tf'(\mu(s))\gamma_{\Omega}(s)\,ds,\]
where $t=\mu^{-1}(a)$. It follows that $g'$ is locally integrable in $[0,t^*[$ and $g'(t)=-f'(\mu(t))\,\gamma_{\Omega}(t)$. 

Conversely, if $g'\in L^1_{\rm loc}([0,t^*[)$, we have, for all $t\in[0,t^*[$, 
\[g(t)=g(0)+\int_0^tg'(t)\,dt,\] 
and therefore, from Proposition \ref{propChangeVar},
\[f(a)=f(a^*)+\int_a^{a^*}\frac{g'(\mu^{-1}(\alpha))}{G(\alpha)}\,d\alpha,\]
where $a=\mu(t)$. It follows that $f'$ is locally integrable in $]0,a^*]$ and $f'(a)=-g'(\mu^{-1}(a))/G(a)$.

\section{Proofs for the Sturm-Liouville problem}
\label{appSL}

Our goal is to prove Propositions \ref{propMinSL} and \ref{propSL}. We follow the well-known method of defining the eigenvalue problem for a self-adjoint operator using a quadratic form. We also prove the properties of the functions $X$ and $Y$ used in the proof of Proposition \ref{propMonotonicity}, more specifically the endpoint behavior stated in Proposition \ref{propSystem}.

Let the function $G:[0,a^*]\to [0,\infty]$ satisfy Assumptions \ref{assumG} and \ref{assumGBounded}. We first prove Lemma \ref{lemFBounded}, which characterizes the associated function space $\mathcal F_G$ (see Definition \ref{dfnG}). Let us recall the statement.

\begin{lem} The set $\mathcal F_G$ is equal to the set $\mathcal F_{4\pi}$ consisting of functions $f:[0,a^*]\to \mathbb C$ satisfying  
\begin{enumerate}
	\item $f\in L^2(]0,a^*[, da)$;
	\item $f'\in L^2(]0,a^*[, 4\pi\,a\,da)$.
\end{enumerate}
In addition, $\mathcal F_{4\pi}$, equipped with the natural norm 
\begin{equation*}
	\|f\|_{\mathcal F_{4\pi}}^2:=\int_0^{a^*}|f(a)|^2\,da+4\pi\int_0^{a^*}|f'(a)|^2a\,da,
\end{equation*} 
is compactly embedded in $L^2(]0,a^*[,da)$.
\end{lem}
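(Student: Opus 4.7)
The lemma has two distinct claims, which I would handle separately: the identification $\mathcal F_G = \mathcal F_{4\pi}$ as sets, and the compact embedding of $\mathcal F_{4\pi}$ into $L^2(]0,a^*[,da)$.

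The first claim is essentially measure-theoretic. Assumption \ref{assumGBounded} provides an upper bound $G(a) \le M$ almost everywhere, and combined with the lower bound in Assumption \ref{assumG} this gives $4\pi\, a \le a\,G(a) \le M\,a$ almost everywhere. Hence the measures $a\,G(a)\,da$ and $4\pi\, a\,da$ are equivalent, so the weighted $L^2$-spaces they define coincide. The condition $f' \in L^2(]0,a^*[, a\,G(a)\,da)$ appearing in Definition~\ref{dfnF} is therefore equivalent to $f' \in L^2(]0,a^*[, 4\pi\,a\,da)$, yielding $\mathcal F_G = \mathcal F_{4\pi}$ with equivalent norms.

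For the compact embedding, my plan is to identify $\mathcal F_{4\pi}$ isometrically with a space of radial Sobolev functions on a two-dimensional disk. Setting $R := \sqrt{a^*/\pi}$ so that $|B_R| = a^*$, I associate to each $f : [0,a^*] \to \mathbb C$ the function $u$ on $B_R$ defined by $u(x) := f(\pi|x|^2)$. A direct computation in polar coordinates, using the substitution $a = \pi r^2$, yields the identities
\[\int_{B_R} |u(x)|^2\,dx = \int_0^{a^*} |f(a)|^2\,da \quad\text{and}\quad \int_{B_R} |\nabla u(x)|^2\,dx = 4\pi\int_0^{a^*} a\,|f'(a)|^2\,da.\]
Thus the correspondence $f \mapsto u$ is an isometric isomorphism between $(\mathcal F_{4\pi}, \|\cdot\|_{\mathcal F_{4\pi}})$ and the closed subspace $H^1_{\mathrm{rad}}(B_R) \subset H^1(B_R)$ of radial functions, and it also identifies $L^2(]0,a^*[,da)$ with $L^2_{\mathrm{rad}}(B_R) \subset L^2(B_R)$. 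The compact embedding $\mathcal F_{4\pi} \hookrightarrow L^2(]0,a^*[,da)$ then follows by transport from the restriction of the Rellich-Kondrachov compact embedding $H^1(B_R) \hookrightarrow L^2(B_R)$ to the closed subspace $H^1_{\mathrm{rad}}(B_R)$.

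The main technical point, which I expect to be the principal obstacle, is to verify rigorously that $f \in \mathcal F_{4\pi}$ implies $u \in H^1(B_R)$, with distributional gradient $\nabla u(x) = 2\pi x\, f'(\pi|x|^2)$. This is a weak chain rule for the composition of $f$ (only in a weighted Sobolev space, possibly unbounded near $a=0$) with the smooth map $x \mapsto \pi|x|^2$, in the same spirit as the results of Appendix~\ref{appChainRules}. I would handle it by approximating $f$ by smooth functions (for which the classical chain rule applies) in the $\mathcal F_{4\pi}$-norm, using the displayed identities to ensure simultaneous $L^2(B_R)$-convergence of both the approximating functions and their gradients, and then passing to the limit in the distributional definition of the gradient.
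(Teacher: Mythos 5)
Your proposal is correct and follows essentially the same route as the paper: the set equality $\mathcal F_G=\mathcal F_{4\pi}$ via the two-sided bound $4\pi\le G\le\|G\|_{L^\infty}$, and the compact embedding via the substitution $u(p)=f(\pi|p|^2)$, the polar-coordinate isometries onto $H^1(B_R)$ and $L^2(B_R)$, Rellich's theorem, and an approximation argument (in the spirit of Appendix~\ref{appChainRules}) to justify the distributional gradient $\nabla u(p)=2\pi f'(\pi|p|^2)\,p$. The only cosmetic difference is that you phrase the conclusion through the closed subspace $H^1_{\mathrm{rad}}(B_R)$, whereas the paper simply embeds into $H^1(B_R)$ and extracts a Cauchy subsequence in $L^2$; this changes nothing of substance.
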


\begin{proof} From Assumptions \ref{assumG} and \ref{assumGBounded}, we have 
\[4\pi\le G(a)\le \|G\|_{L^\infty}\]
for almost every $a$ in $[0,a^*]$, which immediately implies that the sets $\mathcal F_G$ and $\mathcal F_{4\pi}$ are equal.

Among several possible approaches to prove the compact embedding, we use one which is closely related to the method of level lines. We define $R$ by $\pi R^2=a^*$, so that $B_R$ is the disk of area $a^*$. We map any function $f\in\mathcal F_{4\pi}$ to a function $u$ on $B_R$, defined by
\begin{equation}
	u(p)=f(\pi|p|^2)
\end{equation}
for $p\in B_R$.

Integrating in polar coordinates over $B_R$ and using Fubini's theorem, we find
\begin{equation*}
	\int_{B_R}|u(p)|^2\,dp=\int_0^{2\pi}\left(\int_{0}^R|f(\pi r^2)|^2r\,dr\right)\,d\theta=2\pi\int_{0}^R|f(\pi r^2)|^2r\,dr.
\end{equation*}
The change of variable $a=\pi r^2$ in the last integral gives
\begin{equation*}
	2\pi\int_{0}^R|f(\pi r^2)|^2r\,dr=\int_0^{a^*}|f(a)|^2\,da.
\end{equation*}

In addition, a computation similar to the proof of Proposition  \ref{propH1} in Appendix \ref{appChainRules} shows that the gradient of $u$, in the sense of distribution, is
\begin{equation*}
	\nabla u(p)= 2\pi f'(\pi|p|^2)p.
\end{equation*}
Integrating as before, we find
\begin{equation*}
	\int_{B_R}|\nabla u(p)|^2\,dp=8\pi^3\int_{0}^R|f'(\pi r^2)|^2r^3\,dr=4\pi\int_0^{a^*}|f'(a)|^2a\,da.
\end{equation*}
From this and the previous paragraph, we conclude that the mapping $f\mapsto u$ defined above sends $\mathcal F_{4\pi}$ into $H^1(B_R)$, isometrically if $\mathcal F_{4\pi}$ is equipped with the natural norm $\|\cdot\|_{\mathcal F_{4\pi}}$, and that it also separately preserves the $L^2$-norm. 

The conclusion is now easy to reach. Let us assume that $(f_n)$ is a bounded sequence in $(\mathcal F_{4\pi},\|\cdot\|_{\mathcal F_{4\pi}})$. It is mapped to a bounded sequence $(u_n)$ in $H^1(B_R)$. By Rellich's theorem, $H^1(B_R)$ is compactly embedded in $L^2(B_R)$, and therefore $(u_{n})$ admits a subsequence $(u_{n_k})$ which converges in $L^2(B_R)$. By the isometric mapping, the corresponding subsequence $(f_{n_k})$ is Cauchy, and therefore convergent, in $L^2(]0,a^*[,da)$.
\end{proof}

\begin{rem} Let us note that $\pi|p|^2$ is the area enclosed by the circle centered at $0$ passing through the point $p$ in $B_R$. Since the level lines of the torsion function in $B_R$ are circles, we have $\pi|p|^2=\mu(\psi(p))$ for all $p\in B_R$. Here we are using the notation of Section \ref{secArea} in the case $\Omega=B_R$ and $\psi(p)=\frac14(R^2-|p|^2)$. The mapping $f(a) \mapsto u(p)$ is therefore the one defined in Corollary \ref{corH1}.
\end{rem}  

We now consider the quadratic form defined, for $f\in\mathcal F_{4\pi}$, by
\begin{equation*}
	q(f)=\int_0^{a^*}\left(P(a)|f'(a)|^2+\beta^2 Q(a)| f(a)|^2\right)\,da.
\end{equation*}
Since $|P(a)|\le a\,\|G\|_{L^\infty}$ and $|Q(a)|\le \frac{a}{4\pi}$  for almost every $a$, the form $q$ is continuous in $(\mathcal F_{4\pi},\|\cdot\|_{\mathcal F_{4\pi}})$. Since $Q$ is non-negative, $q$ is non-negative and in particular bounded from below. Standard results from spectral theory then tell us that there exists a sequence $(\kappa_k,f_k)_{k\ge1}$ of spectral pairs satisfying the following properties (where $\langle\cdot,\cdot\rangle$ denotes the scalar product in $L^2(]0,a^*[,da)$ and $q(\cdot,\cdot)$ the sesquilinear form associated with the quadratic form $q(\cdot)$).
\begin{enumerate}
	\item For all $k$, $f_k\in\mathcal F_{4\pi}$ and $f_k$ is an eigenfunction of $q$ associated with the  eigenvalue $\kappa_k$, that is to say
	\begin{equation}
		\label{eqWeak}
		q(f_k,\varphi)=\kappa_k\langle f_k,\varphi\rangle
	\end{equation}
	for all $\varphi\in\mathcal F_{4\pi}$.
	\item The sequence $(f_k)$ is a Hilbert basis of $L^2(]0,a^*[,da)$.
	\item The sequence $(\kappa_k)$ tends to $+\infty$.
\end{enumerate}

Setting $\kappa_k(G,\beta):=\kappa_k$, we see that we have proved point 1 in Proposition \ref{propSL}.

In addition, as a consequence of the variational characterization of eigenvalues, we have 
\begin{equation}
	\label{eqVariational}
	\kappa_1=\min_{f\in\mathcal F_{4\pi}\setminus\{0\}}\frac{q(f)}{\|f\|_{L^2}^2}.
\end{equation}
This proves Proposition \ref{propMinSL}.

The fact that $q$  is associated to a Sturm-Liouville operator implies further properties of the eigenvalues and eigenfunctions. To see this, let us fix $k$ and let us consider the weak eigenvalue equation \eqref{eqWeak}. Since $\mathcal F_{4\pi}$ contains, in particular, all smooth functions with compact support in $]0,a^*[$, Equation \eqref{eqWeak} implies that 
\begin{equation}
\label{eqEV}
	-\left(P(a)f_k'\right)'+\beta^2Q(a)f_k=\kappa_ k f_k
\end{equation}
in the sense of distributions in $]0,a^*[$. Introducing the quasi-derivative $f_k^{[1]}(a):=P(a)f_k'(a)$ and rearranging the terms, we obtain
\begin{equation*}
		(f_k^{[1]})'(a)=\left(\beta^2Q(a)-\kappa_k \right)f_k(a)
\end{equation*}
in the sense of distribution. The left-hand side is in $L^2$, and thus in $L^1$, meaning that $f_k^{[1]}$ is absolutely continuous in $[0,a^*]$. Since 
\begin{equation*}
	f_k'(a)=\frac1{P(a)}f_k^{[1]}(a)
\end{equation*}
and
\begin{equation*}
			0\le\frac1{P(a)}\le\frac1{4\pi\,a},
\end{equation*}
it follows that $f_k$ is locally absolutely continuous in $]0,a^*]$. We have thus proved point 3 of Proposition \ref{propSL}.

Let us now consider a function $\varphi$ in $C^1([0,a^*])$ . According to Equation \eqref{eqEV},  we have, 
\begin{equation*}
	\kappa_k\int_0^{a^*}f_k(a)\varphi(a)\,da=\int_0 \left(-(f_k^{[1]}(a))'+\beta^2Q(a)f_k(a)\right)\varphi(a)\,da.
\end{equation*}
Applying the integration by part formula for absolutely continuous functions, we obtain
\begin{equation*}
	\kappa_k\int_0^{a^*}f_k(a)\varphi(a)\,da=[f_k^{[1]}(a)\varphi(a)]_0^{a^*}+\int_0^{a^*}\left(f_k^{[1]}(a)\varphi'(a)+\beta^2Q(a)f_k(a)\varphi(a)\right)\,da.
\end{equation*} 
Since, according to Equation \eqref{eqWeak},
\begin{equation*}
	\int_0^{a^*}\left(f_k^{[1]}(a)\varphi'(a)+\beta^2Q(a)f_k(a)\varphi(a)\right)\,da=\kappa_k\int_0^{a*}f_k(a)\varphi(a)\,da,
\end{equation*}
we have shown that 
\begin{equation*}
f_k^{[1]}(a^*)\varphi(a^*)-f_k^{[1]}(0)\varphi(0)=0
\end{equation*}
for all $\varphi\in C^1([0,a^*])$. By applying this successively with $\varphi_1$ such that $\varphi_1(0)=0$ and $\varphi_1(a^*)=1$ and with $\varphi_2$ such that $\varphi_2(0)=1$ and $\varphi_2(a^*)=0$, we obtain 
\begin{equation}\label{eqBC}
	f_k^{[1]}(a^*)=f_k^{[1]}(0)=0.
\end{equation}
We have therefore proved the boundary conditions in Problem \eqref{eqSL}. 

Re-using the notation of Section \ref{secSL}, we set
\begin{align*}
	X(a):=&f_k(a),\\
	Y(a):=&f_k^{[1]}(a).
\end{align*}
Then, the vector-valued function $a\mapsto (X(a),Y(a))$ satisfies the system \eqref{eqSystemODE} with $\kappa=\kappa_n$. Let us note that the function $a\mapsto \beta^2Q(a)-\kappa_n$ is in $L^\infty(]0,a^*[)$ and that the function $a\mapsto 1/P(a)=(a\,G(a))^{-1}$ is in $L^{\infty}_{\rm loc}(]0,a^*])$. The system \eqref{eqSystemODE} therefore satisfies  in $]0,a^*]$ the existence and uniqueness property for the solution to a Cauchy problem. As a consequence, as in the Sturm-Liouville theory with regular coefficients, we have the following results.
\begin{enumerate}
	\item Any eigenfunction associated with $\kappa_k$ is proportional to $f_k$, that is, the eigenvalue $\kappa_k$ is simple.
	\item $f_k(a^*)\neq 0$.
	\item  If $f_k$ vanishes at $a_0\in]0,a^*[$, then it changes sign. In particular, every interior zero of $f_k$ is isolated.  
\end{enumerate}
Let us note that the first point above is point 2 in Proposition \ref{propSL}. 

Let us now prove that $f_1$ does not change sign . Since $f_1$  is absolutely continuous, so is $|f_1|$, with $||f_1|'(a)|=|f_1'(a)|$ almost everywhere. We therefore have
\begin{equation*}
	\frac{q\left(|f_1|\right)}{\|\,|f_1|\,\|_{L^2}^2}=\frac{q(f_1)}{\|f_1\|_{L^2}^2}.
\end{equation*}
From the variational characterization of eigenvalues, it follows that $|f_1|$ is an eigenfunction associated with $\kappa_1$. Therefore, $f_1$ is proportional to $|f_1|$ and does not change sign in $]0,a^*[$. According to point 3 above, this implies that $f_1$ does not vanish in $]0,a^*[$.   From now on, we chose $f_1$ strictly positive in $]0,a^*[$. Let us note that at this stage, we have proved the endpoint conditions $Y(0)=Y(a^*)=0$ and $X(a^*)>0$ stated in Proposition \ref{propSystem}.

The existence and uniqueness theorem may not apply at $a=0$, since $a\mapsto 1/P(a)$ may not be integrable there. However, when $n=1$, we can prove that $X=f_1$ has a finite limit as $a\to 0$ (denoted by $X(0)$) and that $X(0)>0$, as stated in Proposition \ref{propSystem}.  Indeed, $X(a)>0$ and, as $a\to 0$, 
\begin{equation*}
	\beta^2Q(a)-\kappa_1=-\kappa_1+O(a),
\end{equation*}
so that $\beta^2Q(a)-\kappa_1<0$ for $a$ close to $0$. Using the second equation in System \eqref{eqSystemODE}, it follows that $Y'(a)<0$ for $a$ close to $0$. The function $a\mapsto X(a)$ is therefore decreasing in a neighborhood of $0$, and thus, as $a\to 0$,
\begin{equation*}
	X(a)\to \ell,
\end{equation*}
with $\ell > 0$ finite or infinite. Therefore $\ell>0$. According to the first differential equation in System \eqref{eqSystemODE}, for almost every $a$,
\begin{equation*}
	|X'(a)|\le \frac{\|Y\|_{\infty}}{4\pi\, a},
\end{equation*}
 and it follow by integration that there exists positive constants $C_1$ and $C_2$ such that 
\begin{equation*}
	|X(a)|\le C_1|\log(a)|+C_2.
\end{equation*}
According to the second differential equation,
\begin{equation}
\label{eqBoundY}
	Y(a)=\int_0^a (Q(\alpha)-\kappa)X(\alpha)\,d\alpha= \int_{0}^a\left(\frac{\alpha}{G(\alpha)}-\kappa\right)X(\alpha)\,d\alpha
\end{equation}
and therefore, for $a$ in some interval $]0,\varepsilon[$,
\begin{equation*}
	|Y(a)|	\le \int_{0}^a\left(\frac{\alpha}{G(\alpha)}+\kappa\right)\left(C_1|\log(\alpha)|+C_2\right)\,d\alpha\le C_3\,a\,|\log(a)|,
\end{equation*}
with $C_3$ a positive constant. Hence, for almost every $a\in ]0,\varepsilon[$,
\begin{equation*}
	|X'(a)|\le \frac{C_3}{4\pi}|\log(a)|.
\end{equation*}
It follows by integration that $X$ is bounded, and therefore that $\ell$ is finite, as required. Using again the second differential equation in system \eqref{eqSystemODE}, we find that  $Y(a)=O\left(a\right)$ as $a\to 0$. This completes the proof of Proposition \ref{propSystem}.

\bibliographystyle{plain}

\end{document}